\newtheorem{theorem}{Theorem}[section]
\newtheorem{lemma}[theorem]{Lemma}
\newtheorem{proposition}{Proposition}
\theoremstyle{definition}
\newtheorem{definition}[theorem]{Definition}
\newtheorem{remark}{Remark}
\newtheorem*{notation}{Notation}
\title[Plasma-vacuum interface] 
      {Stability of the linearized MHD-Maxwell free interface problem}
\author[D. Catania, M. D'Abbicco and P. Secchi]{}
\subjclass{Primary: 76W05; Secondary: 35Q35, 35L50, 76E17, 76E25, 35R35, 76B03.}
 \keywords{Ideal compressible magneto-hydrodynamics, Maxwell equations, plasma-vacuum interface, characteristic free boundary, nonuniformly characteristic boundary.}
 \email{davide.catania@unibs.it}
 \email{marcello.dabbicco@unibs.it}
 \email{paolo.secchi@unibs.it}
\newtheorem{hypothesis}[theorem]{Hypothesis}
\newcommand{\R}{{\mathbb R}}
\def\duno{\partial_1}
\def\dt{\partial_t}
\def\G{\mathcal G }
\def\H{\mathcal H }
\def\ds{\displaystyle}
\newcommand{\Hc}{{\H}} 
\newcommand{\p}{\partial}
\newcommand{\hg}{{\mathfrak{h}}}
\newcommand{\Hg}{{\mathfrak{H}}} 
\newcommand{\Ec}{{\mathcal{E}}} 
\newcommand{\eg}{{\mathfrak{e}}}
\newcommand{\Eg}{{\mathfrak{E}}} 
\newcommand{\Hs}{{\mathsf{H}}} 
\newcommand{\Es}{{\mathsf{E}}} 
\newcommand{\Wc}{{\mathcal{W}}} 
\newcommand{\Wg}{{\mathfrak{W}}} 
\newcommand{\wg}{{\mathfrak{w}}} 
\newcommand{\Ws}{{\mathsf{W}}}
\newcommand{\Uc}{\mathcal U}
\renewcommand{\G}{{\mathfrak{G}}}
\DeclareMathOperator{\dv}{{div}}
\DeclareMathOperator{\rot}{{\nabla \times\, }}
\newcommand{\tm}{^\intercal}
\renewcommand*{\doteq }{:=} 
\newcommand{\mep}{\varepsilon}
\begin{document}
\maketitle

\centerline{\scshape Davide Catania, Marcello D'Abbicco and Paolo Secchi}
\medskip
{\footnotesize
 \centerline{DICATAM, Mathematical Division, University of Brescia }
   \centerline{Via Valotti, 9, 25133 Brescia, Italy}
} 

%

\bigskip


\begin{abstract}
We consider the free boundary problem for the plasma-vacuum interface in ideal compressible magnetohydrodynamics (MHD). In the plasma region, the flow is governed by the usual compressible MHD equations, while in the vacuum region we consider the Maxwell system for the electric and the magnetic fields, in order to investigate  the well-posedness of the problem, in particular in relation with the electric field in vacuum. At the free interface, driven by the plasma velocity, the total pressure is continuous and the magnetic field on both sides is tangent to the boundary.

Under suitable stability conditions satisfied at each point of the plasma-vacuum interface, we derive a basic a priori estimate  for solutions to the  linearized problem in the Sobolev space $H^1_{\tan}$ with conormal regularity. The proof follows by a suitable secondary symmetrization of the Maxwell equations in vacuum and the energy method.

An interesting novelty is represented by the fact that the interface is characteristic with variable multiplicity, so that the problem requires a different number of boundary conditions, depending on the direction of the front velocity (plasma expansion into vacuum or viceversa). To overcome this difficulty, we recast the vacuum equations in terms of a new variable which makes the interface characteristic of constant multiplicity. In particular, we don't assume that plasma expands into vacuum.
\end{abstract}

\section{Introduction}
\label{sect1}

Plasma-vacuum interface problems appear in the mathematical modeling of plasma confinement by magnetic fields in thermonuclear energy production (as in Tokamaks; see, e.g., \cite{Goed}). In this model, the plasma is confined inside a perfectly conducting rigid wall and isolated from it by a region containing very low density plasma, which may qualify as vacuum, due to the effect of strong magnetic fields.
This subject is very popular since the 1950--70's, but most of theoretical studies are devoted to finding stability criteria of equilibrium states. The typical work in this direction is the classical paper of Bernstein et al. \cite{BFKK}.  In astrophysics, the plasma-vacuum interface problem can be used for modeling the motion of a star or the solar corona when magnetic fields are taken into account. Once again, the interface can be described as a tangential discontinuity, i.e. the magnetic fields do not intersect the interface.

In~\cite{SeTr,SeTrNl}, the authors studied the free boundary problem for the plasma-vacuum interface in ideal compressible magnetohydrodynamics (MHD), by considering the {\it pre-Maxwell dynamics} for the magnetic field in the vacuum region, as usually assumed in the classical formulation. The relativistic case has been addressed by Trakhinin in \cite{trakhinin12}, in the case of plasma expansion in vacuum. In this paper we consider again the non-relativistic case, but, instead of the pre-Maxwell dynamics, in the vacuum region we don't neglect the displacement current and consider the complete  system of {\it Maxwell equations} for the electric and the magnetic fields.

{The introduction of this model aims at investigating the influence of the electric field in vacuum on the well-posedness of the problem, as in the classical pre-Maxwell dynamics such an influence is hidden.
For the relativistic plasma-vacuum problem, Trakhinin \cite{trakhinin12} has shown the possible ill-posedness in the presence of a sufficiently strong vacuum electric field. Since relativistic effects play a rather passive role in the analysis of \cite{trakhinin12}, it is natural to expect the same for the non-relativistic problem under consideration here.
On the contrary, we will show that a {\it sufficiently weak} vacuum electric field, under the stability condition \eqref{41}, precludes ill-posedness and gives the well-posedness of the linearized problem (see Theorem~\ref{main}), thus somehow justifying the practice of neglecting the displacement current in the classical pre-Maxwell formulation when the vacuum electric field is weak enough. }

 {We discuss different equivalent formulations of the problem and study the stability} of the linearized problem with variable coefficients for nonplanar plasma-vacuum interfaces.
The main result is given in Theorem \ref{main}, Section \ref{mainresult}, where
we derive a basic a priori estimate  for solutions to the  linearized problem in the Sobolev space $H^1_{\tan}$ with conormal regularity, under suitable stability conditions satisfied at each point of the plasma-vacuum interface.
{The proof follows by a suitable secondary symmetrization of the Maxwell equations in vacuum and the energy method. The approach is similar to that in \cite{SeTr,trakhinin12}. }

{Observe that
one important difficulty of the plasma-vacuum problem, as for other free-boundary problems in MHD, is that we cannot test the Kreiss--Lopatinski condition \cite{kreiss, benzoni-serre, nappes} analytically. On the other hand, since the number of dimensionless parameters for the constant coefficients linearized problem is big, a complete numerical test of the Kreiss--Lopatinski condition seems unrealizable in practice. }

{The free interface, which is an unknown of the problem, turns out to be nonuniformly characteristic, that is characteristic of variable multiplicity, so that a different number of boundary conditions, depending on the direction of the front velocity (plasma expansion into vacuum or viceversa), are needed in different parts of the boundary. To overcome this huge obstacle, we introduce the new vacuum variable $\Ws$, defined in \eqref{defss}, which makes the interface characteristic of constant multiplicity, with no need of further assumptions such as plasma expansion (as in \cite{trakhinin12,mandrik-trakhinin}). This can be considered the main novelty of the paper.}

\medskip

First we consider the equations of ideal compressible MHD in the plasma region:
\begin{equation}
\left\{
\begin{array}{l}
\partial_t\rho  +{\rm div}\, (\rho {v} )=0\, ,\\[3pt]
\partial_t(\rho {v} ) +{\rm div}\,(\rho{v}\otimes{v} -{H}\otimes{H} ) +
{\nabla}q=0\, , \\[3pt]
\partial_t{H} -{\nabla}\times ({v} {\times}{H})=0\, ,\\[3pt]
\partial_t\bigl( \rho e +\frac{1}{2}|{H}|^2\bigr)+
{\rm div}\, \bigl((\rho e +p){v} +{H}{\times}({v}{\times}{H})\bigr)=0\, ,
\end{array}
\right.
\label{MHD}
\end{equation}
where $\rho$ denotes density, $v\in\mathbb{R}^3$ plasma velocity, $H \in\mathbb{R}^3$ magnetic field, $p=p(\rho,S )$ pressure, $q =p+\frac{1}{2}|{H} |^2$ total pressure, $S$ entropy, $e=E+\frac{1}{2}|{v}|^2$ total energy, and  $E=E(\rho,S )$ internal energy. With a state equation of gas, $\rho=\rho(p ,S)$, and the first principle of thermodynamics, \eqref{MHD} is a closed system.

System \eqref{MHD} is supplemented by the divergence constraint
\begin{equation}
{\rm div}\, {H} =0
\label{2}
\end{equation}
on the initial data. As it is known, taking into account \eqref{2}, we can easily symmetrize system \eqref{MHD} by rewriting it in the nonconservative form
\begin{equation}
\left\{
\begin{array}{l}
{\displaystyle\frac{\rho_p}{\rho}}\,{\displaystyle\frac{{\rm d} p}{{\rm d}t} +{\rm div}\,{v} =0}\, ,\qquad
\rho\, {\displaystyle\frac{{\rm d}v}{{\rm d}t}-({H}\cdot\nabla ){H}+{\nabla}
q  =0 }\, ,\\[9pt]
{\displaystyle\frac{{\rm d}{H}}{{\rm d}t} - ({H} \cdot\nabla ){v} +
{H}\,{\rm div}\,{v}=0}\, ,\qquad
{\displaystyle\frac{{\rm d} S}{{\rm d} t} =0}\, ,
\end{array}\right. \label{MHDnon}
\end{equation}
where $\rho_p\equiv\partial\rho/\partial p$ and ${\rm d} /{\rm d} t =\partial_t+({v} \cdot{\nabla} )$.
A different symmetrization is obtained if we consider $q$ instead of $p$.
In terms of $q $, the equation for the pressure in \eqref{MHDnon} takes
the form
\begin{equation}
\begin{array}{ll}\label{equq}
\displaystyle\frac{\rho_p}{\rho }\left\{\frac{{\rm d} q}{{\rm d}t} -H
\cdot\displaystyle\frac{{\rm d} H}{{\rm d}t} \right\}+{\rm div}\,{v}=0\, ,
\end{array}
\end{equation}
where it is understood that now
$\rho =\rho(q  -|H  |^2/2,S)$ and similarly for
$\rho_p $.
Then we derive ${\rm div}\,{v} $ from \eqref{equq} and rewrite the equation
for the magnetic field in \eqref{MHDnon} as
\begin{equation}
\begin{array}{ll}\label{equH}
\displaystyle\frac{{\rm d} H}{{\rm d}t} -(H \cdot\nabla)v  -
 \frac{\rho_p}{\rho }H\left\{\frac{{\rm d} q}{{\rm d}t} -H
\cdot\frac{{\rm d} H}{{\rm d}t} \right\}=0.
\end{array}
\end{equation}
Substituting \eqref{equq}, \eqref{equH} in \eqref{MHDnon} then gives the following symmetric system
\begin{equation}
\begin{array}{ll}\label{3'}
\left(\begin{matrix}
{\rho_p/\rho}&\underline
0&-({\rho_p/\rho})H &0 \\
\underline 0^T&\rho
I_3&0_3&\underline 0^T\\
-({\rho_p/\rho})H^T&0_3&I_3+({\rho_p/\rho})H\otimes H&\underline 0^T\\
0&\underline 0&\underline 0&1
\end{matrix}\right)\dt
\left(\begin{matrix}
q \\ v \\
H\\S \end{matrix}\right)+\\
 \\
 +
\left( \begin{matrix}
(\rho_p/\rho)
v \cdot\nabla&\nabla\cdot&-({\rho_p/\rho})Hv \cdot\nabla&0\\
\nabla&\rho v \cdot\nabla I_3&-H \cdot\nabla I_3&\underline 0^T\\
-({\rho_p/\rho})H^T v \cdot\nabla&-H \cdot\nabla I_3&(I_3+({\rho_p/\rho})H\otimes H)
v \cdot\nabla&\underline 0^T\\
0&\underline 0&\underline 0&v \cdot\nabla
\end{matrix}\right)
\left(\begin{matrix}q \\ v \\ H\\S \end{matrix}\right)
=0\,,
\end{array}
\end{equation}
where $\underline 0=(0,0,0)$.
Given this symmetrization, as the unknown we can choose the vector $ U =U (t, x )=(q, v,H, S)$. For the sake of brevity we write system (\ref{3'}) in the form
\begin{equation}
\label{4}
A_0(U )\partial_tU+\sum_{j=1}^3A_j(U )\partial_jU=0\, ,
\end{equation}
which is symmetric hyperbolic provided the hyperbolicity condition $A_0>0$ holds, i.e.
\begin{equation}
\rho  >0\, ,\qquad \rho_p >0\, . \label{5}
\end{equation}
We use $\p_j$, $j=1,2,3$, to denote the partial derivative with respect to~$x_j$; in the sequel we will use $\p_0=\p_t$ to denote the partial derivative with respect to~$t$ (see Appendix~\ref{App:Notation}).

Let $\Omega^+(t)$ and $\Omega ^-(t)$ be space-time domains occupied by the plasma and the vacuum respectively. That is, in the domain
$\Omega^+(t)$ we consider system \eqref{4} governing the motion of an ideal plasma and in the domain $\Omega^-(t)$  we have the Maxwell system
\begin{equation}\label{eq:Maxwell}
\begin{cases}
\mep\p_t \Hc + \rot \Ec = 0  \,, \\
\mep\p_t \Ec - \rot \Hc = 0  \,,
\end{cases}
\end{equation}
describing the vacuum magnetic and electric fields $\Hc,\Ec\in\mathbb{R}^3$. {Here, the equations are written in nondimensional form through a suitable scaling (see Mandrik--Trakhinin~\cite{mandrik-trakhinin}), and $\mep=\frac{\bar{v}}{c}$, where $\bar{v}$ is the velocity of a uniform flow and $c$ is the speed of light in vacuum. If we choose $\bar{v}$ to be the speed of sound in vacuum, we have that $\mep$ is a small, even though fixed parameter.}

System \eqref{eq:Maxwell} is supplemented by the divergence constraints
\begin{equation*}
 \dv \Hc = \dv \Ec = 0
\label{}
\end{equation*}
on the initial data.
We write system~\eqref{eq:Maxwell} in the form:
\begin{equation}\label{eq:MaxwellB}
\mep\p_t \Wc + \sum_{j=1}^3 B_j\p_j \Wc = 0\,,
\end{equation}
where~$\Wc=(\Hc,\Ec)\tm$, and
\begin{gather} B_j = \begin{pmatrix}
O_3 & B'_j \\
{B'_j}\tm & O_3
\end{pmatrix} ,\ j=1,2,3, \\
B'_1=\begin{pmatrix}
0 & 0 & 0 \\
0 & 0 & -1 \\
0 & 1 & 0
\end{pmatrix}, \
B'_2=\begin{pmatrix}
0 & 0 & 1 \\
0 & 0 & 0 \\
-1 & 0 & 0
\end{pmatrix}, \
B'_3=\begin{pmatrix}
0 & -1 & 0 \\
1 & 0 & 0 \\
0 & 0 & 0
\end{pmatrix}. \label{eq:Bmatrix}
\end{gather}

Let us assume that the interface between plasma and vacuum is given by a hypersurface $\Gamma (t)=\{F(t,x)=0\}$. It has to be determined and moves with the velocity of plasma particles at the boundary:
\begin{equation}
\frac{{\rm d}F }{{\rm d} t}=0\quad \mbox{on}\ \Gamma (t)\label{7}
\end{equation}
(for all $t\in [0,T]$). Since $F$ is an unknown of the problem, this is a free-boundary problem. The plasma variable $U$ is connected with the vacuum magnetic and electric fields
$\mathcal{H}, \Ec$  through the relations \cite{BFKK,Goed}
\begin{equation}
 [q]=0,\quad  H\cdot N=0, \quad \Hc\cdot N=0 ,\quad  N \times \Ec = \mep (v \cdot N)  \Hc\quad \mbox{on}\ \Gamma (t),
\label{8}
\end{equation}
where $N=\nabla F$ and $[q]= q|_{\Gamma}-\frac{1}{2}|\mathcal{H}|^2_{|\Gamma}+\frac12|\Ec|^2_{|\Gamma}$ denotes the jump of the total pressure across the interface.
These relations together with \eqref{7} are the boundary conditions at the interface $\Gamma (t)$. Let us note that, in particular, the magnetic fields on both sides are tangent to the free interface.

As in \cite{lindblad2,trakhinin09, SeTr, SeTrNl}, we will assume that for problem \eqref{4}, \eqref{eq:MaxwellB}--\eqref{8} the hyperbolicity conditions \eqref{5} hold true in $\Omega^+(t)$ up to the boundary $\Gamma(t)$, i.e., the plasma density does not go to zero continuously, but has a jump (clearly in the vacuum region $\Omega^-(t)$ the density is identically zero). This assumption is compatible with the continuity of the total pressure in \eqref{8}.
For instance, in the case of ideal polytropic gases one has $p=A\rho^\gamma e^S$ with $A>0,\gamma>1$. Then the continuity of the total pressure at $\Gamma$ requires $(A\rho^\gamma e^S+\frac{1}{2}|{H}|^2){_{|\Gamma^+}=(\frac{1}{2}|\mathcal{H}|^2-\frac{1}{2}|\Ec|^2)_{|\Gamma^-}}$, which may be obtained also for densities $\rho$ discontinuous across $\Gamma$. Differently, in the absence of the magnetic field, the continuity of the pressure yields the continuity of the density so that $\rho_{|\Gamma_+}=0$.

Since the interface moves with the velocity of plasma particles at the boundary, by introducing the Lagrangian coordinates one can reduce the original problem to that in a fixed domain. This approach has been recently employed with success in a series of papers on the Euler equations in vacuum, see \cite{coutandshkoller3,coutandlindbladshkoller,coutandshkoller1,coutandshkoller2,lindblad2}. However, as for tangential discontinuities in various models of fluid dynamics (e.g. \cite{catcvs,cmst,morandotrakhinintrebeschi,mttpd,mttpv,trakhinin09arma}), this approach seems hardly applicable to problem \eqref{4}, \eqref{eq:MaxwellB}--\eqref{8}. Therefore, we will work in the Eulerian coordinates and for technical simplicity we will assume that the space-time domains $\Omega^\pm (t)$ have the following form.


Let us assume that the moving interface $\Gamma(t)$ takes the form $$
\Gamma(t) \doteq  \{ (x_1,x') \in \R^3 \, , \, x_1=\varphi(t,x')\} \, ,
$$
where $t \in [0,T]$ and $x'=(x_2,x_3)$. Then we have $\Omega^\pm(t)=\{x_1\gtrless \varphi(t,x')\}$.
With our parametrization of $\Gamma (t)$, an equivalent formulation
of the boundary conditions \eqref{7}, \eqref{8} at the interface is
\begin{equation}\label{eq:HN}
H_N=\mathcal{H}_N=0 \quad \mbox{on} \quad \Gamma(t)\, ,
\end{equation}
\begin{gather}
\label{eq:15-1}
\partial_t\varphi =v_N\, ,\quad [q]=0\, ,\\
\Ec_2+{\Ec_1\partial_2\varphi-\mep\Hc_3\partial_t \varphi}=0\, , \quad \Ec_3+{\Ec_1\partial_3\varphi+\mep\Hc_2\partial_t\varphi}=0 \quad \mbox{on}\quad \Gamma (t)\, ,
\label{15}
\end{gather}
where $v_N=v\cdot N$, $H_N=H\cdot N$, $\mathcal{H}_N=\mathcal{H}\cdot N$, $N=(1,-\partial_2\varphi ,-\partial_3\varphi )$. In particular, \eqref{15} consists of the third and second components of $N\times\Ec = {\mep\Hc\partial_t\varphi} $, whereas the first component of $N\times\Ec = {\mep\Hc\partial_t\varphi} $ follows as a direct consequence of~\eqref{15} and $\Hc_N|_{\Gamma(t)}=0$.

\medskip

The analysis of the problem shows that the number of boundary conditions that should be imposed to \eqref{4}, \eqref{eq:MaxwellB} varies with the sign of the front velocity $\partial_t\varphi$, see more details in Appendix \ref{calcolosegni} and \cite{trakhinin12}.
This number should correspond to the number of incoming characteristics for both \eqref{4} and \eqref{eq:MaxwellB}, plus one for the determination of the front. The correct number of boundary conditions is four if $\partial_t\varphi<0$ (the plasma expands into vacuum). Thus the four boundary conditions \eqref{eq:15-1}, \eqref{15} are in agreement with this necessary condition for well-posedness. (It will be shown that the conditions in \eqref{eq:HN} may be considered just as a restriction on the initial data.) On the contrary, if $\partial_t\varphi>0$ (the vacuum expands into plasma) the correct number is six and problem \eqref{4}, \eqref{eq:MaxwellB}, \eqref{eq:15-1}, \eqref{15} is formally underdetermined because it is missing two boundary conditions. In view of that, let us denote
$$
\Gamma_\pm(t) \doteq  \{ (x_1,x') \in \R^3 \, , \, x_1=\varphi(t,x'),\, \partial_t\varphi(t,x')\gtrless0\} \, ,
$$
and supplement the system with the boundary conditions
\begin{equation}\label{eq:div}
 \dv \Hc = \dv \Ec = 0 \quad \mbox{on} \ \Gamma^+(t)
 \end{equation}
for all $t\in [0,T]$. These additional boundary conditions, {which are chosen according to \cite{trakhinin12},} are unnecessary on $\Gamma^-(t) $.

System \eqref{4}, \eqref{eq:Maxwell}, \eqref{eq:HN}--\eqref{eq:div} is supplemented with initial conditions
\begin{equation}
\begin{array}{ll}
{U} (0,{x})={U}_0({x})\, ,\quad {x}\in \Omega^{+} (0)\, ,\qquad
\varphi(0,{x})=\varphi_0({x})\qquad {x}\in\Gamma \, ,\\
\mathcal{H}(0,x)= \mathcal{H}_0(x)\, ,\quad \mathcal{E}(0,x)= \mathcal{E}_0(x) \qquad  {x}\in \Omega^{-}(0)\, .\label{11}
\end{array}
\end{equation}
{Due to the particular boundary conditions, problem \eqref{4}, \eqref{eq:MaxwellB}, \eqref{eq:HN}--\eqref{11} is an initial boundary value problem with characteristic boundary. Moreover, since we prescribe a different number of conditions on different portions of the boundary (see \eqref{eq:div}), the boundary is nonuniformly characteristic, that is characteristic with variable multiplicity. A satisfactory theory for such nonuniformly characteristic boundary value problems is still lacking, see \cite{nishitani00,secchinonunif1,secchinonunif2} and references therein included.}


From the mathematical point of view, a natural wish is to find conditions on the initial data
providing the existence and uniqueness on some time interval $[0,T]$ of a solution $({U},\Hc,\Ec,\varphi)$ to problem \eqref{4}, \eqref{eq:MaxwellB}, \eqref{eq:HN}--\eqref{11} in Sobolev spaces. Since \eqref{4} is a system of nonlinear hyperbolic conservation laws that can produce shock waves and other types of strong discontinuities
(e.g., current-vortex sheets \cite{chenwang,trakhinin09arma}), it is natural to expect to obtain only local-in-time existence theorems.
Morever, it is natural to expect in general a local-in time existence for problem \eqref{4}, \eqref{eq:MaxwellB}, \eqref{eq:HN}--\eqref{11}, even under suitable stability conditions for well-posedness, because of the occurrence of instabilities coming from the interface in finite time.

We must regard the boundary conditions on $H, \Hc$ in \eqref{eq:HN} as the restriction on the initial data \eqref{11}. Similarly for $\dv H=\dv \Hc=\dv\Ec=0$.
\\
More precisely, we can prove that a solution of~\eqref{4}, \eqref{eq:15-1} (if it exists for all $t\in [0,T]$) satisfies
\[
 \dv{H} =0 \quad \mbox{in}\ \Omega^+ (t)\quad \mbox{and}\quad  H_N=0\quad \mbox{on}\ \Gamma (t),
\]
for all $t\in [0,T]$, if the latter were satisfied at $t=0$, i.e., for the initial data \eqref{11}. Similarly, we can prove that a solution of \eqref{eq:Maxwell}, \eqref{15}, \eqref{eq:div} satisfies
\[ \dv \Hc = \dv \Ec = 0 \quad \mbox{in} \ \Omega^-(t) , \quad \Hc_N=0 \quad {\mbox{on}\ \Gamma(t)},
\]
for all $t\in [0,T]$, if the latter were satisfied at $t=0$, i.e., for the initial data \eqref{11}.

The content of the paper is as follows: the rest of this introduction is devoted to an equivalent formulation of the free boundary problem \eqref{4}, \eqref{eq:MaxwellB}, \eqref{eq:15-1}--\eqref{11} on the fixed
domain with flat boundary. In Section \ref{sec:linearization} we introduce the linearized problem with some useful reductions and equivalent formulations. After introducing some function spaces in Section \ref{fs}, we present our main result in Section \ref{mainresult}, Theorem \ref{main}. The proof of the main result is given in Section \ref{basic}. {Appendix~\ref{App:Notation} collects the main notations, hoping that it may help the reader, Appendix~\ref{computations} contains the proofs of some technical lemmas, and in Appendix~\ref{calcolosegni} we discuss the number of boundary conditions.}

\subsection{An equivalent formulation in the fixed domain} \label{subsecfixed}

Let us denote
$$
\Omega^\pm \doteq  \R^3 \cap \{x_1\gtrless 0 \} \, ,\qquad \Gamma \doteq \R^3\cap\{x_1=0\} \, .
$$
We want to reduce the free boundary problem \eqref{4}, \eqref{eq:MaxwellB}, \eqref{eq:15-1}--\eqref{11} to the fixed
domains $\Omega^\pm$. For this purpose we introduce a suitable change of coordinates that is inspired by Lannes \cite{lannes} (see also \cite{cmst,SeTr}). In all what follows, $H^s(\omega)$ denotes the Sobolev space of order $s$ on a domain $\omega$.

\begin{notation}\label{not:derivatives}
We use the notation~$\Psi_j$ to denote~$\p_j \Psi$, being~$\Psi$ a scalar function, whereas by $\Hc_j, \Ec_j$ we denote the~$j$-th component of $\Hc, \Ec$, being them vector functions (see Appendix~\ref{App:Notation}).
\end{notation}

The diffeomorphism that reduces the free boundary problem \eqref{4}, \eqref{eq:MaxwellB}, \eqref{eq:15-1}--\eqref{11} to the fixed domains $\Omega^\pm$ is given in the following lemma.
\begin{lemma}[Lemma 3 in~\cite{SeTr}]
\label{lemma3}
Let $m \ge3$ be an integer. For any $T>0$, and
for any
\[ \varphi \in \cap_{j=0}^{m-1} {\mathcal C}^j([0,T];H^{m-j-\frac12}(\R^2))\,,\]
satisfying without loss of generality $\| \varphi\|_{{\mathcal C}([0,T];H^{2}(\R^2))} \le 1$, there exists a function \[\Psi \in \cap_{j=0}^{m-1} {\mathcal C}^j([0,T];H^{m-j}(\R^3))\]
such that the function
\begin{equation}
\label{change}
\Phi(t,x) \doteq  \big( x_1 +\Psi(t,x),x' \big) \, , \qquad (t,x) \in [0,T]\times \R^3 \, ,
\end{equation}
defines an $H^m$-diffeomorphism of $\R^3$ for all $t \in [0,T]$.
Moreover, there holds \[\partial^j_t (\Phi - Id) \in {\mathcal C}([0,T];H^{m-j}(\R^3))\] for $j=0,\dots, m-1$,
$\Phi(t,0,x')=(\varphi(t,x'),x')$, $\duno \Phi(t,0,x')=(1,0,0)$, as well as
\begin{gather}
\label{eq:Phitcontrol}
\|\p_t^j \Phi(t,\cdot)\|_{L^\infty(\R^3)} \leq \frac1{\sqrt{2\pi}}\,
\|\p_t^j \varphi(t,\cdot)\|_{H^{\frac32}(\R^2)} \qquad t\in[0,T] \,,   \\
\label{eq:Phiest}
\| \Psi_1(t,\cdot)\|_{L^\infty(\R^3)} \leq \frac{1}{2} \qquad t\in[0,T]\,,
\end{gather}
for $j=1,\ldots,m-{2}$.
\end{lemma}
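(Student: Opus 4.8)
\emph{Proof plan.} The plan is to build $\Psi$ as a \emph{regularizing lifting} of the trace $\varphi$ by a tangential Fourier multiplier, in the spirit of Lannes~\cite{lannes}. Fix once and for all an even function $\chi\in\mathcal S(\R)$ with $\chi(0)=1$, $\chi'(0)=0$ and $|\chi|\le1$ on $\R$, and, denoting by $\xi'$ the variable dual to $x'$ and $\langle\xi'\rangle=(1+|\xi'|^2)^{1/2}$, set
\[
\Psi(t,x_1,x')=\frac1{2\pi}\int_{\R^2}e^{ix'\cdot\xi'}\,\chi\big(x_1\langle\xi'\rangle\big)\,\widehat\varphi(t,\xi')\,d\xi'.
\]
Since $\chi(0)=1$ we get $\Psi(t,0,x')=\varphi(t,x')$, hence $\Phi(t,0,x')=(\varphi(t,x'),x')$ in \eqref{change}; and as $\p_1$ produces the factor $\langle\xi'\rangle\chi'(x_1\langle\xi'\rangle)$, which vanishes at $x_1=0$ because $\chi'(0)=0$, we obtain $\p_1\Psi(t,0,x')=0$, that is $\duno\Phi(t,0,x')=(1,0,0)$. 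As $t$ enters only through $\varphi$, the lifting commutes with $\p_t^j$, so $\p_t^j\Psi$ is the lifting of $\p_t^j\varphi$.

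For the regularity, the crucial feature is that this lifting gains half a tangential derivative in the normal direction. Taking the full Fourier transform in $(x_1,x')$ one finds $\widehat\Psi(\xi_1,\xi')=c\,\langle\xi'\rangle^{-1}\,\widehat\chi(\xi_1/\langle\xi'\rangle)\,\widehat\varphi(\xi')$; inserting the weight $(1+|\xi|^2)^{s}$, substituting $\xi_1=\langle\xi'\rangle\eta$ and using the identity $1+\xi_1^2+|\xi'|^2=\langle\xi'\rangle^2(1+\eta^2)$ gives exactly
\[
\|\Psi(t,\cdot)\|_{H^s(\R^3)}^2=\Big(\int_{\R}(1+\eta^2)^s|\widehat\chi(\eta)|^2\,d\eta\Big)\,\|\varphi(t,\cdot)\|_{H^{s-1/2}(\R^2)}^2,
\]
the $\eta$-integral being finite because $\chi\in\mathcal S$. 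Applying this to $\p_t^j\varphi$ with $s=m-j$ shows $\p_t^j\Psi(t,\cdot)\in H^{m-j}(\R^3)$, so $\Psi\in\cap_{j=0}^{m-1}\mathcal C^j([0,T];H^{m-j}(\R^3))$ and $\p_t^j(\Phi-\Id)\in\mathcal C([0,T];H^{m-j}(\R^3))$; continuity in $t$ is inherited from that of $\p_t^j\varphi$ via boundedness of the lifting operator.

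It remains to establish the two pointwise bounds and the diffeomorphism property. For \eqref{eq:Phitcontrol}, $|\chi|\le1$ and Cauchy--Schwarz give $|\p_t^j\Psi(t,x)|\le\frac1{2\pi}\int_{\R^2}|\widehat{\p_t^j\varphi}|\,d\xi'\le\frac1{2\pi}(\int_{\R^2}\langle\xi'\rangle^{-3}d\xi')^{1/2}\|\p_t^j\varphi\|_{H^{3/2}}$, and since $\int_{\R^2}\langle\xi'\rangle^{-3}d\xi'=2\pi$ this is precisely $\frac1{\sqrt{2\pi}}\|\p_t^j\varphi\|_{H^{3/2}}$ (recall $\p_t^j\Phi=(\p_t^j\Psi,0,0)$ for $j\ge1$). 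The main obstacle is the sharp constant in \eqref{eq:Phiest}: writing $\p_1\Psi$ in Fourier and applying Cauchy--Schwarz against the $H^2$ norm yields
\[
\|\p_1\Psi(t,\cdot)\|_{L^\infty}\le\frac1{\sqrt{2\pi}}\Big(\int_0^\infty\frac{|\chi'(u)|^2}{u}\,du\Big)^{1/2}\|\varphi(t,\cdot)\|_{H^2},
\]
where the $u$-integral converges thanks to $\chi'(0)=0$ and Schwartz decay; choosing $\chi$ spread out enough that this integral is $\le\pi/2$, the normalization $\|\varphi\|_{\mathcal C([0,T];H^2)}\le1$ gives $\|\p_1\Psi\|_{L^\infty}\le\frac12$, i.e.\ \eqref{eq:Phiest}. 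Consequently $\duno\Phi=1+\p_1\Psi\ge\frac12>0$, so for each fixed $(t,x')$ the map $x_1\mapsto x_1+\Psi(t,x_1,x')$ is strictly increasing and, since $\Psi$ is bounded ($m\ge3$), proper, hence a bijection of $\R$; together with the Sobolev regularity above and the inverse function theorem (Jacobian bounded below by $1/2$) this shows $\Phi(t,\cdot)$ is an $H^m$-diffeomorphism of $\R^3$ for every $t\in[0,T]$.
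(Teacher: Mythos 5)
Your proof is correct and takes essentially the same route as the paper: the identical Lannes-type lifting $\Psi=\chi(x_1\langle D\rangle)\varphi$, with \eqref{eq:Phitcontrol} obtained by the same Plancherel/Cauchy--Schwarz computation (including the sharp constant $1/\sqrt{2\pi}$ coming from $\int_{\R^2}\langle\xi'\rangle^{-3}d\xi'=2\pi$), the main difference being that the paper cites Lemmas 1--3 of \cite{SeTr} for the trace, regularity and diffeomorphism properties, while you verify them directly via the exact identity $\|\Psi(t,\cdot)\|_{H^s(\R^3)}=C_s\|\varphi(t,\cdot)\|_{H^{s-1/2}(\R^2)}$. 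The only substantive deviation is the cutoff class --- you take $\chi$ Schwartz, even, $\chi(0)=1$, dilated so that $\int_0^\infty |\chi'(u)|^2u^{-1}\,du\le \pi/2$ (a sound step, since replacing $\chi$ by $\chi(\cdot/\lambda)$ scales this integral by $\lambda^{-2}$), whereas the paper takes $\chi\in C^\infty_0(\R)$ with $\chi\equiv1$ on $[-1,1]$; both choices prove the lemma as stated, but the paper's compactly supported $\chi$ additionally makes $\Psi$ vanish identically for $|x_1|$ large, a side property invoked in the remark following Hypothesis~\ref{hyp:invertibility0}, which under your choice survives only in the weaker form of decay at infinity.
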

In particular, from~\eqref{eq:Phiest} we derive $1+\Psi_1(t,x) \geq 1/2$. For the proof of Lemma~\ref{lemma3}, see Appendix~\ref{computations}.

\medskip

We introduce the change of independent variables defined by \eqref{change} by setting
\begin{gather*}
\tilde{U}(t,x )\doteq  {U}(t,\Phi (t,x))\, , \quad \tilde{\Hc}(t,x )\doteq  \Hc(t,\Phi (t,x))\, ,\\
\tilde{\Ec}(t,x)\doteq  \Ec(t,\Phi(t,x))\, , \quad \tilde{\Wc}(t,x )\doteq  \Wc(t,\Phi (t,x)) \, ,
\end{gather*}
and define the new dependent variables
\begin{align*}
\hg & =(\tilde\Hc_1-\Psi_2\tilde\Hc_2 -\Psi_3\tilde\Hc_3,(1+\Psi_1)\tilde\Hc_2,(1+\Psi_1)\tilde\Hc_3)\tm, \\
\eg & =(\tilde\Ec_1-\Psi_2\tilde\Ec_2-\Psi_3\tilde\Ec_3,(1+\Psi_1)\Ec_2,(1+\Psi_1)\Ec_3)\tm,\\
\Hs & =((1+\Psi_1)\tilde\Hc_1,\tilde\Hc_2+\Psi_2\tilde\Hc_1+\mep\Psi_t\tilde\Ec_3,
\tilde\Hc_3+\Psi_3\tilde\Hc_1-\mep\Psi_t\tilde\Ec_2)\tm,\\
\Es & =((1+\Psi_1)\tilde\Ec_1,\tilde\Ec_2+\Psi_2\tilde\Ec_1-\mep\Psi_t\tilde\Hc_3,\tilde\Ec_3
+\Psi_3\tilde\Ec_1+\mep\Psi_t\tilde\Hc_2)\tm. \\
\end{align*}
Moreover, we set
\begin{align}\label{defss}
\wg = (\hg, \eg)\tm \, , \qquad \Ws = (\Hs, \Es)\tm\, .
\end{align}
Notice that

Let us define the matrix
\[ \eta \doteq  \begin{pmatrix}
1 & - \Psi_2 & - \Psi_3 \\
0 & 1+\Psi_1 & 0 \\
0 & 0 & 1+\Psi_1
\end{pmatrix} , \]
which is invertible by virtue of the smallness of $\Psi_1$ (see Lemma~\ref{lemma3}). Then:
\[ \hg=\eta\,\tilde\Hc\, , \qquad \eg=\eta\,\tilde\Ec\, ,\qquad \wg = K\tilde{\Wc}\, , \]
where
\begin{equation}\label{eq:K}
K = \begin{pmatrix}
\eta & 0 \\
0 & \eta
\end{pmatrix} .
\end{equation}
We notice that $\Ws = J \,\tilde\Wc$, where
\begin{align}\label{eq:J} J &\doteq  \begin{pmatrix}
(1+\Psi_1)(\eta\tm)^{-1} & -\mep\Psi_t B_1'  \\
\mep\Psi_t B_1' & (1+\Psi_1)(\eta\tm)^{-1}
\end{pmatrix} \\
&\equiv \begin{pmatrix}
1+\Psi_1 & 0 & 0 & 0 & 0 & 0 \\
\Psi_2 & 1 & 0 & 0 & 0 & \mep\Psi_t \\
\Psi_3 & 0 & 1 & 0 & -\mep\Psi_t & 0 \\
0 & 0 & 0 & 1+\Psi_1 & 0 & 0  \\
0 & 0 & -\mep\Psi_t & \Psi_2 & 1 & 0  \\
0 & \mep\Psi_t & 0 & \Psi_3 & 0 & 1
\end{pmatrix} \,,
\end{align}
with~$B_1'$ as in~\eqref{eq:Bmatrix}. The matrix~$J$ is not invertible, in general, since
\[ \det J = (1+\Psi_1)^2 (1-{\mep^2}\Psi_t^2)^2\,, \]
and~$1-{\mep^2}\Psi_t^2$ might vanish. {However, this is prevented if $\mep$ is sufficiently small, which is true for physical problems. Otherwise, this difficulty can be overcome} by assuming the smallness of the normal velocity of the plasma at the boundary.

{Notice that the relation $\Ws=J\tilde \Wc$ corresponds to the non-relativistic version of the Joule--Bernoulli equation connecting the magnetic and the electric fields in two inertial frames moving at relative velocity equal to the interface speed when the interface curvature is neglected.}

\begin{hypothesis}\label{hyp:invertibility0}
We assume that~$\mep v_N$ is sufficiently small on~$\Gamma$, namely
\begin{equation}\label{eq:smallvn0}
\frac1{\sqrt{2\pi}}\,\|\mep v_N(t,0,\cdot)\|_{H^{\frac32}(\R^2)}< 1\,.
\end{equation}
\end{hypothesis}
By virtue of~\eqref{eq:15-1}, \eqref{eq:Phitcontrol}, {from Hypothesis~\ref{hyp:invertibility0}} it follows that $\sup |\mep\Psi_t(t,x)|<1$ in $[0,T]\times\R^3$, and~$J$ becomes invertible. Thus we can recover ${\tilde\Wc}$ from $\Ws$, by ${\tilde\Wc}=J^{-1}\Ws$. We remark that~$\Psi_t(t,x_1,x')=0$ for~$|x_1|$ sufficiently large, namely $x_1\not\in \mathrm{supp}\, \chi$ (see Appendix \ref{computations} for the definition of $\chi$), hence it is not possible to derive the invertibility of~$J$ by assuming $|\mep\Psi_t(t,x)|>1$ for all~$(t,x)\in[0,T]\times\R^3$.
\medskip

For the reader's convenience, we define
\begin{equation}\label{eq:J1}
J_1 \doteq  (1+\Psi_1)(1-{\mep^2}\Psi_t^2)J^{-1} \,,
\end{equation}
whose explicit form is given in Appendix~\ref{App:Notation}.

In~\cite{SeTr,SeTrNl}, assuming \emph{pre-Maxwell} dynamics in vacuum, i.e. $\rot \Hc=0,\, \dv\Hc=0$ in~$\Omega^-(t)$, the authors obtained the equation~$\rot \Hg=0$ in the fixed domain, with the variable~$\Hg$ given by~$\Hg\doteq (1+\Psi_1)(\eta\tm)^{-1}\tilde\Hc$. They also made use of the variable~$\Eg\doteq (1+\Psi_1)(\eta\tm)^{-1}\tilde\Ec$. Therefore,
\[ \tilde\Wc = \frac1{1+\Psi_1}\,K\tm \Wg\,, \qquad \Wg=(\Hg,\Eg)\tm \,. \]
Having this in mind, one may also write
\[ \Ws = \Wg + \mep \Psi_t \begin{pmatrix}
0 & - B_1'  \\
B_1' & 0
\end{pmatrix} \tilde\Wc \equiv \Wg + \mep\Psi_t \left( 0 ,\tilde\Ec_3 ,-\tilde\Ec_2,0 ,-\tilde\Hc_3 ,\tilde\Hc_2\right)\tm \,,\]
as well as
\[ \Ws=L\Wg\,, \qquad L\doteq  \frac1{1+\Psi_1}\,J\,K\tm \,. \]
The variable~$\Wg$ is also used in~\cite{trakhinin12}.

\medskip

{By resorting to the variable $\tilde\Wc$, the equations \eqref{eq:MaxwellB} in $\Omega^-(t)$ can be recast on the fixed domain $\Omega^-$ as
\begin{align} \label{sys.Wtilde}
\mep\p_t \tilde\Wc + \frac1{1+\Psi_1}\,\Bigl( B_1 - \mep\Psi_t I-\sum_{j=2,3} \Psi_j\,B_j \Bigr)\p_1 \tilde\Wc + \sum_{j=2,3} B_j \p_j \tilde\Wc = 0\, .
\end{align}}

The new set of dependent variables $\wg, \Ws$ defined in \eqref{defss} is convenient for the reformulation of equations on the fixed domain~$\Omega^-$.
\begin{proposition}\label{prop:Maxwell}
System \eqref{eq:Maxwell} (or \eqref{eq:MaxwellB}) in $\Omega^-(t)$, {or \eqref{sys.Wtilde} in $\Omega^-$,} is equivalent to
\begin{equation} \label{eq:Maxwell1}\begin{cases}
\mep\p_t \hg + \rot \Es + \mep Q\dv\hg= 0\,,\\
\mep\p_t \eg - \rot \Hs + \mep Q\dv\eg= 0\end{cases} \end{equation}
in the fixed domain~$\Omega^-$, where
\begin{equation*}
Q=- \frac{\Psi_t}{1+\Psi_1}\begin{pmatrix} 1 \\ 0 \\ 0 \end{pmatrix}
\end{equation*}
Equations~$\dv\Hc=0$ and $\dv\Ec=0$ in $\Omega^-(t)$ are equivalent to
\begin{equation} \label{eq:Maxwell2}\begin{cases}
\dv \hg = 0 \,,\\
\dv \eg = 0
\end{cases} \end{equation}
in the fixed domain~$\Omega^-$.
\end{proposition}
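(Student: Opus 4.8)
The plan is to reduce the statement to the chain rule for the diffeomorphism $\Phi$ combined with two algebraic identities for the transformed $\dv$ and $\rot$ operators. Writing $\tilde g:=g\circ\Phi$ for a field on $\Omega^-(t)$ and letting
\[ A:=\begin{pmatrix} 1+\Psi_1 & \Psi_2 & \Psi_3 \\ 0 & 1 & 0 \\ 0 & 0 & 1 \end{pmatrix} \]
be the spatial Jacobian $(\p\Phi_i/\p x_j)$ of $\Phi$, the chain rule reads $\nabla\tilde g=A\tm\big((\nabla g)\circ\Phi\big)$, equivalently $(\nabla g)\circ\Phi=(A\tm)^{-1}\nabla\tilde g$, together with $(\p_t g)\circ\Phi=\p_t\tilde g-\frac{\Psi_t}{1+\Psi_1}\,\p_1\tilde g$. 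First I would record the algebraic structure of the new unknowns: since $\det A=1+\Psi_1$, the matrix $\eta=(1+\Psi_1)A^{-1}=(\det A)A^{-1}$ is the adjugate of $A$, whence $\hg=\eta\tilde\Hc$, $\eg=\eta\tilde\Ec$, while a direct inspection of the definitions gives $\Hs=A\tm\tilde\Hc-\mep\Psi_t B_1'\tilde\Ec$ and $\Es=A\tm\tilde\Ec+\mep\Psi_t B_1'\tilde\Hc$, with $B_1'$ as in \eqref{eq:Bmatrix}.

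The two identities I would establish are the Piola (cofactor) identity $\dv(\eta\tilde g)=(1+\Psi_1)(\dv g)\circ\Phi$ and the curl identity $\rot(A\tm\tilde g)=\eta\,(\rot g)\circ\Phi$; in both the left operator is taken in the fixed variable $x$ and the right one is the physical operator composed with $\Phi$. Each follows from the chain rule once the terms in which a derivative falls on a coefficient $\Psi_j$ cancel by the symmetry $\p_i\p_j\Psi=\p_j\p_i\Psi$. The divergence part \eqref{eq:Maxwell2} is then immediate: $\dv\hg=(1+\Psi_1)(\dv\Hc)\circ\Phi$ and likewise for $\eg$, and since $1+\Psi_1\ge\frac12>0$, the constraints $\dv\Hc=\dv\Ec=0$ in $\Omega^-(t)$ hold if and only if $\dv\hg=\dv\eg=0$ in $\Omega^-$.

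For the evolution system \eqref{eq:Maxwell1} I would treat the first equation in detail; the second is handled in the same way, using the second Maxwell equation and $\Hs=A\tm\tilde\Hc-\mep\Psi_t B_1'\tilde\Ec$. Substituting $\Es=A\tm\tilde\Ec+\mep\Psi_t B_1'\tilde\Hc$ and applying the curl identity gives $\rot\Es=\eta\,(\rot\Ec)\circ\Phi+\mep\,\rot(\Psi_t B_1'\tilde\Hc)$. Replacing $(\rot\Ec)\circ\Phi=-\mep(\p_t\Hc)\circ\Phi$ from \eqref{eq:Maxwell} and expanding $\mep\p_t\hg=\mep(\p_t\eta)\tilde\Hc+\mep\eta\,\p_t\tilde\Hc$, the $\eta\,\p_t\tilde\Hc$ terms cancel and one is left with
\[ \mep\p_t\hg+\rot\Es=\mep(\p_t\eta)\tilde\Hc+\mep\frac{\Psi_t}{1+\Psi_1}\,\eta\,\p_1\tilde\Hc+\mep\,\rot(\Psi_t B_1'\tilde\Hc). \]
A short expansion of the right-hand side shows that its second and third components vanish identically, while the first component collapses, via the chain rule, to $\mep\Psi_t(\dv\Hc)\circ\Phi$. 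This is exactly the quantity absorbed by $\mep Q\dv\hg$, whose only nonzero (first) component equals $-\mep\frac{\Psi_t}{1+\Psi_1}\dv\hg=-\mep\Psi_t(\dv\Hc)\circ\Phi$ by the Piola identity. Hence \eqref{eq:Maxwell1} holds, and since every step is reversible one obtains the equivalence.

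The main obstacle is the bookkeeping of the time-dependent corrections $\pm\mep\Psi_t B_1'$ in $\Hs,\Es$ against the term $(\p_t\eta)\tilde\Hc$ arising from differentiating $\hg=\eta\tilde\Hc$ in $t$. The point to get right is that, once these corrections are included, the change of variables produces a single surviving remainder $\mep\Psi_t(\dv\Hc)\circ\Phi$ in the normal component, and that the coefficient $Q=-\frac{\Psi_t}{1+\Psi_1}(1,0,0)\tm$ is tuned precisely so that $\mep Q\dv\hg$ cancels it without having to impose the divergence constraint. This is what makes \eqref{eq:Maxwell1} an identity genuinely equivalent to the Maxwell system, rather than merely a consequence of it on divergence-free fields.
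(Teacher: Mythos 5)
Your proof is correct; I checked the residual computation at its core: with $A=D_x\Phi$ your spatial Jacobian and $\eta=(1+\Psi_1)A^{-1}$, the vector $(\p_t\eta)\tilde\Hc+\frac{\Psi_t}{1+\Psi_1}\,\eta\,\p_1\tilde\Hc+\rot(\Psi_t B_1'\tilde\Hc)$ indeed has vanishing second and third components (by equality of the mixed partials of $\Psi$), and its first component equals $\Psi_t(\dv\Hc)\circ(t,\Phi)=\frac{\Psi_t}{1+\Psi_1}\,\dv\hg$, which is exactly what $\mep Q\dv\hg$ cancels; the signs in the second equation work out the same way. Your route differs from the paper's in organization rather than in substance. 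The paper's proof (Appendix~\ref{computations}) is purely componentwise: each scalar equation of \eqref{eq:Maxwell} is multiplied by $1+\Psi_1$, the scalar chain rule of Remark~\ref{rem:derivatives} is applied, and terms are regrouped into total derivatives; the tangential components come out directly, while the normal component requires substituting the Maxwell system itself to replace $\nabla\Psi\cdot(\rot\tilde\Ec)$ by $-\mep\,\nabla\Psi\cdot\bigl((\p_t\Hc)\circ(t,\Phi)\bigr)$, which produces the same divergence residual. You instead package the change of variables into two classical structural identities — the Piola identity $\dv(\eta\tilde g)=(1+\Psi_1)(\dv g)\circ(t,\Phi)$ and the curl--pullback identity $\rot(A\tm\tilde g)=\eta\,(\rot g)\circ(t,\Phi)$ — after recognizing $\hg,\eg$ as Piola transforms and the $\Psi$-dependent parts of $\Hs,\Es$ as $1$-form pullbacks corrected by $\pm\mep\Psi_t B_1'$. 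Both arguments invoke the Maxwell equations at the same single point (the normal component), so the two proofs rest on the same mechanism; what your packaging buys is (i) an explanation of why the unknowns $\hg,\eg,\Hs,\Es$ are the natural ones, and (ii) a cleaner equivalence statement: since the equations enter only through a linear substitution with invertible coefficient matrix, your computation amounts to the unconditional identity
\begin{equation*}
\mep\p_t\hg+\rot\Es+\mep Q\dv\hg=\eta\bigl[(\mep\p_t\Hc+\rot\Ec)\circ(t,\Phi)\bigr],
\end{equation*}
so that both directions of Proposition~\ref{prop:Maxwell} follow at once from the invertibility of $\eta$ and the fact that $\Phi$ is a diffeomorphism — a point your write-up makes explicit where the paper leaves the reversibility of its componentwise steps implicit. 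The divergence part \eqref{eq:Maxwell2} is identical in both proofs: your Piola identity is precisely the paper's displayed computation showing $(1+\Psi_1)(\dv\Hc)\circ(t,\Phi)=\dv\hg$.
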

For the proof of Proposition~\ref{prop:Maxwell}, see Appendix~\ref{computations}.
Proposition~\ref{prop:Maxwell} implies that the additional boundary conditions \eqref{eq:div} are equivalent to
\begin{equation*}
 \dv \hg = \dv \eg = 0 \quad \mbox{on} \ \Gamma\cap \{ \partial_t\varphi(t,x')>0\} \, ,
 \end{equation*}
for all $t\in [0,T]$.
\begin{remark}\label{rem:conditions1}
The two boundary conditions in~\eqref{15} can be written as
\begin{align*}
\Es_2 (t,0,x') &= (\tilde\Ec_2+\tilde\varphi_2\tilde\Ec_1-\mep\tilde\varphi_t\tilde\Hc_3)(t,0,x') \\
&= (\Ec_2+\varphi_2\Ec_1-\mep\varphi_t\Hc_3)\circ (t,\varphi(t,x'),x') = 0\,, \\
\Es_3 (t,0,x') &= (\tilde\Ec_3+\tilde\varphi_3\tilde\Ec_1+\mep\tilde\varphi_t\tilde\Hc_2)(t,0,x') \\
&= ({\Ec_3}+\varphi_3\Ec_1+\mep\varphi_t\Hc_2)\circ (t,\varphi(t,x'),x') = 0\, ,
\end{align*}
where we used the trivial property $\tilde\varphi_j = (\varphi_j) \circ (t,\Phi)$, being~$\varphi_1\equiv0$.
\end{remark}


Dropping for convenience tildes in $\tilde{U}$ and $\tilde{\mathcal{W}}$, problem \eqref{4}, \eqref{eq:MaxwellB}, \eqref{eq:15-1}--\eqref{11} can be reformulated on the fixed
reference domains $\Omega^\pm$ as
\begin{equation}
\mathbb{P}(U,\Psi)=0\quad\mbox{in}\ [0,T]\times \Omega^+,\label{16}
\end{equation}
\begin{equation}
{\tilde{\mathbb{V}}}(\Wc,\Psi)=0
\quad\mbox{in}\ [0,T]\times \Omega^-,\label{16'}
\end{equation}
\begin{equation}
\mathbb{B}(U,\Wc,\varphi )=0\quad\mbox{on}\ [0,T] \times\Gamma,\label{17}
\end{equation}
\begin{equation}\label{eq:div2}
 \dv \hg = \dv \eg = 0 \quad \mbox{on} \ [0,T] \times\Gamma\cap \{ \partial_t\varphi(t,x')>0\} \, ,
 \end{equation}
\begin{equation}
(U,\Wc)|_{t=0}=(U_0,\Wc_0)\quad\mbox{in}\ \Omega^+\times\Omega^-,\qquad \varphi|_{t=0}=\varphi_0\quad \mbox{on}\ \Gamma,\label{18}
\end{equation}
where $\mathbb{P}(U,\Psi)=P(U,\Psi)U$,
\[
P(U,\Psi)=A_0(U)\partial_t +\tilde{A}_1(U,\Psi)\partial_1+A_2(U )\partial_2+A_3(U )\partial_3,
\]
\[
\tilde{A}_1(U,\Psi )=\frac{1}{1+\Psi_1}\Bigl(
A_1(U )-A_0(U)\Psi_t -\sum_{j=2}^3 A_j(U)\Psi_j \Bigr),
\]
\[
{\tilde{\mathbb{V}}}(\Wc,\Psi)=\mep\p_t \Wc + \tilde{B}_1 (\Psi) \p_1 \Wc + \sum_{j=2,3} B_j \p_j \Wc
\]
\[ \tilde{B}_1(\Psi) = \frac1{1+\Psi_1}\,\Bigl( B_1 - {\mep}\Psi_t I-\sum_{j=2,3} \Psi_j\,B_j \Bigr) , \]
\[
\mathbb{B}(U,\Wc,\varphi )=\left(
\begin{array}{c}
\partial_t\varphi -v_{N |x_1=0}\\ {[}q{]} \\
\Es_{2|{x_1=0}} \\
\Es_{3|{x_1=0}} \\
\end{array}
\right),\qquad v_{N}=v_1- v_2\Psi_2 - v_3\Psi_3
 ,
\]
\[
[q]=q_{|x_1=0}-\frac{1}{2}|\mathcal{H}|^2_{x_1=0}+\frac{1}{2}|\Ec|^2_{x_1=0}
\, .
\]

To avoid an overload of notation, we have denoted by the same symbol $v_N$ here above and $v_N$ as in \eqref{eq:15-1}. Notice that
$v_{N |x_1=0}=v_1- v_2\varphi_2 - v_3\varphi_3$, as in the previous definition in \eqref{eq:15-1}. Similarly, we will also denote $\Hc_N\doteq \hg_1$, $\Ec_N\doteq \eg_1$. We also define
\[ h \doteq  (H_1-\Psi_2 H_2-\Psi_3 H_3, (1+\Psi_1)H_2, (1+\Psi_1)H_3) \,, \qquad H_N\doteq  h_1\,. \]
Now we show that problem \eqref{16}--\eqref{18} implies the equations
\begin{equation}
\dv h=0\quad\mbox{in}\ [0,T]\times \Omega^+,\label{19}
\end{equation}
\begin{equation}
 \dv \hg =\dv \eg =0\quad\mbox{in}\ [0,T]\times \Omega^-,\label{eq:diveghg}
\end{equation}
and the boundary conditions
\begin{equation}
H_{N}=\Hc_{N}=0\quad\mbox{on}\ [0,T]\times\Gamma,\label{20}
\end{equation}
that can be considered just as restrictions on the initial data \eqref{18}.

\begin{proposition}
Let the initial data \eqref{18} satisfy \eqref{19}--\eqref{20} for $t=0$.
If $(U,\Wc,\varphi )$ is a regular solution  of problem \eqref{16}--\eqref{18}, then this solution satisfies \eqref{19}--\eqref{20} for all $t\in [0,T]$.
\label{p1}
\end{proposition}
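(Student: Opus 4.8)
The plan is to verify the interior divergence constraints \eqref{19}, \eqref{eq:diveghg} and the boundary conditions \eqref{20} separately, and within each to treat the vacuum side (where the equations \eqref{eq:Maxwell1} already live on the fixed domain) apart from the plasma side. In every case the mechanism is the same: I exhibit the constrained quantity as the solution of a \emph{homogeneous} linear transport equation whose characteristics are compatible with the prescribed data, so that a datum vanishing identically at $t=0$ is propagated. I would establish the interior constraints first, since they are needed to obtain the boundary conditions.

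I would start with the vacuum interior. Applying $\dv$ to the first equation of \eqref{eq:Maxwell1}, using $\dv\,\rot\Es=0$ and the explicit form of $Q$, the quantity $\dv\hg$ satisfies the conservation form
\[
\partial_t(\dv\hg)-\partial_1\!\Big(\tfrac{\Psi_t}{1+\Psi_1}\,\dv\hg\Big)=0 ,
\]
and $\dv\eg$ obeys the identical equation. This is a transport equation in $(t,x_1)$ with $x'$ as a parameter, so $\dv\hg,\dv\eg$ are carried from their initial values along the curves $\dot x_1=-\Psi_t/(1+\Psi_1)$. Since $\Psi_1|_{x_1=0}=0$ and $\Psi_t|_{x_1=0}=\partial_t\varphi$, these characteristics enter $\Omega^-=\{x_1<0\}$ precisely on the inflow part $\Gamma^+=\{\partial_t\varphi>0\}$; there the supplementary conditions \eqref{eq:div2} furnish the vanishing inflow datum, whereas on $\Gamma^-$ the characteristics are outgoing and no datum is required. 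This yields \eqref{eq:diveghg}, and it is exactly the place where the extra conditions \eqref{eq:div2}, dictated by the nonuniformly characteristic nature of $\Gamma$, are consumed.

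For the vacuum boundary condition I would take the first component of the first equation in \eqref{eq:Maxwell1} and restrict it to $\Gamma=\{x_1=0\}$. There $(\rot\Es)_1=\partial_2\Es_3-\partial_3\Es_2$ vanishes, because $\Es_2=\Es_3=0$ on all of $\Gamma$ (Remark~\ref{rem:conditions1}, i.e.\ the boundary conditions contained in \eqref{17}), so that their tangential derivatives vanish; moreover $Q_1\dv\hg=0$ once $\dv\hg=0$ has been obtained up to the boundary. What survives is $\partial_t\hg_1=0$ on $\Gamma$, so that $\Hc_N=\hg_1|_{x_1=0}$ is time-independent and hence vanishes for all $t$ if it did at $t=0$.

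The plasma side I would carry out in physical coordinates and transfer back. Since $h=\eta\,\tilde H$ with $\eta$ equal to the transposed cofactor matrix of the Jacobian $D\Phi$, i.e.\ $\eta=(1+\Psi_1)(D\Phi)^{-1}$, the Piola identity gives $\dv h=(1+\Psi_1)\,(\dv H)\circ\Phi$, so it suffices to prove $\dv H=0$ in $\Omega^+(t)$ and $H\cdot N=0$ on $\Gamma(t)$. Taking $\dv$ of the induction equation in \eqref{MHDnon} and cancelling the two symmetric mixed-derivative terms produces $\tfrac{d}{dt}(\dv H)+(\dv v)\,\dv H=0$, a transport equation along particle paths; because the interface is a material surface ($\partial_t\varphi=v_N$) these paths never cross $\Gamma(t)$, so no boundary datum is needed and $\dv H=0$ persists. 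For the boundary condition, using $\dv H=0$ to rewrite induction as $\tfrac{d}{dt}H=(H\cdot\nabla)v-H\,\dv v$ and writing $N=\nabla F$ with $F=x_1-\varphi$, an index swap cancels the $(H\cdot\nabla)v\cdot N$ contributions and leaves $\tfrac{d}{dt}(H\cdot N)=-(\dv v)(H\cdot N)+(H\cdot\nabla)\tfrac{dF}{dt}$. The delicate point, which I expect to be the main obstacle, is the last term: $\tfrac{dF}{dt}=v_N-\partial_t\varphi$ vanishes on $\Gamma(t)$ only, so its tangential derivatives must be converted into a normal one by differentiating the identity $\tfrac{dF}{dt}(t,\varphi(t,x'),x')\equiv0$, which yields exactly $(H\cdot\nabla)\tfrac{dF}{dt}=(H\cdot N)\,\partial_1\tfrac{dF}{dt}$ on $\Gamma$. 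Consequently
\[
\tfrac{d}{dt}(H\cdot N)=\big(\partial_1\tfrac{dF}{dt}-\dv v\big)(H\cdot N)\qquad\text{on }\Gamma(t),
\]
a homogeneous transport equation along $\Gamma(t)$, so $H\cdot N$ vanishes for all $t$. Transferring back through the diffeomorphism gives $\dv h=0$ and $H_N=h_1|_{x_1=0}=0$, which completes \eqref{19} and \eqref{20}.
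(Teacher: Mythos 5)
Your proposal is correct, and it derives the same two key identities as the paper --- the scalar conservation law $\p_t(\dv\hg)-\p_1\bigl(\frac{\Psi_t}{1+\Psi_1}\,\dv\hg\bigr)=0$ obtained by taking the divergence of \eqref{eq:Maxwell1}, and the restriction $\mep\,\p_t\hg_1+\p_2\Es_3-\p_3\Es_2-\mep\varphi_t\,\dv\hg=0$ of its first component to $\Gamma$ --- but it closes the interior-constraint argument by a different technique. The paper does not integrate along characteristics: it multiplies the conservation law by $\dv\hg$ and integrates by parts over $\Omega^-$, so that the boundary contribution $\int_{x_1=0}\varphi_t\,|\dv\hg|^2\,dx'$ vanishes on the portion where $\varphi_t>0$ by \eqref{eq:div2} and has a favorable sign on the portion where $\varphi_t<0$, after which Gronwall's lemma concludes. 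Your method of characteristics consumes \eqref{eq:div2} in exactly the same way (as zero inflow data), but it is less robust at glancing points, where $\varphi_t=0$: there the characteristic is tangent to $\Gamma$, the point is neither inflow nor outflow, and \eqref{eq:div2} is not prescribed, so strictly speaking you must add a continuity or approximation argument; the energy identity disposes of this set automatically, since $\varphi_t|\dv\hg|^2$ vanishes there regardless of the data. Your derivation of $\hg_1=0$ on $\Gamma$ from $\Es_2=\Es_3=0$ and $\dv\hg=0$ coincides with the paper's. Finally, for the plasma constraints in \eqref{19}--\eqref{20} the paper gives no proof at all, referring to \cite{trakhinin09arma}, whereas you supply a self-contained one (the Piola identity $\dv h=(1+\Psi_1)\,(\dv H)\circ\Phi$, the cancellation of the mixed-derivative terms in the divergence of the induction equation, and the conversion of the tangential derivatives of $\frac{{\rm d}F}{{\rm d}t}$ into $(H\cdot N)\,\p_1\frac{{\rm d}F}{{\rm d}t}$ on the material boundary); this is essentially the computation behind the citation, and writing it out makes the proposition self-contained at the price of length.
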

For the proof of Proposition~\ref{p1}, see Appendix~\ref{computations}. As a consequence of \eqref{eq:Maxwell1} and Proposition \ref{p1}, the equation \eqref{16'} in vacuum can be rewritten as
\begin{equation}
\begin{array}{ll}\label{eq:Maxwell3}
\begin{cases}
\mep\p_t \hg + \rot \Es=0\, ,\\
\mep\p_t \eg - \rot \Hs=0\, .
\end{cases}
\end{array}
\end{equation}
It is clear that, if the initial data \eqref{18} satisfy \eqref{eq:diveghg} for $t=0$, then the solution of \eqref{eq:Maxwell3} satisfies \eqref{eq:diveghg} for all $t\in [0,T]$. In particular, this yields \eqref{eq:div2}.
Therefore, for such initial data, instead of \eqref{16}--\eqref{18} we can equivalently consider problem \eqref{16}, \eqref{eq:Maxwell3}, \eqref{17}, \eqref{18}, disregarding \eqref{eq:div2} which will be recovered {\it a posteriori}.
Therefore, for initial data as in Proposition \ref{p1}, problem \eqref{4}, \eqref{eq:MaxwellB}, \eqref{eq:15-1}--\eqref{11} can be reformulated on the fixed
reference domains $\Omega^\pm$ as
\begin{equation}
\mathbb{P}(U,\Psi)=0\quad\mbox{in}\ [0,T]\times \Omega^+,\label{16.1}
\end{equation}
\begin{equation}\label{16'.1}
\mep B_0(\Psi)\partial_t\Ws+\sum_{j=1}^3B_j\partial_j\Ws +\mep {B}_4(\Psi)\Ws= 0\quad\mbox{in}\ [0,T]\times \Omega^-,
\end{equation}
\begin{equation}
\mathbb{B}(U,\Wc,\varphi )=0\quad\mbox{on}\ [0,T] \times\Gamma,\label{17.1}
\end{equation}
\begin{equation}
(U,\Wc)|_{t=0}=(U_0,\Wc_0)\quad\mbox{in}\ \Omega^+\times\Omega^-,\qquad \varphi|_{t=0}=\varphi_0\quad \mbox{on}\ \Gamma,\label{18.1}
\end{equation}
where
\[
\mep B_0(\Psi)\partial_t\Ws+\sum_{j=1}^3B_j\partial_j\Ws +\mep {B}_4(\Psi)\Ws= \begin{pmatrix}
\mep \p_t \hg + \rot \Es\\
\mep \p_t \eg - \rot \Hs\\
\end{pmatrix},
\]
\begin{equation}\label{eq:B0}
B_0(\Psi)\doteq  KJ^{-1}>0,\qquad {B}_4(\Psi)=\partial_t{B}_0(\Psi),
\end{equation}
with the matrices~$K,J$ defined in~\eqref{eq:K} and \eqref{eq:J}, respectively. The matrix $B_0(\Psi)$ is also symmetric, as well as the $B_j$, so that the system in \eqref{16'.1} is symmetric hyperbolic.

Due to the particular boundary conditions, problems \eqref{16}--\eqref{18} and \eqref{16.1}--\eqref{18.1} are initial boundary value problems with characteristic boundary. Moreover, since we prescribe a different number of conditions on different portions of the boundary (see \eqref{17}, \eqref{eq:div2}), the boundary is non-uniformly characteristic for \eqref{16}--\eqref{18}, that is characteristic with variable multiplicity, see \cite{nishitani00,secchinonunif1,secchinonunif2}. On the contrary, after the introduction of the new variables $\Ws$ and the new formulation \eqref{16'.1} in the vacuum region, the boundary becomes characteristic  with constant multiplicity  for system \eqref{16.1}--\eqref{18.1} (here we don't assume \eqref{eq:div2}).

{The new formulation \eqref{16.1}--\eqref{18.1} can} be used for the resolution of the problem with variable sign of $\p_t\varphi$ (which yields the change of multiplicity for \eqref{16}--\eqref{18}), that is with both expansion and contraction of the plasma region in vacuum. {This extends the analysis of  \cite{mandrik-trakhinin,trakhinin12}, where only the case of expansion of plasma into vacuum is considered, i.e. it is assumed $\p_t\varphi<0$ throughout the whole boundary.}

For a detailed discussion on the number of boundary conditions in \eqref{16}--\eqref{18} and in \eqref{16.1}--\eqref{18.1} see Appendix \ref{calcolosegni}.

\section{The linearized problem}\label{sec:linearization}


\subsection{Basic state}
\label{s2.2}

Let us denote
\[
Q^\pm_T\doteq   (-\infty,T]\times\Omega^\pm,\quad \omega_T\doteq  (-\infty,T]\times\Gamma.
\]
Let
\begin{equation}
(\hat{U}(t,x ),\hat\Wc (t,x),\hat{\varphi}(t,{x}'))
\label{21}
\end{equation}
be a given sufficiently smooth vector-function
with $\hat{U}=(\hat{q},\hat{v},\hat{H},\hat{S})$ and $\hat\Wc=(\hat\Hc,\hat\Ec)$, respectively defined on $Q^+_T,Q^-_T,\omega_T$, with
\begin{equation}
\begin{array}{ll}\label{}
\|\hat{U}\|_{W^{2,\infty}(Q^+_T)}+
\|\partial_1\hat{U}\|_{W^{2,\infty}(Q^+_T)}+
\|\hat{\Wc}\|_{W^{2,\infty}(Q^-_T)}+
\|\hat{\varphi}\|_{W^{3,\infty}([0,T]\times\R^2)} \leq \kappa,\\
\\
 \| \hat\varphi\|_{{\mathcal C}
([0,T];H^{2}(\R^2))} \le 1,
\label{22}
\end{array}
\end{equation}
where $\kappa>0$ is a constant.
Corresponding to the given $\hat\varphi$, we construct $\hat\Psi$ and the diffeomorphism $\hat\Phi$  as in Lemma~\ref{lemma3} such that
\[
1+\hat{\Psi}_1\geq 1/2.
\]
We assume that the basic state \eqref{21} satisfies the following conditions (which are less restrictive in the vacuum side than in~\cite{trakhinin12}). We have, for some positive $\rho_0,\rho_1\in\R$,
\begin{equation}
\rho (\hat{p},\hat{S})\geq \rho_0 >0,\quad \rho_p(\hat{p},\hat{S})\ge \rho_1 >0 \qquad \mbox{in}\ \overline{Q}^+_T,
\label{23}
\end{equation}
\begin{equation}
\partial_t\hat{H}+\frac{1}{1+\hat\Psi_1}\left\{ (\hat{w} \cdot\nabla )
\hat{H} - (\hat{h} \cdot\nabla ) \hat{v} + \hat{H}{\rm div}\,\hat{u}\right\} =0\qquad \mbox{in}\ Q^+_T,
\label{26}
\end{equation}
\begin{equation}\label{eq:divbasic}
\dv \hat\hg {\,=\dv\hat\eg} =0\qquad \mbox{in}\; Q^-_T,
\end{equation}
\begin{equation}\label{eq:Maxwbasic}
\mep \p_t \hat\hg + \rot \hat\Es = 0 \qquad \mbox{on}\,\; \omega_T,
\end{equation}
\begin{equation}
\partial_t\hat{\varphi}-\hat{v}_{N}=0,\quad \hat\Es_2=\hat\Es_3=0 \qquad \mbox{on}\,\; \omega_T,\label{24}
\end{equation}
%
%
%
where all the ``hat'' functions are determined like for the corresponding values of $(U,\Wc,\varphi)$, i.e.
\[
\hat p=\hat q  -|\hat H  |^2/2 , \qquad \hat{v}_{N}=\hat{v}_1- \hat{v}_2\hat\Psi_2 - \hat{v}_3\hat\Psi_3 ,
\]
\[
\hat{h}=(\hat{H}_{N},\hat{H}_2(1+\hat\Psi_1),\hat{H}_3(1+\hat\Psi_1)), \qquad \hat{\mathfrak{h}}=(\hat{\mathcal{H}}_{N},\hat{\mathcal{H}}_2(1+\hat\Psi_1),\hat{\mathcal{H}}_3(1+\hat\Psi_1)),
\]
\[
\hat{H}_{N}=\hat{{H}}_1- \hat{{H}}_2\hat\Psi_2 - \hat{H}_3\hat\Psi_3 , \qquad \hat{\mathcal{H}}_{N}=\hat{\mathcal{H}}_1- \hat{\mathcal{H}}_2\hat\Psi_2 - \hat{\mathcal{H}}_3\hat\Psi_3 ,
\]
\[
\hat\Es = ( (1+\hat\Psi_1)\hat\Ec_1 , \hat\Ec_2+\hat\Psi_2\hat\Ec_1-\mep \hat\Psi_t\hat\Hc_3 , \hat\Ec_3+\hat\Psi_3\hat\Ec_1+\mep\hat\Psi_t\hat\Hc_2 )\, ,
\]
and where
\[
\hat{u}=(\hat{v}_{N},\hat{v}_2(1+\hat\Psi_1),\hat{v}_3(1+\hat\Psi_1)),\quad
\hat{w}=\hat{u}-(\hat\Psi_t ,0,0).
\]
%
It follows from (\ref{26}) that the constraints
\begin{equation}
\dv \hat{h}=0\quad \mbox{in}\; Q^+_T,\qquad \hat{H}_{N}=0\quad \mbox{on}\; \omega_T
\label{27}
\end{equation}
are satisfied for the basic state (\ref{21}) if they hold at $t=0$ (see \cite{trakhinin09arma} for the proof).
Thus, for the basic state we also require the fulfillment of conditions
\eqref{27} at $t=0$.
Other assumptions on the basic state needed for the stability analysis will be given in the statement of Theorem \ref{main}.

Note that (\ref{22}) yields
\[
 \|\nabla_{t,x}\hat{\Psi}\|_{W^{2,\infty}([0,T]\times\R^3)}\leq C(\kappa),
\]
where $\nabla_{t,x}=(\partial_t, \nabla )$ and $C=C(\kappa)>0$ is a constant depending on $\kappa$.
We also remark that thanks to~\eqref{24} and taking into account the definition of~$\hg, \Es$ (and recalling that~$\hat\Psi_1=0$ on~$\omega_T$), equation~\eqref{eq:Maxwbasic} may be written in the form:
\begin{equation}\label{eq:Maxwbasic2}
\begin{cases}
\p_t \hat \hg_1=0 & \text{on~$\omega_T$,} \\
\mep \p_t \hat \Hc_2 + \p_3 \hat \Ec_1 - \p_1 \hat \Es_3 =0& \text{on~$\omega_T$,} \\
\mep \p_t \hat \Hc_3 + \p_1 \hat \Es_2 - \p_2 \hat \Ec_1 =0& \text{on~$\omega_T$.}
\end{cases}
\end{equation}
\\
It follows from the first equation in~\eqref{eq:Maxwbasic2}, that the constraint
\begin{equation}\label{eq:h1basic}
\hat\hg_1=0\quad \mbox{on}\; \omega_T
\end{equation}%
is satisfied for the basic state (\ref{21}) if it holds at $t=0$. We remark that if we strengthen assumption~\eqref{eq:Maxwbasic} to the following
\begin{equation}\label{eq:Maxbasic}
\mep \p_t \hat{\hg}+\nabla\times\hat \Es=0\qquad \mbox{in}\; \overline{Q}^-_T,
\end{equation}
then also~\eqref{eq:divbasic} follows as a consequence of~\eqref{eq:Maxbasic}, provided that 
it holds for~$t=0$.
\begin{notation}\label{not:hat}
From now on, we denote by $\Hs, \Es, \hg, \eg$ variables defined using the basic state $\hat\Psi$ (instead of $\Psi$) and $\Wc$ (without ``hat''), while ``hat''-variables $\hat\Hs, \hat\Es, \hat\hg, \hat\eg$ are defined using the basic state for all terms ($\hat\Psi$ and $\hat\Wc$). For instance,
\begin{align*}
\Hs_1 = (1+\hat\Psi_1)\Hc_1\, , \qquad \hat\Hs_1 = (1+\hat\Psi_1)\hat\Hc_1\, .
\end{align*}
\end{notation}


\subsection{Linearized problem}
\label{s2.3}

{We want to linearize \eqref{16.1}--\eqref{17.1} (or \eqref{16}--\eqref{17}), and in particular \eqref{16'.1} in vacuum, since in this formulation we have the main advantage of a characteristic boundary of constant multiplicity. However, the linearization can be performed more easily, by resorting to standard techniques, if we recast \eqref{16'.1} in terms of $\Wc$ (clearly, the multiplicity remains the same). We recall that $K$ has positive eigenvalues, hence it is invertible and the multiplication by $K$ does not alter the number of incoming characteristics of a system, so that we may consider \eqref{16'.1} multiplied on the left by $K^{-1}$. Noticing that  \eqref{eq:Maxwell1} multiplied by $K^{-1}$ is indeed $\tilde{\mathbb V}(\Wc, \Psi)$, we obtain
\begin{align} \label{eq.V}
\mathbb{V}(\Wc, \Psi) := \tilde{\mathbb V}(\Wc, \Psi) - \mep K^{-1} \begin{pmatrix} Q & \underline{0}^T \\ \underline{0}^T & Q \end{pmatrix} \begin{pmatrix} \dv (\eta\Hc) \\ \dv(\eta\Ec)\end{pmatrix} = 0\, .
\end{align}}

{The linearized equations for \eqref{16.1}, \eqref{eq.V}, \eqref{17.1} read:
\[
\mathbb{P}'(\hat{U},\hat{\Psi})(\delta U,\delta\Psi)\doteq
\frac{\rm d}{{\rm d}\theta}\mathbb{P}(U_{\theta},\Psi_{\theta})|_{\theta =0}=f
\qquad \mbox{in}\ Q^+_T,
\]
\[
\mathbb{V}'(\hat\Wc,\hat{\Psi})(\delta\Wc,\delta\Psi)\doteq
\frac{\rm d}{{\rm d}\theta}\mathbb{V}(\Wc_\theta,\Psi_{\theta})|_{\theta =0}=\chi
\qquad \mbox{in}\ Q^-_T,
\]
\[
\mathbb{B}'(\hat{U},\hat\Wc,\hat{\varphi})(\delta U,\delta\Wc,\delta \varphi )\doteq
\frac{\rm d}{{\rm d}\theta}\mathbb{B}(U_{\theta},\Wc_\theta,\varphi_{\theta})|_{\theta =0}={g}
\qquad \mbox{on}\ \omega_T,
\]
where $U_{\theta}=\hat{U}+ \theta\,\delta U$, $\Wc_{\theta}=
\hat{\Wc}+\theta\,\delta \Wc$,
$\varphi_{\theta}=\hat{\varphi}+ \theta\,\delta \varphi$;
$\delta\Psi$ is constructed from $\delta \varphi$ as in Lemma \ref{lemma3} and
$\Psi_{\theta}=\hat\Psi +  \theta\,\delta\Psi$.}

Here we introduce the source terms $f=(f_1,\ldots ,f_8)$, $\chi=(\chi_1, \ldots
\chi_6)$ and $g=(g_1,g_2,g_3,g_4)$ to make the interior equations and the boundary conditions inhomogeneous.

\subsubsection{Vacuum part.} First, we compute the exact form of the linearized equations in $Q^-_T$ (below we drop $\delta$). {By exploiting \eqref{eq:divbasic}, it is standard to obtain (see~\cite{SeTr})
\begin{align*}
\frac{\rm d}{{\rm d}\theta}\mathbb{V}(\Wc_\theta,\Psi_{\theta})|_{\theta =0}
= \mathbb{V}( \Wc,\hat{\Psi}) - \, \left\{ \mathbb{V}(\hat{\Psi}) \Psi \right\} \frac{\p_1 \hat{\Wc}}{1+\hat{\Psi}_1} \, .
\end{align*}
Note that~$\mathbb{V}(\hat{\Psi})\Psi$ denotes the matrix obtained from $\mathbb V(\Wc, \Psi)$ with $\hat\Psi$ instead of $\Psi$ and where all derivatives are applied to $\Psi$.}

Since $\mathbb{V}'(\hat{\Wc},\hat{\Psi})({\Wc},{\Psi})$ is a first-order differential operator in $\Psi$,
as in  \cite{alinhac}, the linearized problem is rewritten in terms of the ``good unknown''
\begin{equation}
\dot{\Wc}\doteq  \Wc -\frac{\Psi}{1+\hat\Psi_1}\,\partial_1\hat{\Wc}\, ,
\label{29}
\end{equation}
so that, {again by standard computations, we deduce}
%
{\begin{align*}
\mathbb{V}(\dot{\Wc},\hat{\Psi}) + \frac\Psi{1+\hat{\Psi}_1}\, \p_1 \left\{ \mathbb{V}(\hat{\Wc},\hat{\Psi})  \right\} = \chi\, .
\end{align*}}

We can consider
\[ 
\frac\Psi{1+\hat{\Psi}_1}\, \p_1 \left\{ \mathbb{V}{( \hat{\Wc},\hat{\Psi})} \right\} \]
as an error term for the nonlinear analysis that we will address in a future work, so that we get the system
\[ \mathbb{V}{(\dot \Wc,\hat{\Psi})} = \chi \, ,\]
which has the same form of the starting system, but with coefficients depending on $\hat{\Psi}$ instead of~$\Psi$.

\subsubsection{Plasma part.} Proceeding similarly as in the vacuum part, we linearize, introduce the good unknown and remove error terms to obtain (see~\cite{cs, SeTr})
\[
P(\hat{U},\hat{\Psi}){\dot U} +{\mathcal C}(\hat{U},\hat{\Psi})
{\dot U}=f\, ,
\]
where the matrix ${\mathcal C}(\hat{U},\hat{\Psi})$ is determined as follows:
\[
\begin{array}{r}
{\mathcal C}(\hat{U},\hat{\Psi})Y
= (Y ,\nabla_yA_0(\hat{U} ))\partial_t\hat{U}
 +(Y ,\nabla_y\tilde{A}_1(\hat{U},\hat{\Psi}))\partial_1\hat{U}
 \\[6pt]
+ (Y ,\nabla_yA_2(\hat{U} ))\partial_2\hat{U}
+ (Y ,\nabla_yA_3(\hat{U} ))\partial_3\hat{U},
\end{array}
\]
\[
(Y ,\nabla_y A(\hat{U}))\doteq  \sum_{i=1}^8y_i\left.\left(\frac{\partial A (Y )}{
\partial y_i}\right|_{Y =\hat{U}}\right),\quad Y =(y_1,\ldots ,y_8).
\]

\subsubsection{Boundary conditions.} We have

\[ \mathbb{B}'(\hat{U},\hat{\Wc},\hat{\Psi})(U,\Wc,\varphi) = \begin{pmatrix}
\varphi_t+\hat{v}_2\varphi_2+\hat{v}_3\varphi_3-v_N \\
q-\hat{\Hc}\cdot\Hc +\hat{\Ec}\cdot\Ec\\
\Es_2 -\mep\varphi_t\hat{\Hc}_3+\varphi_2\hat{\Ec}_1\\
\Es_3 + \mep\varphi_t\hat{\Hc}_2+\varphi_3\hat{\Ec}_1
\end{pmatrix}_{|_{x_1=0}} \, , \]
where $q\doteq  p+ \hat{H}\cdot H$ and $v_{N}\doteq   v_1-v_2\hat\Psi_2 -v_3\hat\Psi_3 $.
Taking into account assumptions \eqref{24} and recalling that $\Psi\equiv\varphi$ on $\omega_T$,
we rewrite our linearized equations in terms of the good unknowns:
\begin{align}\begin{split}
\mathbb{B}'(\hat{U},\hat{\Wc},\hat{\varphi})(\dot{U},\dot{\Wc},\varphi )& \doteq   \mathbb{B}'(\hat{U},\hat{\Wc},\hat{\varphi})(U,\Wc,\varphi )\\[6pt] &  = \begin{pmatrix}
\varphi_t+\hat{v}_2\varphi_2+\hat{v}_3\varphi_3-\dot v_N -\varphi\p_1\hat v_N\\
\dot q-\hat{\Hc}\cdot\dot\Hc +\hat{\Ec}\cdot\dot\Ec + \varphi[\p_1\hat q]\\
\dot \Es_2 -\mep\varphi_t\hat{\Hc}_3+\varphi_2\hat{\Ec}_1 +\varphi\p_1\hat\Es_2\\
\dot\Es_3 + \mep\varphi_t\hat{\Hc}_2+\varphi_3\hat{\Ec}_1 +\varphi\p_1\hat\Es_3
\end{pmatrix}_{|_{x_1=0}} \, ,
\label{32}\end{split}
\end{align}
where $\dot{v}_{N}=\dot{v}_1-\dot{v}_2\hat\Psi_2 -\dot{v}_3\hat\Psi_3 $,
$\dot{\mathcal{H}}_{N}=\dot{\mathcal{H}}_1-\dot{\mathcal{H}}_2\hat\Psi_2 -\dot{\mathcal{H}}_3\hat\Psi_3 $, and
\[
[\partial_1\hat{q}]=(\partial_1\hat{q})|_{x_1=0}-(\hat{\mathcal{H}} \cdot \partial_1\hat{\mathcal{H}})|_{x_1=0}
+(\hat{\mathcal{E}} \cdot \partial_1\hat{\mathcal{E}})|_{x_1=0}\,.
\]
Thanks to the second and third equations in~\eqref{eq:Maxwbasic2}, it follows that
\begin{equation}\label{eq:newboundaryc}
\begin{pmatrix}
-\mep\varphi_t\hat{\Hc}_3+\varphi_2\hat{\Ec}_1 +\varphi\p_1\hat\Es_2\\
\mep\varphi_t\hat{\Hc}_2+\varphi_3\hat{\Ec}_1 +\varphi\p_1\hat\Es_3
\end{pmatrix}_{|_{x_1=0}} =
\begin{pmatrix}
-\mep\p_t(\varphi\hat{\Hc}_3)+\p_2(\varphi\hat{\Ec}_1) \\
\mep\p_t(\varphi\hat{\Hc}_2)+\p_3(\varphi\hat{\Ec}_1)
\end{pmatrix}_{|_{x_1=0}}\,,
\end{equation}
%
%
hence, we obtain
\begin{align}\begin{split}
\begin{pmatrix}
\varphi_t+\hat{v}_2\varphi_2+\hat{v}_3\varphi_3- \dot v_N -\varphi\p_1\hat v_N\\
\dot q-\hat{\Hc}\cdot\dot\Hc +\hat{\Ec}\cdot\dot\Ec + \varphi[\p_1\hat q]\\
\dot \Es_2 -\mep\p_t(\varphi\hat{\Hc}_3)+\p_2(\varphi\hat{\Ec}_1)\\
\dot \Es_3 + \mep\p_t(\varphi\hat{\Hc}_2)+\p_3(\varphi\hat{\Ec}_1)
\end{pmatrix}_{|_{x_1=0}} = g \, .
\label{32bis}\end{split}
\end{align}

\subsubsection{Conclusion.} The new form of our linearized problem for $(\dot{U},\dot{\mathcal{W}},\varphi )$ reads:
\begin{subequations}\label{34}
\begin{align}
\hat{A}_0\partial_t {\dot U}+\sum_{j=1}^{3}\hat{A}_j\partial_j {\dot U}+
\hat{\mathcal C} {\dot U}=f \qquad &\mbox{in}\ Q^+_T,\label{34a}
\\
\begin{pmatrix} \label{36}
\mep\p_t \dot\hg + \rot \dot\Es\\
\mep\p_t \dot\eg - \rot \dot\Hs
\end{pmatrix} =\chi \qquad &\mbox{in}\ Q^-_T,
\\
\varphi_t=\dot{v}_{N}-\hat{v}_2 \varphi_2-\hat{v}_3\varphi_3 + \varphi\,\partial_1\hat{v}_{N}+g_1,  \qquad &\label{34b}
\\
\dot{q}=\hat{\mathcal{H}}\cdot\dot{\mathcal{H}}- \hat\Ec\cdot\dot\Ec -  [ \partial_1\hat{q}] \varphi +g_2, \qquad & \label{35}
\\
\dot \Es_2 = \mep\p_t(\varphi\hat{\Hc}_3) - \p_2(\varphi\hat{\Ec}_1) + g_3 \qquad &\mbox{on}\ \omega_T, \label{38}\\
\dot\Es_3 = -\mep\p_t(\varphi\hat{\Hc}_2)-\p_3(\varphi\hat{\Ec}_1)+g_4\qquad &\mbox{on}\ \omega_T,
\label{37}
\\
(\dot{U},\dot{\Wc},\varphi )=0\qquad &\mbox{for}\ t<0,\label{38a}
\end{align}
\end{subequations}
%
where
\[
\hat{A}_{\alpha}\doteq  {A}_{\alpha}(\hat{U}),\quad \alpha =0,2,3,\quad
\hat{A}_1\doteq \tilde{A}_1(\hat{U},\hat{\Psi}),\quad
\hat{\mathcal C}\doteq  {\mathcal C}(\hat{U},\hat{\Psi}),
\]
and the ``dot''-variables $\dot\Hs, \dot\Es, \dot\hg, \dot\eg$ are defined analogously to $\Hs, \Es, \hg, \eg$, using $\hat\Psi$ and $\dot\Wc$ instead of $\Wc$.

We assume that the source term $ \chi$ of~\eqref{36} satisfies the constraint
\begin{equation}\label{eq:constraintchi}
\dv \Xi'=\dv \Xi''=0\,, \qquad \text{where} \quad \Xi'=(\chi_1,\chi_2,\chi_3)\,, \quad \Xi''=(\chi_4,\chi_5,\chi_6)\,,
\end{equation}
that  the source terms $f, \chi$ and the boundary datum $g$ vanish in the past, and we consider the case of zero initial data. We postpone the case of nonzero initial data to the nonlinear analysis of a future work (see e.g. \cite{cs,SeTrNl,trakhinin09arma}).

\subsection{Reduction to homogeneous constraints in the \lq\lq vacuum part\rq\rq}

We decompose $\dot{\Wc}$ in \eqref{34} as $\dot{\Wc}=\Wc'+\Wc''$ (and accordingly $\dot{\Hs}=\Hs'+\Hs''$, and similarly for $\dot\Es, \dot\hg, \dot\eg$), where $\Wc''$ is required to solve for each $t$
\begin{equation}
\begin{array}{rl}\label{elliptic}
\begin{pmatrix}
\mep\p_t \hg'' + \rot \Es''\\
\mep\p_t \eg'' - \rot \Hs''
\end{pmatrix}=\chi \qquad &\mbox{in}\ \Omega^-,
\\*[1em]
\Es''_2 = g_3, \quad \Es''_3=g_4\qquad &\mbox{on}\ \Gamma.
\end{array}
\end{equation}
The source term $ \chi$ of the first equation should satisfy the constraint~\eqref{eq:constraintchi}.
By classical results on Maxwell's equations, we have the following.
\begin{lemma}\label{elliptic2}
Assume that the data $\chi,g_3,g_4$ in \eqref{elliptic}, vanishing in appropriate way as $x$ goes to infinity, satisfy the constraints \eqref{eq:constraintchi}. Then there exists a solution $\Wc''$ of \eqref{elliptic} vanishing at infinity.
\end{lemma}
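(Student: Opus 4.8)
The plan is to solve \eqref{elliptic} as a (variable-coefficient) Maxwell system for the physical vacuum fields, producing a particular solution that absorbs both the interior source $\chi$ and the boundary data $g_3,g_4$. First I would recall that the transformed fields $\hg'',\eg'',\Hs'',\Es''$ are obtained from $(\Hc'',\Ec'')$ through the invertible linear maps built from $\hat\Psi$ (the matrices $\hat\eta,\hat K,\hat J$, invertible by Lemma~\ref{lemma3} and Hypothesis~\ref{hyp:invertibility0}); hence $\Wc''$ is uniquely recoverable from $(\Hc'',\Ec'')$ and it suffices to produce the latter. In these variables the first line of \eqref{elliptic} is exactly the inhomogeneous vacuum Maxwell system, a symmetric hyperbolic system whose principal part is $\sum_{j}B_j\p_j$ with $B_1$ as in \eqref{eq:Bmatrix}. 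Since $B_1'$ has rank two, the boundary matrix $B_1$ has exactly two positive, two negative, and two vanishing eigenvalues, so that the correct number of scalar boundary conditions is two; the prescription of the two tangential components $\Es''_2,\Es''_3$ in \eqref{elliptic} is precisely of the admissible (perfect-conductor) type for Maxwell and matches this count.

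Next I would reduce to homogeneous boundary data: choosing any smooth lifting $\Wc''_0$ whose associated field satisfies $\Es''_2=g_3$, $\Es''_3=g_4$ on $\Gamma$ and which decays at infinity, the unknown $\Wc''-\Wc''_0$ solves \eqref{elliptic} with zero boundary data and a modified, still divergence-compatible, interior source. At this point the constraint \eqref{eq:constraintchi} enters: applying $\dv$ to the two interior equations and using that the divergence of a curl vanishes together with $\dv\Xi'=\dv\Xi''=0$, one obtains $\mep\,\p_t\dv\hg''=\mep\,\p_t\dv\eg''=0$; since all data vanish in the past, this forces $\dv\hg''=\dv\eg''=0$ for all $t$. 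This is exactly what guarantees that the complementary part $\Wc'=\dot\Wc-\Wc''$ inherits the homogeneous divergence constraints, and it is the reason for imposing \eqref{eq:constraintchi}.

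Finally I would invoke the classical solvability of Maxwell's equations: solving the resulting system forward in time from the quiescent past (equivalently, for each $t$ solving the associated stationary $\mathrm{div}$--$\mathrm{curl}$ problem, whose solvability is ensured by the compatibility condition just established) yields a solution $\Wc''$ vanishing at infinity, which is the assertion. The main obstacle I expect is not existence per se---Maxwell with prescribed tangential electric field is classically well posed---but the bookkeeping forced by the change of variables: one must check that, after conjugation by the $\hat\Psi$-dependent matrices, the boundary condition remains of admissible type and that the coefficients (smooth, and close to constant by the smallness of $\hat\Psi_1$ and $\mep\hat\Psi_t$) do not destroy dissipativity, and one must arrange the lifting and the source decay so that the constructed $\Wc''$ genuinely vanishes at infinity.
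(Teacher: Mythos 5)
Your proposal is correct and follows essentially the same route as the paper: the paper offers no detailed argument at all, stating only that the lemma holds ``by classical results on Maxwell's equations'' (with regularity and decay at infinity deliberately left unspecified, deferred to the nonlinear follow-up paper), and your sketch---recovering $\Wc''$ from the transformed fields via the invertible $\hat\Psi$-dependent matrices, counting the two incoming characteristics of $B_1$ against the two prescribed tangential components of $\Es''$, lifting the boundary data, and propagating the divergence constraints from \eqref{eq:constraintchi}---is precisely the elaboration of that classical appeal. The only slip is inessential: for the linearized problem the relevant invertibility assumption is Hypothesis~\ref{hyp:invertibility} (on $\hat v_N$), not Hypothesis~\ref{hyp:invertibility0}.
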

In the statement of the lemma above we intentionally leave unspecified the description of the regularity and the behavior at infinity of the data and consequently of the solution. This point will be faced in the forthcoming paper on the resolution of the nonlinear problem.

Given $\Wc''$, now we look for $\Wc'$ such that
\begin{equation}
\begin{array}{ll}\label{postelliptic}
\mep\p_t \hg' + \rot \Es' =0,\qquad \mep\p_t \eg' - \rot \Hs'=0 \qquad &\mbox{in}\ Q^-_T,
\\
\dot{q}=\hat{\mathcal{H}}\cdot{\mathcal{H}}' - \hat\Ec\cdot\Ec'-  [ \partial_1\hat{q}] \varphi +g_2' , \qquad &
\\
\Es_2' = \mep\p_t(\varphi\hat{\Hc}_3) - \p_2(\varphi\hat{\Ec}_1) , \qquad  \qquad &
\\
\Es_3' = -\mep\p_t(\varphi\hat{\Hc}_2)-\p_3(\varphi\hat{\Ec}_1) &\mbox{on}\ \omega_T,
\end{array}
\end{equation}
where we have denoted $g_2'=g_2+\hat{\mathcal{H}}\cdot{\mathcal{H}}''-\hat\Ec\cdot\Ec''$. If $\Wc''$ solves \eqref{elliptic} and $\Wc'$ is a solution of \eqref{postelliptic}, then $\dot{\Wc}=\Wc'+\Wc''$ clearly solves \eqref{36}, \eqref{35}{--}\eqref{37}.

From \eqref{34}, \eqref{postelliptic}, the new form of the reduced linearized problem with unknowns ($\dot U,\Wc'$),  dropping for convenience the prime sign in $\Wc',g_2'$ and the dot sign in $\dot U$, reads
\begin{subequations}\label{34''}
\begin{align}
\hat{A}_0\partial_t{U}+\sum_{j=1}^{3}\hat{A}_j\partial_j{U}+
\hat{\mathcal C}{U}=f \qquad &\mbox{in}\ Q^+_T,\label{34''a}
\\
\begin{pmatrix}
\mep\p_t \hg + \rot \Es\\
\mep\p_t \eg - \rot \Hs
\end{pmatrix} = {0} \qquad &\mbox{in}\ Q^-_T, \label{36''b}
\\
\varphi_t={v}_{N}-\hat{v}_2 \varphi_2-\hat{v}_3\varphi_3 + \varphi\,\partial_1\hat{v}_{N} +g_1 , \qquad &\label{37c}
\\
 {q}=\hat{\mathcal{H}}\cdot{\mathcal{H}}-\hat\Ec\cdot\Ec -  [ \partial_1\hat{q}] \varphi  +g_2 , \qquad & \label{37d}
\\
\Es_2 =\mep\p_t(\varphi\hat{\Hc}_3) - \p_2(\varphi\hat{\Ec}_1) , \qquad & \label{37''}
\\
\Es_3 = -\mep\p_t(\varphi\hat{\Hc}_2)-\p_3(\varphi\hat{\Ec}_1) \qquad &\mbox{on}\ \omega_T, \label{37''bis}
\\
({U},{\Wc},\varphi )=0\qquad & \mbox{for}\ t<0.\label{38''f}
\end{align}
\end{subequations}
%
\subsection{Reduction to homogeneous constraints in the \lq\lq plasma part\rq\rq}
From problem \eqref{34''} we can deduce nonhomogeneous equations associated with the divergence constraint ${\rm div}\,{h}=0$ and the ``redundant'' boundary conditions ${H_N}|_{x_1=0}=0$ for the nonlinear problem. Proceeding as in \cite{SeTr}, Proposition 7, we can
reduce \eqref{34''} to a problem with homogeneous constraints \eqref{93} and \eqref{95} in terms of a new variable $U^\natural$.

Dropping for convenience the indices $^{\natural}$, the new form of our reduced linearized problem now reads
\begin{subequations}\label{34'}
\begin{align}
\hat{A}_0\partial_t{U}+\sum_{j=1}^{3}\hat{A}_j\partial_j{U}+
\hat{\mathcal C}{U}=F \qquad &\mbox{in}\ Q^+_T,\label{34'a}
\\
\begin{pmatrix}
\mep\p_t \hg + \rot \Es\\
\mep\p_t \eg - \rot \Hs\\
\end{pmatrix} = {0} \qquad &\mbox{in}\ Q^-_T, \label{36'b}
\\
\varphi_t={v}_{N}-\hat{v}_2 \varphi_2-\hat{v}_3\varphi_3 + \varphi\,\partial_1\hat{v}_{N} \qquad &
\\
{q}=\hat{\mathcal{H}}\cdot{\mathcal{H}}-\hat\Ec\cdot\Ec -  [ \partial_1\hat{q}] \varphi \qquad & \label{35'd}
\\
\Es_2 =\mep\p_t(\varphi\hat{\Hc}_3) - \p_2(\varphi\hat{\Ec}_1) \qquad &
\label{37'}
\\
\Es_3 = -\mep\p_t(\varphi\hat{\Hc}_2)-\p_3(\varphi\hat{\Ec}_1) \qquad &\mbox{on}\ \omega_T, \label{37'bis}
\\
({U},{\Wc},\varphi )=0\qquad & \mbox{for}\ t<0 ,\label{38'f}
\end{align}
\end{subequations}
and solutions should satisfy
\begin{equation}
{\rm div}\,{h}=0\qquad\mbox{in}\ Q^+_T,
\label{93}
\end{equation}
\begin{equation}
{H}_{N}=\hat{H}_2\varphi_2 +\hat{H}_3\varphi_3 -
\varphi\,\partial_1\hat{H}_{N}\quad\mbox{on}\ \omega_T.
\label{95}
\end{equation}
All the notations here for $U$ and $\Wc$ (e.g., $h$, $\Hs$, $\hg$, etc.) are analogous to the corresponding ones for $\dot{U}$ and $\dot{\Wc}$ introduced above.
		
\subsection{An equivalent formulation of \eqref{34'} }\label{equiva}
In the following analysis it is convenient to make use of different ``plasma'' variables and an equivalent form of equations \eqref{34'a}.
With the usual notation, we define the matrix
\begin{equation*}
\begin{array}{ll}
\hat \eta=\begin{pmatrix}
 1&-\hat\Psi_2 &-\hat\Psi_3 \\
0 &1+\hat\Psi_1&0\\
0&0&1+\hat\Psi_1
\end{pmatrix}.
\end{array}
\end{equation*}
It follows that
\begin{equation}
\begin{array}{ll}\label{defcalU}
{u}=({v}_{N},{v}_2(1+\hat\Psi_1),{v}_3(1+\hat\Psi_1))=\hat \eta\, v, \\
{h}=({H}_{N},{H}_2(1+\hat\Psi_1),{H}_3(1+\hat\Psi_1))=\hat\eta \,H.
\end{array}
\end{equation}
Multiplying \eqref{34'a} on the left side by the matrix
\begin{equation*}
\begin{array}{ll}
\hat R=\begin{pmatrix}
 1&\underline 0&\underline 0&0 \\
 \underline0^T&\hat \eta&0_3& \underline 0^T\\
 \underline 0^T&0_3&\hat \eta&\underline 0^T\\
 0&\underline 0^T&\underline 0^T&1
\end{pmatrix},
\end{array}
\end{equation*}
after some calculations we get the symmetric hyperbolic system for the new vector of unknowns $\mathcal{U}=(q,u,h,S)$
(compare with \eqref{3'}, \eqref{34'a}):
\begin{equation}\begin{split}
\label{34'''}
&(1+\hat\Psi_1)
\left(\begin{matrix}
{\hat\rho_p/\hat\rho}&\underline
0&-({\hat\rho_p/\hat\rho})\hat h &0 \\
\underline 0^T&\hat\rho
\hat a_0&0_3&\underline 0^T\\
-({\hat\rho_p/\hat\rho})\hat h^T&0_3&\hat a_0 +({\hat\rho_p/\hat\rho})\hat h\otimes\hat h&\underline 0^T\\
0&\underline 0&\underline 0&1
\end{matrix}\right)\dt
\left(\begin{matrix}
q \\ u \\
h\\S \end{matrix}\right)\\
\\
& \qquad +
\left( \begin{matrix}
0&\nabla\cdot&\underline 0&0\\
\nabla&0_3&0_3 &\underline 0^T\\
\underline 0^T&0_3 &0_3&\underline 0^T\\
0&\underline 0&\underline 0&0
\end{matrix}\right)
\left(\begin{matrix}q \\ u \\ h\\S \end{matrix}\right)
\\
 \\
& +
(1+\hat\Psi_1)
\left( \begin{matrix}
(\hat\rho_p/\hat\rho)
\hat w \cdot\nabla&\nabla\cdot&-({\hat\rho_p/\hat\rho})\hat h\hat w \cdot\nabla&0\\
\nabla&\hat\rho \hat a_0\hat w \cdot\nabla&-\hat a_0\hat h \cdot\nabla &\underline 0^T\\
-({\hat\rho_p/\hat\rho})\hat h^T \hat w \cdot\nabla&-\hat a_0\hat h \cdot\nabla &(\hat a_0 +({\hat\rho_p/\hat\rho})\hat h\otimes\hat h)
\hat w \cdot\nabla&\underline 0^T\\
0&\underline 0&\underline 0&\hat w \cdot\nabla
\end{matrix}\right)
\left(\begin{matrix}q \\ u \\ h\\S \end{matrix}\right)\\ \\
&\qquad +\hat{\mathcal{C}}'\mathcal{U}=\mathcal{F}\,,
\end{split}
\end{equation}
where $\hat a_0$ is the symmetric and positive definite matrix
$$
\hat a_0 =(\hat \eta^{-1})^T\hat \eta^{-1},$$
$\hat{\mathcal{C}}'$ is a new zero-order term (a matrix whose precise form has no importance) and where we have set
$
\mathcal{F}=(1+\hat\Psi_1)  \hat R
F.$
We write system \eqref{34'''} in compact form as
\begin{equation}
\begin{array}{ll}
\displaystyle \hat{\mathcal{A}}_0\partial_t{\mathcal{U}}+\sum_{j=1}^{3}(\hat{\mathcal{A}}_j+E_{1j+1})\partial_j{\mathcal{U}}+
\hat{\mathcal C}'{\mathcal{U}}=\mathcal{F} ,\label{73}
\end{array}
\end{equation}
where $E_{1j+1}$ denotes the $8\times8$ matrix with $0$ entries, exception given for the $(1,j+1)$ and $(j+1,1)$ entries, which assume value~$1$.
\\
The formulation \eqref{73} has the advantage of the form of the boundary matrix of the system $\hat{\mathcal{A}}_1+{{E}}_{12}$, where
\begin{equation}
\label{a10}
\hat{\mathcal{A}}_1=0 \qquad\mbox{on }\omega_T,
\end{equation}
because $\hat w_1=\hat h_1=0$, and ${{E}}_{12}$ is a constant matrix.
Thus system \eqref{73} is symmetric hyperbolic with characteristic boundary of constant multiplicity (see \cite{rauch85,secchi95,secchi96} for maximally dissipative boundary conditions). Thus, the final form of our reduced linearized problem is
\begin{subequations}\label{34'new}
\begin{align}
\hat{\mathcal A}_0\partial_t{\mathcal U}+\sum_{j=1}^{3}(\hat{\mathcal A}_j+ E_{1 j+1})\partial_j{\mathcal U}+
\hat{\mathcal C'}{\mathcal U}=\mathcal F \qquad &\mbox{in}\ Q^+_T,\label{34'anew}
\\
\begin{pmatrix}
\mep\p_t \hg + \rot \Es\\
\mep\p_t \eg - \rot \Hs
\end{pmatrix} = {0} \qquad &\mbox{in}\ Q^-_T, \label{36'bnew}
\\
\varphi_t={v}_{N}-\hat{v}_2 \varphi_2-\hat{v}_3\varphi_3 + \varphi\,\partial_1\hat{v}_{N} , \qquad &\label{37'cnew}
\\
{q}=\hat{\mathcal{H}}\cdot{\mathcal{H}}-\hat\Ec\cdot\Ec -  [ \partial_1\hat{q}] \varphi , \qquad & \label{37'dnew}
\\
\Es_2 =\mep\p_t(\varphi\hat{\Hc}_3) - \p_2(\varphi\hat{\Ec}_1) , \qquad & \label{37'new}
\\
\Es_3 = -\mep\p_t(\varphi\hat{\Hc}_2)-\p_3(\varphi\hat{\Ec}_1) \qquad &\mbox{on}\ \omega_T, \label{37'bisnew}
\\
({\mathcal U},{\Wc},\varphi )=0\qquad & \mbox{for}\ t<0.\label{38'fnew}
\end{align}
\end{subequations}

The solutions $(\mathcal{U}, \Wc)$ to problem \eqref{34'new} satisfy
\begin{eqnarray}
{\rm div}\,{h}=0\qquad&\mbox{in}\ Q^+_T,
\label{84}
\\
{\rm div}\,{\mathfrak{h}}=0,\quad {\rm div}\,{\mathfrak{e}}=0\qquad&\mbox{in}\ Q^-_T,
\label{85}
\\
{h}_{1}=\hat{H}_2\varphi_2 +\hat{H}_3\varphi_3 -
\varphi\partial_1\hat{H}_{N},\qquad&
\label{86}
\\
{\mathfrak{h}}_{1} =\partial_2\bigl(\hat{\mathcal{H}}_2\varphi \bigr) +\partial_3\bigl(\hat{\mathcal{H}}_3\varphi \bigr)\qquad &\mbox{on}\ \omega_T\, ,
\label{87}
\end{eqnarray}
because \eqref{84}--\eqref{87} are just restrictions on the initial data which are automatically satisfied in view of \eqref{38'fnew}.
For instance, equations \eqref{85} trivially follow from \eqref{36'bnew} and \eqref{38'fnew}. Moreover, condition \eqref{87} is obtained by considering equation $\mep\p_t\hg_1+\p_2\Es_3-\p_3\Es_2=0$ in \eqref{36'bnew} at $x_1=0$ and taking into account \eqref{37'new}, \eqref{37'bisnew}.


System~\eqref{36'bnew} can be written in terms of $\Ws=(\Hs,\Es)\tm$ if~$|\hat \Psi_t|<{\mep^{-1}}$, i.e. the matrix~$J$ in~\eqref{eq:J} is invertible. Similarly to Hypothesis~\ref{hyp:invertibility0}, we have the following assumption involving the plasma normal speed at the boundary of the basic state.
\begin{hypothesis}\label{hyp:invertibility}
{We assume that}~$\hat v_N$ is sufficiently small {with respect to~$\mep^{-1}$} on~{$\omega_T$}, namely
\begin{equation}\label{eq:smallvn}
\frac1{\sqrt{2\pi}}\,\|\hat v_N(t,0,\cdot)\|_{H^{\frac32}(\R^2)}< {\mep^{-1}}\,.
\end{equation}
\end{hypothesis}
By virtue of~\eqref{eq:Phitcontrol} in correspondence of~$\hat\Psi$ and~$\hat\varphi$, {from Hypothesis~\ref{hyp:invertibility}} it follows that $\sup |\mep\hat\Psi_t(t,x)|<1$ in $[0,T]\times\R^3$, and~$J$ (where we clearly replace~$\Psi$ by~$\hat\Psi$) becomes invertible. We also define~$J_1$ as in~\eqref{eq:J1}, but replacing~$\Psi$ by~$\hat\Psi$.

\medskip

{Since} the basic state satisfies~\eqref{eq:smallvn}, we could write~\eqref{36'bnew} as a symmetric hyperbolic system:
\begin{equation}\label{newmaxwell}
\mep B_0\partial_t\Ws+\sum_{j=1}^3B_j\partial_j\Ws +\mep {B}_4\Ws=0,
\end{equation}
where
$B_0,B_4$ are as in~\eqref{eq:B0}, replacing $\eta$ by~$\hat\eta$, as usual. 
Let us prove that~$B_0>0$. If we define
\begin{equation}\label{eq:hatN}
\hat N = (1, -\hat\Psi_2,-\hat\Psi_3)\,,
\end{equation}
then the characteristic polynomial of~$B_0$ is given by the square of
\begin{align*} &\tau^3 - (|\hat N|^2 +2(\hat\Psi_1+1)^2 - {\mep^2}\hat\Psi_t^2) \tau^2 \\
&\qquad + (\hat\Psi_1+1)^{2}\,(1-{\mep^2}\hat\Psi_t^2)\,(|\hat N|^2+(1+\hat\Psi_1)^2+1) \tau  - (1+\hat\Psi_1)^4\,(1-{\mep^2}\hat\Psi_t^2)^2\,, \end{align*}
hence all the eigenvalues are positive, by virtue of Descartes' sign rule, and using~$\mep |\hat\Psi_t|<1\leq |\hat N|$.

We observe that (recalling that $\hat\Psi_1=0$ on $\omega_T$, in particular $\Hs_1=\Hc_1$ and $\Es_1=\Ec_1$) on $\omega_T$:
\begin{equation}
\begin{array}{ll}\label{nuovebc}
 \hat{\Hc}\cdot\Hc = \hat\hg_1\Hc_1+\hat\Hc_2(\Hc_2+\hat\varphi_2\Hc_1)+\hat\Hc_3(\Hc_3+\hat\varphi_3\Hc_1)\\
\qquad =\hat\hg\cdot\Hs +\mep\hat\varphi_t(-\hat\Hc_2\Ec_3+\hat\Hc_3\Ec_2),\\
\\
\hat\Ec\cdot \Ec
= \hat\Ec_1\eg_1+(\hat\Ec_2+\hat\varphi_2\hat\Ec_1)\Ec_2+(\hat\Ec_3+\hat\varphi_3\hat\Ec_1)\Ec_3
=\hat\Es\cdot\eg +\mep\hat\varphi_t(\hat\Hc_3\Ec_2-\hat\Hc_2\Ec_3).
\end{array}
\end{equation}
Thus we may replace in~\eqref{37'dnew}:
\begin{equation}\label{eq:hHEe}
\hat{\mathcal{H}}\cdot{\mathcal{H}}-\hat\Ec\cdot\Ec = \hat\hg\cdot\Hs-\hat\Es\cdot\eg \equiv \hat\hg_2 \Hs_2+\hat\hg_3\Hs_3-\hat\Es_1\eg_1\,,
\end{equation}
since $\hat\hg_1=\hat\Es_2=\hat\Es_3=0$ on $\omega_T$.



\begin{remark}
The invertible part of the boundary matrix of a system allows to control the trace at the boundary of the so-called noncharacteristic component of the vector solution. Thus, with system \eqref{34'anew} (whose boundary matrix is $-E_{12}$, because of \eqref{a10}), we have the control of $q,u_1$ at the boundary; therefore the components of ${\mathcal{U}}$ appearing in the boundary conditions {\eqref{37'cnew}, \eqref{37'dnew}} are well defined.
\\
The same holds true for \eqref{36'bnew} where we can get the control of $\Hs_2,\Hs_3, \Es_2,\Es_3$, in particular. The control of $\eg_1$ (which appears in \eqref{eq:hHEe}) is not given by the system \eqref{36'bnew}, but by the constraint \eqref{85}. 
\end{remark}

Before studying problem \eqref{34'new}, we should be sure that the number of boundary conditions is in agreement with the number of incoming characteristics for the hyperbolic systems in \eqref{34'new}. Since one of the four boundary conditions \eqref{37'cnew}--\eqref{38'fnew} is needed for determining the function $\varphi (t,x')$, the total number of ``incoming'' characteristics should be three. Let us check that this is true.
\begin{proposition}\label{nrbc}
System \eqref{34'anew} has one incoming characteristic for the boundary $\omega_T$ of the domain $Q_T^+$. System \eqref{36'bnew} has two incoming characteristics for the boundary $\omega_T$ of the domain $Q_T^-$.
\end{proposition}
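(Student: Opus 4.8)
The plan is to reduce the statement to an inertia count for the boundary matrices of the two systems at $\{x_1=0\}$. Recall that for a symmetric hyperbolic system $A_0\partial_t+A_1\partial_1+A_2\partial_2+A_3\partial_3$ (plus zero-order terms) with $A_0>0$, the characteristic speeds in the $x_1$-direction are the eigenvalues of $A_0^{-1}A_1$, and a mode with speed $\lambda$ travels into the interior of $\{x_1>0\}$ precisely when $\lambda>0$, and into the interior of $\{x_1<0\}$ precisely when $\lambda<0$. Since $A_0>0$, the matrix $A_0^{-1}A_1$ is similar (by conjugation with $A_0^{1/2}$) to the symmetric matrix $A_0^{-1/2}A_1A_0^{-1/2}$, which is congruent to $A_1$; hence by Sylvester's law of inertia the numbers of positive, negative and zero eigenvalues of $A_0^{-1}A_1$ coincide with those of the symmetric boundary matrix $A_1$. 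Thus I would count the incoming characteristics as the positive eigenvalues of the boundary matrix on the plasma side $Q^+_T=\{x_1>0\}$ and as the negative eigenvalues on the vacuum side $Q^-_T=\{x_1<0\}$, the vanishing eigenvalues accounting for the (constant-multiplicity) characteristic part of $\omega_T$.

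For the plasma system \eqref{34'anew} the boundary matrix is $\hat{\mathcal A}_1+E_{12}$; by \eqref{a10} we have $\hat{\mathcal A}_1=0$ on $\omega_T$, so on the boundary it reduces to the constant symmetric matrix $E_{12}$. The only nontrivial block of $E_{12}$ is $\bigl(\begin{smallmatrix}0&1\\1&0\end{smallmatrix}\bigr)$, acting on the $(q,u_1)$ components with eigenvalues $\pm1$, while the remaining six eigenvalues vanish. Hence $E_{12}$ has exactly one positive eigenvalue, and the plasma system has a single incoming characteristic for $\omega_T$.

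For the vacuum system \eqref{36'bnew}, under Hypothesis \ref{hyp:invertibility} the matrix $J$ is invertible, so \eqref{36'bnew} is equivalent to the symmetric hyperbolic system \eqref{newmaxwell}, namely $\mep B_0\partial_t\Ws+\sum_{j=1}^3 B_j\partial_j\Ws+\mep B_4\Ws=0$, whose time matrix $\mep B_0$ is positive definite (as shown above via Descartes' rule). Passing from the $\Wc$-formulation to \eqref{newmaxwell} amounts to the change of unknown by the invertible $J$ together with left-multiplication by invertible matrices, which leaves the number of incoming characteristics unchanged; it therefore suffices to examine the constant boundary matrix $B_1$. Since $B_1=\bigl(\begin{smallmatrix}O_3&B_1'\\ (B_1')\tm&O_3\end{smallmatrix}\bigr)$, its nonzero eigenvalues are $\pm$ the singular values of $B_1'$; from $(B_1')\tm B_1'=\mathrm{diag}(0,1,1)$ these singular values are $0,1,1$, so the spectrum of $B_1$ is $\{+1,+1,-1,-1,0,0\}$. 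Thus $B_1$ has exactly two negative eigenvalues and the vacuum system has two incoming characteristics for $\omega_T$.

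The delicate points here are bookkeeping rather than analysis. One must fix once and for all the orientation convention (positive eigenvalues for the half-space $\{x_1>0\}$, negative for $\{x_1<0\}$), and one must be sure that the equivalence \eqref{36'bnew}$\,\Leftrightarrow\,$\eqref{newmaxwell}, which rests on the invertibility of $J$ guaranteed by Hypothesis \ref{hyp:invertibility}, genuinely preserves the characteristic count, so that the clean constant matrix $B_1$ may legitimately replace the $\hat\Psi$-dependent boundary matrix of the $\Wc$-formulation (whose divergence-correction terms also contribute to the $\partial_1$-symbol). Granting this, the totals $1$ and $2$ add up to the three incoming characteristics demanded by the boundary conditions \eqref{37'cnew}--\eqref{37'bisnew} once one condition is reserved for determining the front $\varphi$.
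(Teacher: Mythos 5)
Your proof is correct and follows essentially the same route as the paper's: on the plasma side the boundary matrix reduces via \eqref{a10} to $\pm E_{12}$ (the sign depending only on the normal-orientation convention), giving one incoming mode, and on the vacuum side one counts the two negative eigenvalues of the constant matrix $B_1$ in the symmetric hyperbolic $\Ws$-formulation \eqref{newmaxwell}. The additional bookkeeping you supply (the Sylvester inertia argument and the computation of the spectrum of $B_1$ through the singular values of $B_1'$) merely makes explicit what the paper's short proof leaves implicit.
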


\begin{proof}
Consider first system \eqref{34'anew}. In view of~\eqref{a10}, the boundary matrix on $\omega_T$ is $-{E}_{12}$, which has one negative (incoming in the domain $Q_T^+$) and one positive eigenvalue, while all other eigenvalues are zero.
\\
Now consider system \eqref{36'bnew}. The boundary matrix $B_1$ (see~\eqref{eq:Bmatrix}) has eigenvalues~$\lambda_\pm=\pm1$, $\lambda_0=0$, each one with multiplicity~$2$. Thus, system \eqref{36'bnew} has indeed two incoming characteristics in the domain $Q_T^-$. 
\end{proof}




\section{Function Spaces}\label{fs}
Now we introduce the main function spaces to be used in the following.
Let us denote
\begin{equation}
\begin{array}{ll}\label{defQ}
Q^\pm\doteq  \R_t\times\Omega^\pm,\quad \omega\doteq \R_t\times\Gamma.
\end{array}
\end{equation}

\subsection{Weighted Sobolev spaces}
For $\gamma\ge 1$ and $s\in\mathbb{R}$, we set
\begin{equation*}\label{weightfcts}
\lambda^{s,\gamma}(\xi)\doteq (\gamma^2+|\xi|^2)^{s/2}
\end{equation*}
and, in particular, $\lambda^{s}\doteq \lambda^{s,1}$.
\newline
Throughout the paper, for real $\gamma\ge 1$ and $n\ge2$, $H^s_{\gamma}(\mathbb{R}^n)$ will denote the Sobolev space of order $s$, equipped with the $\gamma-$depending norm $||\cdot||_{s,\gamma}$ defined by
\begin{equation}\label{normagamma}
||u||^2_{s,\gamma}\doteq (2\pi)^{-n}\int_{\mathbb{R}^n}\lambda^{2s,\gamma}(\xi)|\hat{u}(\xi)|^2d\xi\,,
\end{equation}
$\hat{u}$ being the Fourier transform of $u$. The norms defined by \eqref{normagamma}, with different values of the parameter $\gamma$, are equivalent each other. For $\gamma=1$ we set for brevity $||\cdot||_{s}\doteq ||\cdot||_{s,1}$ (and, accordingly, the standard Sobolev space $H^s(\mathbb{R}^n)\doteq H^s_{1}(\mathbb{R}^n)$).
For $s\in\mathbb{N}$, the norm in \eqref{normagamma} turns to be equivalent, {\it uniformly with respect to} $\gamma$, to the norm $||\cdot||_{H^s_{\gamma}(\mathbb{R}^n)}$ defined by
\begin{equation*}\label{derivate}
||u||^2_{H^s_{\gamma}(\mathbb{R}^n)}\doteq \sum\limits_{|\alpha|\le s}\gamma^{2(s-|\alpha|)}||\partial^{\alpha}u||^2_{L^2(\mathbb{R}^n)}\,,
\end{equation*}
{where we write $\partial^{\alpha}=\partial^{\alpha_1}_1\dots\partial^{\alpha_n}_n$ for the usual partial derivative corresponding to a multi-index $\alpha\in\mathbb{N}^n$.}

For functions defined over $Q^-_T$ we will consider the weighted Sobolev spaces $H^m_{\gamma}(Q^-_T)$ equipped with the $\gamma-$depending norm
\begin{equation*}\label{derivate2}
||u||^2_{H^m_{\gamma}(Q^-_T)}\doteq \sum\limits_{|\alpha|\le m}\gamma^{2(m-|\alpha|)}||\partial^{\alpha}u||^2_{L^2(Q^-_T)}\,.
\end{equation*}
Similar weighted Sobolev spaces will be considered for functions defined on $Q^-$.


\subsection{Conormal Sobolev spaces}
Let us introduce some classes of function spaces of Sobolev type, defined over the half-space $Q^+_T$.
For $j=0,\dots,3$, we set
$$
Z_0=\dt,\quad Z_1\doteq \sigma (x_1)\partial_1\,,\quad Z_j\doteq \partial_j\,,\,\,{\rm for}\,\,j= 2,3\,,
$$
where $\sigma (x_1)\in C^{\infty}(\mathbb{R}_+)$ is
a monotone increasing function such that $\sigma (x_1)=x_1$ in some neighborhood of
the origin and $\sigma (x_1)=1$ for $x_1$ large enough.
Then, for every multi-index $\alpha=(\alpha_0,\dots,\alpha_3)\in\mathbb{N}^4$, the {\it conormal} derivative $Z^{\alpha}$ is defined by
$$
Z^{\alpha}\doteq Z_0^{\alpha_0}\dots Z^{\alpha_3}_3\, .
$$
Given an integer $m\geq 1$, the {\it conormal Sobolev space} $H^m_{\tan}(Q^+_T)$ is defined as the set of functions $u\in L^2(Q^+_T)$ such that
$Z^\alpha u\in  L^2(Q^+_T)$, for all multi-indices $\alpha$ with $|\alpha|\le m$ (see \cite{moseBVP,moseIBVP}). Agreeing with the notations set for the usual Sobolev spaces, for $\gamma\ge 1$, $H^m_{\tan,\gamma}(Q^+_T)$ will denote the conormal space of order $m$ equipped with the $\gamma-$depending norm
\begin{equation}\label{normaconormale}
||u||^2_{H^{m}_{\tan,\gamma}(Q^+_T)}\doteq \sum\limits_{|\alpha|\le m}\gamma^{2(m-|\alpha|)}||Z^{\alpha}u||^2_{L^2(Q^+_T)}\,
\end{equation}
\noindent
and we have $H^m_{\tan}(Q^+_T)\doteq H^m_{\tan,1}(Q^+_T)$. Similar conormal Sobolev spaces with $\gamma$-depending norms will be considered for functions defined on $Q^+$.

We will use the same notation for spaces of scalar and vector-valued functions.

\section{The main result}\label{mainresult}

We are now in a position to state the main result of the paper. Recall that $\mathcal{U}=(q,u,h,S)$, where $u$ and $h$ were defined in \eqref{defcalU}.

%
\begin{theorem}\label{main}
Let $T>0$. Let the basic state \eqref{21} satisfies assumptions \eqref{22}--\eqref{27} and define
\begin{equation}\label{eq:mu}
\hat\mu\doteq  (\hat\Ec_1-\mep\hat v_3\hat\Hc_2+\mep\hat v_2\hat\Hc_3)_{|x_1=0}\,.
\end{equation}
We assume that the plasma velocity of the basic state at the boundary is smaller than the light speed at the boundary, namely \begin{equation}\label{hyp:velocity}
|\hat v|{, |\hat v_N|}<{\mep^{-1}} \qquad \mbox{on  }\omega_T,
\end{equation}
and assume also
\begin{equation}
|\hat{H} \times \hat{\mathcal{H}}|\geq \delta > 0 \qquad \mbox{on  }\omega_T,\label{41}
\end{equation}
where $\delta$ is an arbitrary fixed constant. Then there exists a constant~$\mu^*$ and there exists $\gamma_0\ge1$ such that, for all $|\hat\mu|\leq\mu^*$,  $\gamma\ge\gamma_0$ and for all $\mathcal{F}_\gamma \in H^1_{\tan,\gamma}(Q^+_T)$, vanishing in the past, namely for $t<0$, any solution $(\mathcal{U}_\gamma,\Wc_\gamma,\varphi_\gamma)\in H^1_{\tan,\gamma}(Q^+_T)\times H^1_{\gamma}(Q^-_T)\times H^1_\gamma(\omega_T)$ to problem \eqref{34'new}, with trace $(q_\gamma,u_{1\gamma},h_{1\gamma})|_{\omega_T}\in {H^{1/2}_\gamma(\omega_T)}$, 
obeys the a priori estimate
\begin{multline}
\gamma\left(\|\mathcal{U}_\gamma\|^2_{H^1_{\tan,\gamma}(Q^+_T)}+\|\Wc_\gamma\|^2_{H^{1}_\gamma(Q^-_T)}
+\|(q_\gamma,u_{1\gamma},h_{1\gamma})|_{\omega_T}\|^2_{H^{1/2}_\gamma(\omega_T)}\right)
\\
+\gamma^2\|\varphi_\gamma\|^2_{H^1_\gamma(\omega_T)}
\leq  \frac{C}{\gamma}\|\mathcal{F}_\gamma\|^2_{H^1_{\tan,\gamma}(Q^+_T)},
\label{54}
\end{multline}
where we have set $\mathcal{U}_\gamma=e^{-\gamma t}\,\mathcal{U}, \mathcal{H}_\gamma=e^{-\gamma t}\, \mathcal{H}, \varphi_\gamma= e^{-\gamma t}\, \varphi$
and so on. Here $C=C(\kappa,T,\delta)>0$ is a constant independent of the data $\mathcal{F}$ and $\gamma$.
\label{t1}
\end{theorem}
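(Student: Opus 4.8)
The plan is to prove \eqref{54} by the energy method applied to the exponentially weighted unknowns, with the entire difficulty concentrated in a single boundary integral on $\omega_T$ that is controlled through the symmetric reformulation \eqref{newmaxwell} of the vacuum system together with the stability condition \eqref{41}. First I would pass to the weighted variables $\mathcal U_\gamma=e^{-\gamma t}\mathcal U$, $\Wc_\gamma=e^{-\gamma t}\Wc$, $\varphi_\gamma=e^{-\gamma t}\varphi$, under which \eqref{34'anew} and \eqref{36'bnew} acquire the coercive zeroth-order terms $\gamma\hat{\mathcal A}_0\mathcal U_\gamma$ and $\gamma\mep B_0\Ws_\gamma$ (using \eqref{newmaxwell}, legitimate by Hypothesis~\ref{hyp:invertibility}), while the boundary conditions \eqref{37'cnew}--\eqref{37'bisnew} transform into themselves with $\p_t$ replaced by $\p_t+\gamma$; since the data vanish for $t<0$ by \eqref{38'fnew}, no initial term appears. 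Taking the scalar product of the weighted plasma system with $\mathcal U_\gamma$ over $Q^+_T$ and of the weighted vacuum system with $\Ws_\gamma$ over $Q^-_T$, and using the symmetry of $\hat{\mathcal A}_j$ and $B_j$, produces on the left the coercive volume terms $\gamma\|\mathcal U_\gamma\|^2_{L^2(Q^+_T)}$ and $\gamma\|\Ws_\gamma\|^2_{L^2(Q^-_T)}$ (the latter controlling $\gamma\|\Wc_\gamma\|^2$ because $B_0>0$ and $J,K$ are bounded and invertible), a boundary integral over $\omega_T$, and lower-order terms bounded by $C(\kappa)\|\cdot\|^2$ that are absorbed for $\gamma$ large.

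The heart of the matter is the boundary integral. On $\omega_T$ the plasma boundary matrix reduces to $E_{12}$ by \eqref{a10}, so the plasma contribution is governed by $(E_{12}\mathcal U_\gamma,\mathcal U_\gamma)=2\,q_\gamma u_{1\gamma}$, while the vacuum boundary matrix $B_1$ contributes $(B_1\Ws_\gamma,\Ws_\gamma)=2(\Es_{2\gamma}\Hs_{3\gamma}-\Es_{3\gamma}\Hs_{2\gamma})$ (the relative sign being fixed by the outward normals of $\Omega^\pm$). I would then substitute the boundary conditions: the pressure condition \eqref{37'dnew}, rewritten via \eqref{eq:hHEe}, replaces $q_\gamma$ by $\hat\hg_2\Hs_{2\gamma}+\hat\hg_3\Hs_{3\gamma}-\hat\Es_1\eg_{1\gamma}-[\partial_1\hat q]\varphi_\gamma$; the conditions \eqref{37'new}, \eqref{37'bisnew} replace $\Es_{2\gamma},\Es_{3\gamma}$ by $(\p_t+\gamma)$- and $\p_{2,3}$-derivatives of $\varphi_\gamma$ tested against the basic-state fields; and the front equation \eqref{37'cnew} trades the trace $u_{1\gamma}=v_{N\gamma}$ for $(\p_t+\gamma)\varphi_\gamma$ plus tangential derivatives of $\varphi_\gamma$. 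The trace of $\eg_{1\gamma}$ appearing in \eqref{eq:hHEe}, which the hyperbolic part does not supply, is recovered from the divergence constraint \eqref{85} as observed in the Remark.

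The decisive step is then the secondary symmetrization. Exploiting the freedom in symmetrizing the Maxwell system and the constraints \eqref{85}, I would arrange the vacuum boundary form so that, after the substitutions above, the combined plasma--vacuum boundary integral reduces---up to a perfect $\p_t$-derivative vanishing by \eqref{38'fnew}, and up to remainders absorbable for $\gamma\ge\gamma_0$---to a quadratic form in $\varphi_\gamma$ and its first-order tangential derivatives whose symbol is coercive in the tangential frequencies precisely when $|\hat H\times\hat{\mathcal H}|\ge\delta>0$. Thus condition \eqref{41} delivers the control of $\gamma^2\|\varphi_\gamma\|^2_{H^1_\gamma(\omega_T)}$ and, through the coercivity of the boundary form on the noncharacteristic components, of the traces $q_\gamma,u_{1\gamma}$ (with $h_{1\gamma}$ recovered from \eqref{86}) in $H^{1/2}_\gamma(\omega_T)$. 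The weak-field hypotheses $|\hat\mu|\le\mu^*$ and \eqref{hyp:velocity} are exactly what make this work: they ensure that the $\mep\hat\Psi_t$ and $\hat\Ec$ contributions enter only as small, absorbable perturbations, and they quantify the manner in which a sufficiently weak vacuum electric field precludes the ill-posedness found in \cite{trakhinin12}.

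Finally I would upgrade the $L^2$ bound to $H^1_{\tan}$ by applying the conormal operators $Z^\alpha$, $|\alpha|\le1$, to \eqref{34'anew} and all first-order derivatives to \eqref{36'bnew}: since each $Z_j$ is tangent to $\omega_T$ the boundary structure is preserved, the commutators $[Z^\alpha,\hat{\mathcal A}_j\p_j]$ and $[Z^\alpha,\hat{\mathcal C}']$ are of lower order with coefficients controlled by $\kappa$, and the full normal derivatives of $\Wc_\gamma$ in the vacuum are recovered from \eqref{36'bnew} together with the constraints \eqref{85} (yielding the full $H^1_\gamma(Q^-_T)$ norm rather than a merely conormal one). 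Repeating the boundary analysis at this differentiated level and choosing $\gamma_0$ large enough to absorb all commutator and lower-order terms into the coercive left-hand side produces \eqref{54}. I expect the main obstacle to be the coercivity analysis of the boundary quadratic form: identifying the correct secondary symmetrizer and showing, under \eqref{41} and $|\hat\mu|\le\mu^*$, that the indefinite contributions of $E_{12}$ and $B_1$ recombine into a form in $\varphi_\gamma$ that is controlled by the coercive interior and front terms is the crux on which the whole estimate---and the distinction from the ill-posed strong-field regime---rests.
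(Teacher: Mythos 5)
Your overall skeleton --- weighted unknowns, energy method on both systems, a secondary symmetrization of the vacuum part, recovery of normal derivatives from the equations and the constraints \eqref{85}, tangential differentiation, and absorption of the $\hat\mu$-- and $\mep\hat\Psi_t$--contributions by smallness --- matches the paper's strategy. But the step you yourself call the crux is not what the paper does, and as stated it is a genuine gap. You claim that, after substituting the boundary conditions, the combined boundary integral reduces to a quadratic form in $\varphi_\gamma$ and its first-order tangential derivatives \emph{whose symbol is coercive in the tangential frequencies precisely when} \eqref{41} holds. No such coercivity is established anywhere (your proposal gives no computation for it), and proving it would amount to verifying a uniform Kreiss--Lopatinski--type condition for this free-boundary problem --- which the paper explicitly states cannot be tested analytically. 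The paper's mechanism is different and weaker in exactly the way that makes it workable: the symmetrizer is pinned down by the specific choice $\vec\nu=\mep\hat v$ on $\omega$ (see \eqref{eq:nuv}, \eqref{eq:nu1}); with this choice, Lemma~\ref{lem:MWW} plus the boundary conditions \eqref{79'}--\eqref{82'} and the constraint \eqref{87} make the boundary form \eqref{eq:A} \emph{collapse}, as in \eqref{eq:Aphi}, into terms each carrying either the small factor $\hat\mu$ or the factor $\varphi$ itself. Nothing coercive survives on the boundary; these residual terms are treated as errors, by passing to volume integrals and integrating by parts (as in \eqref{unacaso}, \eqref{unacaso2}), and the $\hat\mu$-term additionally requires an integration by parts in $t,x'$ using the first component of the vacuum system to trade $\mep(\varphi_t+\gamma\varphi)\eg_1$ for $2\hat\mu\gamma\varphi\eg_1$ plus exact derivatives.

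Consequently you also misplace where \eqref{41} and the trace bounds actually come from. In the paper, \eqref{41} is used purely algebraically: under $\hat H_N=\hat{\Hc}_N=0$ it guarantees that the three relations \eqref{86}, \eqref{87}, \eqref{79'} can be solved for the space-time gradient $\nabla_{t,x'}\varphi_\gamma$ in terms of $h_{1\gamma},\mathfrak{h}_{1\gamma},u_{1\gamma},\varphi_\gamma$, giving \eqref{12a} and hence \eqref{gradphi}; this, not boundary coercivity, is what produces the $\gamma^2\|\varphi_\gamma\|^2_{H^1_\gamma(\omega_T)}$ term in \eqref{54} and what eliminates the factors $Z_\ell\varphi$ from the differentiated boundary terms \eqref{Al}. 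Likewise, the $H^{1/2}_\gamma$ control of $(q_\gamma,u_{1\gamma},h_{1\gamma})|_{\omega_T}$ and of $\Ws_\gamma|_{\omega_T}$ does not come from ``coercivity of the boundary form on the noncharacteristic components'' but from the separate hidden-regularity trace estimates of Lemma~\ref{interpol} (the analogue of Lemma 13 in \cite{SeTr}), which are then fed back into \eqref{H1}. Without the cancellation produced by $\vec\nu=\mep\hat v$, the algebraic resolution \eqref{12a}, and Lemma~\ref{interpol}, your plan has no route to close the estimate: the coercivity you postulate is the one thing this problem does not provide.
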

\begin{remark}\label{}
In \cite{SeTr}, where the pre-Maxwell dynamics in the vacuum side is considered, instead of the Maxwell equations, it is shown that the stability condition \eqref{41} is sufficient by itself for the well-posedness of the problem. Here we also need to impose \eqref{hyp:velocity} and the smallness condition $|\hat\mu|\leq\mu^*$. The present situation with more restrictive stability conditions is more similar to the relativistic case studied in \cite{trakhinin12}, with the difference that, in order to prevent violent instabilities, we don't need to assume that the plasma expands into the vacuum, as in \cite{trakhinin12}.

{Let us note as well that, in real problems, $\mep$ is very small if compared with the velocity of the basic state, so that \eqref{hyp:velocity} is always satisfied. Alternatively, given the basic state, we can choose $\bar{v}$ (see the definition of $\mep$) so that \eqref{hyp:velocity} holds.}

{Moreover, the smallness of $\mep$ says essentially that $\hat\mu$ is small provided that $\Ec_1$ be small, i.e. the electric field be sufficiently weak. Thus, when the electric field is weak, it is justified to neglect the displacement current and consider the classical plasma-vacuum non-relativistic model with pre-Maxwell dynamics. On the other hand, when the electric field is sufficiently strong, it is natural to expect that the plasma-vacuum interface problem could be ill-posed, see \cite{trakhinin12}. }

\end{remark}
\begin{remark}
{Independently on Hypothesis~\ref{hyp:invertibility},} assumption~\eqref{hyp:velocity} also {directly} implies that
\begin{equation}\label{eq:phitN}
\mep |\hat \varphi_t| < |\hat N| \,,
\end{equation}
where~$\hat N$ is as in~\eqref{eq:hatN}, {since~$\mep\,|\hat \varphi_t|=\mep\,|\hat v_N|\leq \mep\,|\hat v|\,|\hat N|<|\hat N|$.}
\end{remark}
%
%
%

\section{Proof of Theorem \ref{main}}\label{basic}

To prove the a priori estimate for problem~\eqref{34'new} given in Theorem~\ref{main}, we will extend our problem to the spaces~$Q^\pm, \omega$ given in~\eqref{defQ}.
\subsection{The extended boundary value problem}
Assuming that all coefficients and data appearing in \eqref{34'new} and~\eqref{newmaxwell} are extended for all times to the whole real line (for such standard procedure we refer the reader to \cite{lionsmagenes1,mosetre09}), let us consider the boundary value problem (recall the definition of $Q^\pm,\omega$ in \eqref{defQ})
\begin{subequations}\label {77'}
\begin{align}
\displaystyle \hat{\mathcal{A}}_0\partial_t{\mathcal{U}}+\sum_{j=1}^{3}(\hat{\mathcal{A}}_j+{{E}}_{1j+1})\partial_j{\mathcal{U}}+
\hat{\mathcal C}'{\mathcal{U}}=\mathcal{F} ,
 \qquad &\mbox{in}\ Q^+,\label{77'a}
\\
\mep{B}_0\partial_t\Ws+\sum_{j=1}^3B_j\partial_j\Ws +\mep{B}_4\Ws=0  \qquad &\mbox{in}\ Q^-, \label{78'}
\\
\varphi_t+\hat{v}_2\varphi_2+\hat{v}_3\varphi_3-
\varphi\partial_1\hat{v}_{N}-u_1 =0, \qquad&\label{79'} \\
{q} +  [ \partial_1\hat{q}]\varphi-\hat\hg\cdot\Hs+\hat\Es\cdot\eg=0,  \qquad&\label{80'}
\\
\Es_2 - \mep \p_t(\varphi\hat{\Hc}_3) + \p_2(\varphi\hat{\Ec}_1) = 0, \qquad&\label{81'} \\
\Es_3 + \mep \p_t(\varphi\hat{\Hc}_2) + \p_3(\varphi\hat{\Ec}_1) =0
 \qquad&\mbox{on}\ \omega, \label{82'}
\\
(\mathcal{U},\Ws,\varphi)=0\qquad &\mbox{for}\ t<0.\label{83g'}
\end{align}
\end{subequations}
Since problem \eqref{77'} looks similar to a corresponding one in relativistic MHD \cite{trakhinin12},
for the deduction of estimate \eqref{54} we use the same ideas as in \cite{trakhinin12}.  As in \cite{SeTr,trakhinin12} we will need a secondary symmetrization of the transformed Maxwell equations in vacuum.

\subsection{ A secondary symmetrization}
In order to show how to get the secondary symmetrization, for the sake of simplicity we first consider a planar unperturbed interface, i.e. the case $\hat{\varphi}\equiv 0$. For this case {\eqref{36'bnew}, \eqref{85}} become
\begin{equation}
\mep \partial_t \Wc +\sum_{j=1}^3B_j \partial_j\Wc=0,
\label{13'}
\end{equation}
\begin{equation}
\dv\Hc=0,\quad \dv\Ec=0,
\label{14}
\end{equation}
that is, the classical Maxwell system~\eqref{eq:Maxwell}.
\\
We write for system \eqref{13'} the following secondary symmetrization (for a similar secondary symmetrization of the Maxwell equations in vacuum see \cite{SeTr,trakhinin12}):
\begin{equation}
\mep \mathfrak{B}_0\partial_t\Wc  +\sum_{j=1}^3\mathfrak{B}_0B_j\partial_j\Wc +R_1{\rm div}\,\mathcal{H}
+R_2{\rm div}\, E
 =\mep \mathfrak{B}_0\partial_t\Wc +\sum_{j=1}^3\mathfrak{B}_j\partial_j\Wc=0,
\label{20'}
\end{equation}
where
\begin{equation}
\begin{array}{ll}\label{B0e}
\mathfrak{B}_0=\left(\begin{array}{cccccc}
1 & 0 & 0& 0 & \nu_3 & -\nu_2 \\
0 & 1 & 0& -\nu_3 & 0 & \nu_1 \\
0 & 0 & 1& \nu_2 & -\nu_1 & 0 \\
0 & -\nu_3 & \nu_2& 1 & 0 & 0 \\
\nu_3 & 0 & -\nu_1& 0 & 1 & 0 \\
-\nu_2 & \nu_1 & 0& 0 & 0 & 1
\end{array} \right),
\end{array}
\end{equation}
\begin{gather*}
\mathfrak{B}_1=
\left(\begin{array}{cccccc}
\nu_1 & \nu_2 & \nu_3& 0 & 0 & 0 \\
\nu_2 & -\nu_1 & 0& 0 & 0 & -1 \\
\nu_3 & 0 & -\nu_1& 0 & 1 & 0 \\
0 & 0 & 0& \nu_1 & \nu_2 & \nu_3 \\
0 & 0 & 1& \nu_2 & -\nu_1 & 0 \\
0 & -1 & 0& \nu_3 & 0 & -\nu_1
\end{array} \right),
\\
\mathfrak{B}_2=
\left(\begin{array}{cccccc}
-\nu_2 & \nu_1 & 0& 0 & 0 & 1 \\
\nu_1 & \nu_2 & \nu_3& 0 & 0 & 0 \\
0 & \nu_3 & -\nu_2& -1 & 0 & 0 \\
0 & 0 & -1& -\nu_2 & \nu_1 & 0 \\
0 & 0 & 0& \nu_1 & \nu_2 & \nu_3 \\
1 & 0 & 0& 0 & \nu_3 & -\nu_2
\end{array} \right),
\end{gather*}
\[
\mathfrak{B}_3=
\left(\begin{array}{cccccc}
-\nu_3 & 0 & \nu_1& 0 & -1 & 0 \\
0 & -\nu_3 & \nu_2& 1 & 0 & 0 \\
\nu_1 & \nu_2 & \nu_3& 0 & 0 & 0 \\
0 & 1 & 0& -\nu_3 & 0 & \nu_1 \\
-1 & 0 & 0& 0 & -\nu_3 & \nu_2 \\
0 & 0 & 0& \nu_1 & \nu_2 & \nu_3
\end{array} \right),
\quad
R_1=\left(\begin{array}{c} \nu_1 \\
\nu_2 \\
\nu_3 \\
0 \\
0 \\
0
\end{array} \right),\quad R_2=\left(\begin{array}{c}
0 \\
0 \\
0 \\
\nu_1 \\
\nu_2 \\
\nu_3
\end{array} \right).
\]
The arbitrary functions $\nu_i(t,x)$ will be chosen in appropriate way later on.
 It may be useful to notice that system \eqref{20'} can also be written as
 \begin{equation}
\begin{array}{ll}\label{secsym}
\displaystyle
(\mep\partial_t \mathcal{H}+\nabla\times \Ec) - \vec\nu\times
(\mep\partial_t \Ec-\nabla\times \mathcal{H})+
\vec\nu\,\dv\mathcal{H}=0,
\\
\\
\displaystyle
(\mep\partial_t \Ec-\nabla\times \mathcal{H}) + \vec\nu\times
(\mep\partial_t \mathcal{H}+\nabla\times \Ec)+
\vec\nu\, \dv\Ec=0,
\end{array}
\end{equation}
with the vector-function $\vec\nu = (\nu_1, \nu_2, \nu_3)$. The symmetric system \eqref{20'} (or \eqref{secsym}) is hyperbolic if $\mathfrak{B}_0>0$, i.e. for
\begin{equation}
\begin{array}{ll}\label{99}
|\vec\nu |<1.
\end{array}
\end{equation}
Since 
$$
\mbox{det}(\mathfrak{B}_1)=\nu_1^2\left(|\vec\nu|^2-1\right)^2,
$$
the boundary is noncharacteristic for system \eqref{20'} (or \eqref{secsym}) provided \eqref{99} and $\nu_1\not=0$ hold.


Consider now a nonplanar unperturbed interface, i.e., the general case when $\hat{\varphi}$ is not identically zero. Dealing with the variable~$\Wg$, we may write the system
\begin{equation}
{\mep M_0^\G\partial_t\Wg}+ \sum_{{j=1}}^3M_j^\G\partial_j\Wg+\mep M_4^\G\Wg=0,
\label{eq:Mgoth}
\end{equation}
where
\begin{equation}
\begin{array}{ll}\label{defmatricesgoth}
\ds M_j^\G=\frac{1}{(1+\hat\Psi_1)}\,K\mathfrak{B}_jK\tm \quad (j=0,2,3), \\
\ds M_1^\G=\frac{1}{1+\hat\Psi_1}\,K
\tilde{\mathfrak{B}}_1K\tm, \quad
\tilde{\mathfrak{B}}_1=\frac{1}{1+\hat\Psi_1}\Bigl(
\mathfrak{B}_1{-\mep\hat{\Psi}_t\mathfrak{B}_0}-\sum_{{k=2,3}}\hat{\Psi}_k\mathfrak{B}_k \Bigr),
\\
\ds {M}_4^\G=
K\left(\mep{\mathfrak{B}}_0\partial_t+\tilde{\mathfrak{B}}_1\partial_1+\mathfrak{B}_2 \partial_2+\mathfrak{B}_3 \partial_3+ \mep\mathfrak{B}_0{B}_4^\G\right)
\left( \frac{1}{1+\hat\Psi_1}\,K\tm \right).
\end{array}
\end{equation}
%
%

%
{Since}~$J$ is invertible (see {Hypothesis~\ref{hyp:invertibility}}), we may multiply~\eqref{eq:Mgoth} by
\[ \left((1+\hat\Psi_1)(K\tm)^{-1}J^{-1}\right)\tm \equiv \frac1{1-{\mep^2}\hat\Psi_t^2}\,J_1\tm K^{-1}\,, \]
on the lefthand side, obtaining the symmetric system
\begin{equation}
{M_0\partial_t\Ws} + \sum_{{j=1}}^3M_j\partial_j\Ws+ M_4\Ws=0,
\label{50"}
\end{equation}
where
\begin{equation}
\label{defmatrices}\begin{split}
&\ds M_j=\frac1{{(1+\hat\Psi_1)(1-\mep^2\hat\Psi_t^2)^2}}\,J_1\tm\mathfrak{B}_jJ_1 \quad (j=0,2,3), \\
&\ds M_1=\frac1{{(1+\hat\Psi_1)(1-\mep^2\hat\Psi_t^2)^2}}\,J_1\tm\tilde{\mathfrak{B}}_1J_1\,, 
\end{split}\end{equation}
and the precise expression of~$M_4$ is not important.
System \eqref{50"} originates from a linear combination of equations \eqref{newmaxwell} similar to \eqref{secsym}, namely from
\begin{equation}
\begin{array}{ll}\label{secsym2}
\displaystyle
(\mep\p_t\hg + \rot \Es) -
\hat\eta\left(\vec\nu\times\hat\eta^{-1}(\mep\p_t\eg - \rot \Hs)\right)+
\frac{\hat\eta\,\vec\nu}{1+\hat\Psi_1}\,\dv\hg=0\, ,
\\
\\
\displaystyle
(\mep\p_t\eg - \rot \Hs) +
\hat\eta\left(\vec\nu\times\hat\eta^{-1}(\mep\p_t\hg + \rot \Es)\right)+
\frac{\hat\eta\,\vec\nu}{1+\hat\Psi_1}\, \dv\eg=0\, .
\end{array}
\end{equation}
The equivalence of systems \eqref{36'bnew} and \eqref{secsym2} for every $\vec\nu\not=0$ follows as in~\cite{SeTr}. This is the same as the equivalence of \eqref{78'} and \eqref{50"}.
\begin{lemma}\label{equivalence}
Assume that systems \eqref{36'bnew} and \eqref{secsym2} have common initial data satisfying the constraints
$$
{\rm div}\,{\mathfrak{h}}=0,\quad {\rm div}\,{\mathfrak{e}}=0\qquad\mbox{in }\Omega^-\quad\mbox{for}\ t=0.
$$
Assuming that the corresponding Cauchy problems for \eqref{36'bnew} and \eqref{secsym2} have a unique classical solution on a time interval $[0,T]$, then these solutions coincide on $[0,T]$.
\end{lemma}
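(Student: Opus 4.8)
The plan is to exploit the assumed uniqueness of both Cauchy problems rather than to establish a two-sided algebraic equivalence: I will show that the solution of the original system \eqref{36'bnew} is automatically a solution of the symmetrized system \eqref{secsym2}, and then conclude that it must coincide with the unique solution of \eqref{secsym2}.

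First I would let $\Wc=(\hg,\eg)$ be the unique classical solution of \eqref{36'bnew} on $[0,T]$ carrying the prescribed initial data, which satisfies $\dv\hg=\dv\eg=0$ at $t=0$. The key preliminary step is to propagate the divergence constraints forward in time: applying $\dv$ to each of the two equations in \eqref{36'bnew} and using that the divergence of a curl vanishes, $\dv\rot\equiv0$, one gets $\mep\p_t(\dv\hg)=0$ and $\mep\p_t(\dv\eg)=0$ in $\Omega^-$. Hence $\dv\hg$ and $\dv\eg$ are independent of $t$ and, vanishing at $t=0$, they vanish throughout $[0,T]$.

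Next I would substitute the two relations $\mep\p_t\hg+\rot\Es=0$ and $\mep\p_t\eg-\rot\Hs=0$ --- that is, \eqref{36'bnew} itself --- together with $\dv\hg=\dv\eg=0$ into the left-hand sides of \eqref{secsym2}. Each correction term there is built either from the combination $\mep\p_t\eg-\rot\Hs$ (respectively $\mep\p_t\hg+\rot\Es$), crossed with $\vec\nu$ through $\hat\eta$, or from the divergences $\dv\hg$, $\dv\eg$; consequently every term collapses to zero and $\Wc$ is seen to be a classical solution of \eqref{secsym2} carrying the same initial data. Since by hypothesis the Cauchy problem for \eqref{secsym2} has a unique classical solution on $[0,T]$, that solution must equal $\Wc$, and the two solutions coincide.

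The routine verifications are elementary; the only point deserving care is the propagation of the divergence constraints, since it is precisely the vanishing of $\dv\hg$ and $\dv\eg$ that makes the extra $\vec\nu$-terms in \eqref{secsym2} disappear. I would remark that the converse inclusion --- that a solution of \eqref{secsym2} solves \eqref{36'bnew} --- would instead require inverting the algebraic coupling by $\vec\nu$ and would invoke the condition $|\vec\nu|<1$ from \eqref{99}; routing the argument through the uniqueness of the two Cauchy problems lets me avoid this and retain only the elementary forward direction.
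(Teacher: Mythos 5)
Your argument is correct, but it is a genuinely different route from the one the paper relies on. The paper's own proof is just a citation to \cite{SeTr}, where the equivalence is proved in the substantive direction: one takes the divergence of the \emph{symmetrized} system \eqref{secsym2}, in which the extra $\vec\nu$-terms now genuinely contribute, derives transport-type equations for $\dv\hg$ and $\dv\eg$, and shows by a Gronwall/energy argument (exactly in the spirit of the proof of Proposition \ref{p1} in Appendix \ref{computations}) that the constraints propagate; then the $\vec\nu$-terms drop, \eqref{secsym2} collapses to \eqref{36'bnew}, and coincidence follows from uniqueness for the original system. That direction requires inverting the algebraic $\vec\nu$-coupling and hence the condition \eqref{99}. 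You instead run only the elementary direction: for a solution of \eqref{36'bnew} the constraints propagate trivially, since $\mep\p_t(\dv\hg)=-\dv\rot\Es=0$ and $\mep\p_t(\dv\eg)=\dv\rot\Hs=0$, so this solution also solves \eqref{secsym2} pointwise, and you then invoke the assumed uniqueness for the Cauchy problem of \eqref{secsym2}. Under the hypotheses of the lemma, which explicitly grant uniqueness for both problems, this is logically complete, and it is shorter precisely because it never needs $|\vec\nu|<1$. What the two-sided argument buys, and yours does not, is the statement that the symmetrized system \emph{by itself} preserves the divergence constraints; this is the operationally important direction when the object in hand is a solution produced by the existence theory for \eqref{secsym2} and one must conclude that it solves the physical system. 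Two small points of care in your write-up: computing $\dv\rot\Es$ for a merely $C^1$ solution requires second derivatives, so either assume this mild extra smoothness or note that $\dv\rot\equiv 0$ holds distributionally, which suffices to get $\p_t\dv\hg=0$; and since these are problems on the half-space $Q^-_T$ rather than pure Cauchy problems, your appeal to uniqueness implicitly uses that the two formulations carry the same initial \emph{and} boundary data, which is indeed the setting of the lemma.
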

\begin{proof}
See \cite{SeTr}.
\end{proof}
Systems~\eqref{eq:Mgoth} and~\eqref{50"} are symmetric hyperbolic provided that~\eqref{99} holds. We compute
\begin{equation}
\begin{array}{ll}\label{determinant}
\mbox{det}(\mathfrak{B}_1)=\left(|\hat N|^2-{\mep^2}\hat\Psi_t^2\right)^2\left(\nu_1-\nu_2\hat\Psi_2 -\nu_3\hat\Psi_3-\mep\hat\Psi_t \right)^2\left(|\vec\nu|^2-1\right)^2 ,
\end{array}
\end{equation}
where the definition of~$\hat N$ is given in~\eqref{eq:hatN}. So the boundary is noncharacteristic if and only if \eqref{99} holds and $\nu_1\not=\nu_2\hat\varphi_2 +\nu_3\hat\varphi_3+\mep\hat\varphi_t$ at~$x_1=0$. Indeed, we recall that ${\mep^2}\hat\varphi_t^2<|\hat N|^2$, see~\eqref{eq:phitN}.
\\
In particular, the matrix~$M_1$ is of a special interest. We may write
\begin{equation}\label{eq:M1blocks}
M_1 = \frac1{(1+\hat\Psi_1)(1-{\mep^2}\hat\Psi_t^2)} \begin{pmatrix}
S & T \\
-T & S
\end{pmatrix}\,,
\end{equation}
where
{\tiny\begin{align*}
S
	 = \begin{pmatrix}
\frac{(\nu_1-\nu_2\,\hat\Psi_2-\nu_3\,\hat\Psi_3-\mep\hat\Psi_t) \,(|\hat N|^2-{\mep^2}\hat\Psi_t^2)}{(1+\hat\Psi_1)^2} & \nu_2\,\frac{|\hat N|^2-{\mep^2}\hat\Psi_t^2}{1+\hat\Psi_1}  & \nu_3\,\frac{\hat |N|^2-{\mep^2}\hat\Psi_t^2}{1+\hat\Psi_1} \\*[1em]
\nu_2\,\frac{|\hat N|^2-{\mep^2}\hat\Psi_t^2}{1+\hat\Psi_1} & -\nu_1-\nu_2\,\hat\Psi_2+\nu_3\,\hat\Psi_3+\mep\hat\Psi_t  & -(\nu_2\,\hat\Psi_3+\nu_3\,\hat\Psi_2)  \\*[1em]
\nu_3\,\frac{|\hat N|^2-{\mep^2}\hat\Psi_t^2}{1+\hat\Psi_1}  & -(\nu_2\,\hat\Psi_3+\nu_3\,\hat\Psi_2)  & -\nu_1+\nu_2\,\hat\Psi_2-\nu_3\,\hat\Psi_3+\mep\hat\Psi_t
\end{pmatrix}\,,\end{align*}}
\begin{align*}T
	= (1-\mep\nu_1\hat\Psi_t) B_1'\,.
\end{align*}
We fix
\begin{equation}\label{eq:nu1}
\nu_1 \doteq  \nu_2 \hat\varphi_2 + \nu_3\hat\varphi_3 + \mep \hat\varphi_t\,,
\end{equation}
so that on the boundary (we also recall $\hat\Psi_1=0$), we obtain
\begin{align}
\label{eq:S}
S(t,0,x')
	& = \begin{pmatrix}
0 & \nu_2\,(|\hat N|^2-{\mep^2}\hat\varphi_t^2)  & \nu_3\,(|\hat N|^2-{\mep^2}\hat\varphi_t^2) \\
\nu_2\,(|\hat N|^2-{\mep^2}\hat\varphi_t^2) & -2\nu_2\,\hat\varphi_2  & -(\nu_2\,\hat\varphi_3+\nu_3\,\hat\varphi_2)  \\
\nu_3\,(|\hat N|^2-{\mep^2}\hat\varphi_t^2)  & -(\nu_2\,\hat\varphi_3+\nu_3\,\hat\varphi_2)  & -2\nu_3\,\hat\varphi_3
\end{pmatrix}\,,\\
\label{eq:T}
T
	& = (1-{\mep^2}\hat\varphi_t^2-\mep\nu_2\hat\varphi_t\hat\varphi_2-\mep\nu_3\hat\varphi_t\hat\varphi_3) B_1'\,.
\end{align}
%
%
To prove estimate \eqref{54}, we need some computation.
\begin{lemma}\label{lem:MWW}
Let~$M_1$ be as in~\eqref{defmatrices}. Then
\begin{equation}\label{eq:WM1W}
\frac{1}{2}(M_1\Ws,\Ws)|_{\omega} = \Hs_3\Es_2-\Hs_2\Es_3 + \nu_2 (\Hs_2\hg_1+\Es_2\eg_1) + \nu_3 (\Hs_3\hg_1 + \Es_3\eg_1) \,.
\end{equation}
%
%
\end{lemma}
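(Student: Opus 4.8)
The plan is to trade the sans-serif variable $\Ws$ for the "tilde'' field $\tilde\Wc=(\tilde\Hc,\tilde\Ec)\tm$, for which the quadratic form of the secondary symmetrizer can be read off directly. First I would note that the two factors building $M_1$ in \eqref{defmatrices} collapse against $\Ws=J\tilde\Wc$: since $J_1=(1+\hat\Psi_1)(1-\mep^2\hat\Psi_t^2)J^{-1}$ by \eqref{eq:J1}, one has $J_1\Ws=(1+\hat\Psi_1)(1-\mep^2\hat\Psi_t^2)\,\tilde\Wc$, so that, using the symmetry of $\tilde{\mathfrak{B}}_1$,
\[
(M_1\Ws,\Ws)=\frac{(\tilde{\mathfrak{B}}_1 J_1\Ws,\,J_1\Ws)}{(1+\hat\Psi_1)(1-\mep^2\hat\Psi_t^2)^2}=(1+\hat\Psi_1)\,(\tilde{\mathfrak{B}}_1\tilde\Wc,\tilde\Wc).
\]
By the definition of $\tilde{\mathfrak{B}}_1$ in \eqref{defmatricesgoth} we have $(1+\hat\Psi_1)\tilde{\mathfrak{B}}_1=\mathfrak{B}_1-\mep\hat\Psi_t\mathfrak{B}_0-\hat\Psi_2\mathfrak{B}_2-\hat\Psi_3\mathfrak{B}_3$, hence, evaluating at $x_1=0$ (where $\hat\Psi_1=0$, $\hat\Psi_{2,3}=\hat\varphi_{2,3}$ and $\hat\Psi_t=\hat\varphi_t$),
\[
(M_1\Ws,\Ws)|_\omega=(\mathfrak{B}_1\tilde\Wc,\tilde\Wc)-\mep\hat\varphi_t(\mathfrak{B}_0\tilde\Wc,\tilde\Wc)-\hat\varphi_2(\mathfrak{B}_2\tilde\Wc,\tilde\Wc)-\hat\varphi_3(\mathfrak{B}_3\tilde\Wc,\tilde\Wc).
\]

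Next I would read the four quadratic forms straight off the explicit matrices \eqref{B0e}. Each is a quadratic polynomial in the six components of $\tilde\Wc$ whose coefficients are $\nu_1,\nu_2,\nu_3$ and products of the $\hat\varphi_j$; for instance $(\mathfrak{B}_0\tilde\Wc,\tilde\Wc)=|\tilde\Hc|^2+|\tilde\Ec|^2+2\vec\nu\cdot(\tilde\Hc\times\tilde\Ec)$, and the three $(\mathfrak{B}_j\tilde\Wc,\tilde\Wc)$ are analogous. Assembling the displayed linear combination and then inserting the gauge choice \eqref{eq:nu1}, $\nu_1=\nu_2\hat\varphi_2+\nu_3\hat\varphi_3+\mep\hat\varphi_t$, splits every $\nu_1$-term into three pieces: a $\nu$-free piece carrying the factor $\mep\hat\varphi_t$, and pieces proportional to $\nu_2$ and to $\nu_3$. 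Collecting everything by $1$, $\nu_2$, $\nu_3$ produces an expression of the form $(M_1\Ws,\Ws)|_\omega=P_0+\nu_2 P_2+\nu_3 P_3$, with $P_0,P_2,P_3$ free of $\nu$.

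Finally I would match $P_0,P_2,P_3$ with the right-hand side of \eqref{eq:WM1W} by re-expressing $\Hs,\Es$ and $\hg_1,\eg_1$ through $\tilde\Hc,\tilde\Ec$ on the boundary, recalling $\hat\Psi_1=0$ so that $\Hs_1=\tilde\Hc_1$, $\Hs_2=\tilde\Hc_2+\hat\varphi_2\tilde\Hc_1+\mep\hat\varphi_t\tilde\Ec_3$, $\hg_1=\tilde\Hc_1-\hat\varphi_2\tilde\Hc_2-\hat\varphi_3\tilde\Hc_3$, together with the $\Es,\eg$ analogues and $|\hat N|^2=1+\hat\varphi_2^2+\hat\varphi_3^2$. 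A direct expansion then gives $\tfrac12 P_0=\Hs_3\Es_2-\Hs_2\Es_3$, $\tfrac12 P_2=\Hs_2\hg_1+\Es_2\eg_1$, and $\tfrac12 P_3=\Hs_3\hg_1+\Es_3\eg_1$, the last being deduced from the second by the symmetry $2\leftrightarrow3$. Multiplying by $\tfrac12$ and summing is exactly the claimed identity \eqref{eq:WM1W}.

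The only real work lies in the bookkeeping of the last two steps, and the one genuinely subtle point is that the gauge \eqref{eq:nu1} must be used \emph{before} matching: its $\mep\hat\varphi_t$ contribution is precisely what converts the leftover $-\tfrac12\mep\hat\varphi_t(|\tilde\Hc|^2+|\tilde\Ec|^2)$ into the combination that reassembles into $\Hs_3\Es_2-\Hs_2\Es_3$, while its $\nu_2\hat\varphi_2$ and $\nu_3\hat\varphi_3$ parts feed the $\nu_2,\nu_3$ coefficients needed to complete $\Hs_2\hg_1+\Es_2\eg_1$ and $\Hs_3\hg_1+\Es_3\eg_1$. The same identity can alternatively be reached from the block form \eqref{eq:M1blocks}: antisymmetry of $T=(1-\mep\nu_1\hat\Psi_t)B_1'$ collapses the off-diagonal blocks to $2(T\Es,\Hs)$, while every entry of $S$ in \eqref{eq:S} carries a factor $\nu_2$ or $\nu_3$, so that $(S\Hs,\Hs)$ factors neatly as $2(\nu_2\Hs_2+\nu_3\Hs_3)(a\Hs_1-\hat\varphi_2\Hs_2-\hat\varphi_3\Hs_3)$ with $a=|\hat N|^2-\mep^2\hat\varphi_t^2$; however this route forces one to carry the prefactor $(1-\mep^2\hat\varphi_t^2)^{-1}$ through the cancellations, which is why I prefer the $\tilde\Wc$ reduction above, where these denominators disappear at the outset.
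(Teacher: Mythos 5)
Your proposal is correct, and it is a genuinely different organization of the computation than the one in the paper. The paper never leaves the $\Ws$ alphabet: it starts from the block form \eqref{eq:M1blocks} with the already-computed boundary blocks $S$, $T$ of \eqref{eq:S}--\eqref{eq:T} (i.e.\ from the explicit product $J_1\tm\tilde{\mathfrak{B}}_1J_1$, with the gauge \eqref{eq:nu1} already inserted), expands $(1-\mep^2\hat\varphi_t^2)\,\tfrac12(M_1\Ws,\Ws)|_{\omega}$ in the components of $\Ws$, rewrites each brace through $\hg_1,\eg_1$ (e.g.\ $|\hat N|^2\Hs_1-\hat\varphi_2\Hs_2-\hat\varphi_3\Hs_3+\mep\hat\varphi_t\hat\varphi_2\Es_3-\mep^2\hat\varphi_t^2\Hs_1=\hg_1+\mep\hat\varphi_t\hat\varphi_3\Es_2-\mep^2\hat\varphi_t^2\hg_1$), lets the cross terms cancel pairwise, and finally divides out the common factor $(1-\mep^2\hat\varphi_t^2)$. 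Your opening move is instead to collapse $M_1$ against $\Ws=J\Wc$: since $J_1\Ws=(1+\hat\Psi_1)(1-\mep^2\hat\Psi_t^2)\Wc$, the definition \eqref{defmatrices} gives $(M_1\Ws,\Ws)=(1+\hat\Psi_1)(\tilde{\mathfrak{B}}_1\Wc,\Wc)$, which on $\omega$ is the $\nu$-affine combination $\bigl((\mathfrak{B}_1-\mep\hat\varphi_t\mathfrak{B}_0-\hat\varphi_2\mathfrak{B}_2-\hat\varphi_3\mathfrak{B}_3)\Wc,\Wc\bigr)$. That step is exact (I verified it, and also checked that the resulting coefficients of $1$, $\nu_2$, $\nu_3$ do equal $2(\Hs_3\Es_2-\Hs_2\Es_3)$, $2(\Hs_2\hg_1+\Es_2\eg_1)$, $2(\Hs_3\hg_1+\Es_3\eg_1)$ once $\Hs,\Es,\hg_1,\eg_1$ are expanded in $\Wc$ at $x_1=0$). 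What your route buys: you never need the explicit entries of $J_1$, $S$ or $T$, and no rational prefactor has to be carried through the cancellations. What it costs: the matching step migrates to the $\Wc$ variables, so all six boundary quantities on the right-hand side must be expanded by hand, whereas in the paper both sides are already written in the same ($\Ws$) variables and the identification is more immediate. The ``alternative route'' you sketch at the end is, in fact, exactly the paper's proof.

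One small imprecision: deducing $\tfrac12 P_3=\Hs_3\hg_1+\Es_3\eg_1$ from $\tfrac12 P_2=\Hs_2\hg_1+\Es_2\eg_1$ ``by the symmetry $2\leftrightarrow3$'' is not literally correct for the naive index swap. Exchanging the labels $2$ and $3$ reverses orientation, so the $\mep\hat\varphi_t$-terms, which come from cross products, do not map onto each other: for instance $\Hs_2=\Hc_2+\hat\varphi_2\Hc_1+\mep\hat\varphi_t\Ec_3$ is sent to $\Hc_3+\hat\varphi_3\Hc_1+\mep\hat\varphi_t\Ec_2\neq\Hs_3$. The swap becomes a true symmetry only if $\Hc$ is transformed as a pseudovector (equivalently, if one also flips the sign of $\hat\varphi_t$); otherwise the clean fix is simply to repeat for $P_3$ the same direct expansion you carried out for $P_2$. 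This is a one-line repair, not a gap in the argument.
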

\begin{proof}
Thanks to~\eqref{eq:M1blocks}, \eqref{eq:S} and~\eqref{eq:T}, we may write
\begin{align*}
(1-{\mep^2}\hat\Psi_t)^2\,\frac{1}{2}(M_1\Ws,\Ws)|_{\omega} \\
\qquad = &(1-{\mep^2}\hat\varphi_t^2) (\Hs_3\Es_2-\Hs_2\Es_3) \\
	& + \nu_2 \Hs_2 \left\{ |\hat N|^2 \Hs_1 -\hat\varphi_2 \Hs_2 - \hat\varphi_3 \Hs_3 + \mep\hat \varphi_t\hat\varphi_2 \Es_3 - {\mep^2}\hat \varphi_t^2 \Hs_1 \right\} \\
	& + \nu_2 \Es_2 \left\{ |\hat N|^2 \Es_1 - \hat\varphi_2 \Es_2 - \hat\varphi_3 \Es_3 {-} \mep\hat \varphi_t\hat\varphi_2 \Hs_3 - {\mep^2}\hat \varphi_t^2 \Es_1 \right\} \\
	& + \nu_3 \Hs_3 \left\{ |\hat N|^2 \Hs_1 - \hat\varphi_2 \Hs_2 - \hat\varphi_3 \Hs_3 {-} \mep \hat \varphi_t\hat\varphi_3 \Es_2 - {\mep^2}\hat \varphi_t^2 \Hs_1 \right\} \\
	& + \nu_3 \Es_3 \left\{ |\hat N|^2 \Es_1 - \hat\varphi_2 \Es_2 - \hat\varphi_3 \Es_3 + \mep\hat \varphi_t\hat\varphi_3 \Hs_2 - {\mep^2}\hat \varphi_t^2 \Es_1 \right\}\,.
\end{align*}
Since~$\Hs_1=\Hc_1$ we may replace
\begin{align*}
|\hat N|^2 \Hs_1 & - \hat\varphi_2 \Hs_2 - \hat\varphi_3 \Hs_3 + \mep\hat \varphi_t\hat\varphi_2 \Es_3 - {\mep^2}\hat \varphi_t^2 \Hs_1 \\
	& = (1+\hat\varphi_2^2 + \hat\varphi_3^2) \Hc_1 - \hat\varphi_2(\Hc_2+\hat\varphi_2\Hc_1) - \hat\varphi_3(\Hc_3+{\hat\varphi_3\Hc_1}) \\
	& \quad + \mep\hat\varphi_t (-\hat\varphi_2\Ec_3+\hat\varphi_3\Ec_2) + \mep\hat\varphi_t\hat\varphi_2 (\Ec_3+\hat\varphi_3\Ec_1) - {\mep^2}\hat\varphi_t^2 (\Hc_1-\hat\varphi_2\Hc_2) \\
	& = \hg_1 + \mep\hat\varphi_t\hat\varphi_3\Es_2 - {\mep^2}\hat\varphi_t^2 \hg_1 \,.
\end{align*}
Similarly, we get
\begin{align*}
|\hat N|^2 \Es_1 - \hat\varphi_2 \Es_2 - \hat\varphi_3 \Es_3 {-} \mep\hat \varphi_t\hat\varphi_2 \Hs_3 - {\mep^2}\hat \varphi_t^2 \Es_1
	& = \eg_1 - \mep\hat\varphi_t\hat\varphi_3\Hs_2 - {\mep^2}\hat\varphi_t^2 \eg_1 \,, \\
|\hat N|^2 \Hs_1 - \hat\varphi_2 \Hs_2 - \hat\varphi_3 \Hs_3 {-} \mep\hat \varphi_t\hat\varphi_3 \Es_2 - {\mep^2}\hat \varphi_t^2 \Hs_1
	& = \hg_1 - \mep\hat\varphi_t\hat\varphi_2\Es_3 - {\mep^2}\hat\varphi_t^2 \hg_1 \,,\\
|\hat N|^2 \Es_1 - \hat\varphi_2 \Es_2 - \hat\varphi_3 \Es_3 + \mep\hat \varphi_t\hat\varphi_3 \Hs_2 - {\mep^2}\hat \varphi_t^2 \Es_1
	& = \eg_1 + \mep\hat\varphi_t\hat\varphi_2\Hs_3 - {\mep^2}\hat\varphi_t^2 \eg_1 \,.
\end{align*}
The proof of~\eqref{eq:WM1W} immediately follows.
\end{proof}
\begin{definition}
We take~$\vec\nu$ such that
\begin{equation}\label{eq:nuv}
\vec\nu = \mep \hat v\,,\qquad \text{on the boundary~$\omega$,}
\end{equation}
and we extend~$\vec\nu$ outside the boundary keeping~$|\vec\nu|<1$.
\end{definition}
Relation~\eqref{eq:nu1} follows from~\eqref{eq:nuv}, by virtue of the first part of~\eqref{24}, that is, $\hat v_N=\hat\varphi_t$.
\begin{lemma}
Let us denote
\[ {\mathcal{U}}_{\gamma} \doteq  e^{-\gamma t} \mathcal{U}\,, \qquad \Ws_\gamma \doteq  e^{-\gamma t} \Ws\,, \qquad \varphi_\gamma \doteq  e^{-\gamma t} \varphi\,, \]
for some~$\gamma\geq0$. Then, using~\eqref{eq:nuv} and the boundary conditions \eqref{79'}--\eqref{82'}, as well as the costraint~\eqref{87}, the quadratic form
\begin{equation}\label{eq:A}
\mathcal{A}\doteq  -\frac{1}{2}({{E}}_{12}{\mathcal{U}}_{\gamma},{\mathcal{U}}_{\gamma})|_{\omega} +\frac{1}{2}(M_1\Ws_{\gamma},\Ws_{\gamma})|_{\omega}
\end{equation}
may equivalently be written as
\begin{align}
\nonumber
\mathcal{A}
	& = \hat\mu\,\bigl(\varphi_3\Hs_2-\varphi_2\Hs_3+\mep(\varphi_t+\gamma\varphi)\eg_1\bigr) + \varphi \, \bigl\{\mep q\p_1\hat v_N + \mep{u_1[\p_1\hat q]} + \mep\varphi [\p_1\hat q]\p_1 \hat v_N \\
\label{eq:Aphi}
	& \qquad + (\Hs_3+\mep\hat v_2\eg_1) (\mep\p_t\hat{\Hc}_3 - \p_2 \hat{\Ec}_1) + (\Hs_2-\mep\hat v_3 \eg_1)(\mep\p_t \hat{\Hc}_2 +\p_3\hat{\Ec}_1) \\
& \qquad + \mep(\hat v_2\Hs_2+ \hat v_3\Hs_3)(\p_2 \hat{\Hc}_2 +\p_3 \hat{\Hc}_3) \bigr\}\,, \nonumber
\end{align}
where~$\hat \mu$ is as in~\eqref{eq:mu}.
\end{lemma}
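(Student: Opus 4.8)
The plan is to evaluate the two quadratic forms in \eqref{eq:A} separately, rewrite every boundary trace of the dynamic unknowns by means of the boundary conditions \eqref{79'}--\eqref{82'} and the constraint \eqref{87}, and then sort the resulting terms into the two groups displayed in \eqref{eq:Aphi}. For the plasma part, since $E_{12}$ has nonzero entries only at $(1,2)$ and $(2,1)$ and $\mathcal{U}=(q,u,h,S)$, one immediately gets $-\tfrac12(E_{12}\mathcal{U}_\gamma,\mathcal{U}_\gamma)|_\omega=-q_\gamma u_{1\gamma}$. For the vacuum part I would invoke Lemma~\ref{lem:MWW} together with the choice $\vec\nu=\mep\hat v$ from \eqref{eq:nuv}, so that $\nu_2=\mep\hat v_2$, $\nu_3=\mep\hat v_3$, giving $\tfrac12(M_1\Ws_\gamma,\Ws_\gamma)|_\omega=\Hs_3\Es_2-\Hs_2\Es_3+\mep\hat v_2(\Hs_2\hg_1+\Es_2\eg_1)+\mep\hat v_3(\Hs_3\hg_1+\Es_3\eg_1)$, all traces being $\gamma$-weighted.

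Next I would substitute the boundary relations into these expressions, keeping careful track of the exponential weight. In the plasma term I replace $u_{1\gamma}$ via \eqref{79'} and $q_\gamma$ via \eqref{80'}, using \eqref{eq:hHEe} to write $q=\hat\hg_2\Hs_2+\hat\hg_3\Hs_3-\hat\Es_1\eg_1-[\p_1\hat q]\varphi$ on $\omega$. In the vacuum term I replace $\Es_2,\Es_3$ by \eqref{81'}--\eqref{82'} and $\hg_1$ by the constraint \eqref{87}. The only nonroutine point here is the weighting: since $\mathcal U_\gamma=e^{-\gamma t}\mathcal U$ etc., each factor $e^{-\gamma t}\p_t$ acting on a product $\varphi\,\hat{\,\cdot\,}$ becomes $(\p_t+\gamma)$ acting on the weighted front, i.e. it produces the combination $\varphi_t+\gamma\varphi$ appearing in \eqref{eq:Aphi}, while spatial derivatives commute with the weight. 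Expanding every product $\p_t(\varphi\hat\Hc_k)$, $\p_j(\varphi\hat\Ec_1)$, $\p_j(\hat\Hc_j\varphi)$ by the Leibniz rule splits each contribution into a part where the derivative hits $\varphi$ (yielding $\varphi_t+\gamma\varphi$, $\varphi_2$, $\varphi_3$) and a part where it hits the basic state (yielding a factor $\varphi$ times a derivative of $\hat\Hc$, $\hat\Ec$, etc.).

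The final step is the collection. The terms in which the derivative falls on $\varphi$, together with the purely algebraic cross terms quadratic in the dynamic unknowns, must assemble into $\hat\mu\bigl(\varphi_3\Hs_2-\varphi_2\Hs_3+\mep(\varphi_t+\gamma\varphi)\eg_1\bigr)$: here the $\hat\Ec_1$-contributions come from the $-\p_2(\varphi\hat\Ec_1)$, $-\p_3(\varphi\hat\Ec_1)$ pieces of $\Es_2,\Es_3$ and from the $\eg_1$-term of $q$, while the $\mp\mep\hat v_3\hat\Hc_2,\ \pm\mep\hat v_2\hat\Hc_3$ contributions come from the $\mep\hat v_2\Es_2\eg_1$, $\mep\hat v_3\Es_3\eg_1$ terms and from the $\varphi_j\hat\Hc_j$ part generated by the constraint \eqref{87}; recognizing precisely the combination $\hat\mu=(\hat\Ec_1-\mep\hat v_3\hat\Hc_2+\mep\hat v_2\hat\Hc_3)|_{x_1=0}$ of \eqref{eq:mu} is the organizing observation. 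All remaining terms each carry a factor $\varphi$ and a derivative of the basic state and are collected into the bracket $\varphi\{\cdots\}$; in particular the two sources of $[\p_1\hat q]$ (from $q$'s $-[\p_1\hat q]\varphi$ and from $u_1$'s $-\varphi\,\p_1\hat v_N$) combine so that the $\varphi^2$ cross terms reduce to the single $[\p_1\hat q]\,\p_1\hat v_N$ contribution, and the $\varphi(\p_2\hat\Hc_2+\p_3\hat\Hc_3)$ piece of the constraint supplies the last bracket term.

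The main obstacle is not conceptual but the bookkeeping: one must expand a large number of bilinear boundary terms and verify that, after substituting \eqref{79'}--\eqref{82'} and \eqref{87}, they cancel and recombine exactly into \eqref{eq:Aphi}. The two delicate aspects are the consistent propagation of the factors of $\mep$ (arising from $\vec\nu=\mep\hat v$ and from the $\mep\p_t(\varphi\hat\Hc_k)$ terms in \eqref{81'}--\eqref{82'}) and the correct handling of the weight, which everywhere converts $\p_t\varphi$ into $\varphi_t+\gamma\varphi$; checking that the dynamic-times-dynamic terms collapse precisely to the $\hat\mu$ combination, with no leftover quadratic residue, is the crux of the verification.
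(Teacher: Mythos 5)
Your proposal is correct and follows essentially the same route as the paper's proof: the paper likewise starts from the plasma term $-q_\gamma u_{1\gamma}$ plus the vacuum expression of Lemma~\ref{lem:MWW} with $\vec\nu=\mep\hat v$ from \eqref{eq:nuv}, substitutes \eqref{81'}--\eqref{82'} and the constraint \eqref{87} (with the exponential weight turning $\p_t$ into $\p_t+\gamma$ on $\varphi_\gamma$, which is exactly where the $\varphi_t+\gamma\varphi$ combination and the $\mep\gamma\varphi$ terms arise), and then recognizes the $\hat\mu$ combination of \eqref{eq:mu} after invoking \eqref{79'} and \eqref{80'}. The only cosmetic difference is the direction of substitution: the paper keeps $-qu_1$ intact and uses \eqref{79'}, \eqref{80'} to convert the vacuum-side combinations into $u_1+\varphi\,\p_1\hat v_N$ and $q+[\p_1\hat q]\varphi$, rather than eliminating $q$ and $u_1$ as you do --- the same identities applied in reverse, with the same final regrouping.
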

\begin{proof}
For the sake of brevity, we omit the pedices. By virtue of Lemma~\ref{lem:MWW} {and~\eqref{eq:nuv}}, it holds
\[ \mathcal{A} = -q\,u_1 + \Hs_3\Es_2-\Hs_2\Es_3 + \mep(\hat v_2\Hs_2+ \hat v_3\Hs_3)\hg_1 + \mep(\hat v_2\Es_2+\hat v_3\Es_3)\eg_1 \,. \]
As a consequence of \eqref{81'}--\eqref{82'}, and using~\eqref{87}, we may replace
\begin{align*}
\hg_1 & = \p_2 (\varphi\hat{\Hc}_2) +\p_3 (\varphi\hat{\Hc}_3)\, , \\
\Es_2 & = \mep\p_t(\varphi\hat{\Hc}_3) - \p_2(\varphi\hat{\Ec}_1)\, , \\
\Es_3 & = - \mep\p_t(\varphi\hat{\Hc}_2) - \p_3(\varphi\hat{\Ec}_1) \, ,\\
\end{align*}
deriving
\begin{align*}
\mathcal{A}
	& = -q\,u_1 + \Hs_3(\mep\varphi_t\,\hat{\Hc}_3-\varphi_2\,\hat{\Ec}_1)+\Hs_2(\mep\varphi_t\,\hat{\Hc}_2+\varphi_3\,\hat{\Ec}_1) \\
& \qquad + \mep(\hat v_2\Hs_2+ \hat v_3\Hs_3)(\varphi_2\hat{\Hc}_2+\varphi_3\hat{\Hc}_3) \\
	& \qquad + \mep\bigl(\hat v_2(\mep\varphi_t\,\hat{\Hc}_3- \varphi_2\,\hat{\Ec}_1)-\hat v_3( \mep\varphi_t\,\hat{\Hc}_2 + \varphi_3\,\hat{\Ec}_1)\bigr)\eg_1\\
	& \qquad + \varphi\, \biggl\{ \Hs_3(\mep\p_t\hat{\Hc}_3 - \p_2 \hat{\Ec}_1) + \Hs_2(\mep\p_t \hat{\Hc}_2 +\p_3\hat{\Ec}_1) \\
& \qquad + \mep(\hat v_2\Hs_2+ \hat v_3\Hs_3)(\p_2 \hat{\Hc}_2 +\p_3 \hat{\Hc}_3) \\
	& \qquad \qquad \left. + \mep\bigl(\hat v_2(\mep\p_t \hat{\Hc}_3 - \p_2 \hat{\Ec}_1)-\hat v_3(\mep\p_t\hat{\Hc}_2 + \p_3 \hat{\Ec}_1)\bigr)\eg_1 \right\} \\
	& \qquad + \mep\gamma\varphi\,\left\{ \Hs_3\hat\Hc_3 + \Hs_2\hat{\Hc}_2 + \mep(\hat v_2\hat{\Hc}_3 -\hat v_3\hat{\Hc}_2)\eg_1\right\} = \mathrm{(I)}+\varphi\,\mathrm{(II)}+\mep\gamma\varphi\,\mathrm{(III)}\,.
\end{align*}
%
Since
\begin{align*}
\mathrm{(I)}+\mep\gamma\varphi\,\mathrm{(III)} = -q\,u_1
	& + \varphi_2 (-\hat\Ec_1\Hs_3+\mep\hat v_2\hat\Hc_2\Hs_2+\mep\hat v_3\hat\Hc_2\Hs_3-\mep\hat v_2\hat\Ec_1\eg_1) \\
	& + \varphi_3 (\hat\Ec_1\Hs_2+\mep\hat v_2\hat\Hc_3\Hs_2+\mep\hat v_3\hat\Hc_3\Hs_3-\mep\hat v_3\hat\Ec_1\eg_1) \\
	& + \mep\varphi_t (\hat\Hc_3\Hs_3+\hat\Hc_2\Hs_2+\mep(\hat v_2\hat\Hc_3-\hat v_3\hat\Hc_2)\eg_1) \\
	& + \mep\gamma\varphi (\Hs_3\hat\Hc_3 + \Hs_2\hat{\Hc}_2 + \mep(\hat v_2\hat{\Hc}_3 -\hat v_3\hat{\Hc}_2)\eg_1) \,,
\end{align*}
using~\eqref{79'}, i.e.
\begin{align} \label{bc.gammaphi}
\varphi_t + \hat v_2 \varphi_2 + \hat v_3 \varphi_3 + \gamma\varphi = \varphi\p_1 \hat v_N +u_1\,,
\end{align}
we may write
\begin{align*}
\mathrm{(I)}+\mep\gamma\varphi\,\mathrm{(III)}
	& = -q\,u_1 + \mep(\varphi\p_1 \hat v_N +u_1) (\hat\Hc_2\Hs_2+\hat\Hc_3\Hs_3-\hat\Ec_1\eg_1) \\
	& \qquad + (\hat\Ec_1-\mep\hat v_3\hat\Hc_2+\mep\hat v_2\hat\Hc_3)(\varphi_3\Hs_2-\varphi_2\Hs_3+\mep(\varphi_t+\gamma\varphi)\eg_1)\\
	& = -q\,u_1 + \mep(\varphi\p_1 \hat v_N +u_1) (q+[\p_1\hat q]\varphi) \\
 & \qquad+ \hat\mu (\varphi_3\Hs_2-\varphi_2\Hs_3+\mep(\varphi_t+\gamma\varphi)\eg_1) \\
	& = \mep\varphi (q\p_1\hat v_N + u_1[\p_1\hat q]) + \mep\varphi^2 [\p_1\hat q]\p_1 \hat v_N \\
& \qquad + \hat\mu\,(\varphi_3\Hs_2-\varphi_2\Hs_3+\mep(\varphi_t+\gamma\varphi)\eg_1)\,,
\end{align*}
where in the second equality we used~\eqref{80'}, and where~$\hat\mu$ is as in~\eqref{eq:mu}.
The proof of~\eqref{eq:Aphi} immediately follows.
\end{proof}

\subsection{The a priori estimate}

For the proof of our basic a priori estimate \eqref{54} we  will apply the energy method to the symmetric hyperbolic systems \eqref{77'a} and \eqref{50"}.
In the sequel $\gamma_0\ge1$ denotes a generic constant sufficiently large which may increase from formula to formula, and $C$ is a generic constant that may change from line to line.

First of all we provide some preparatory estimates.
In particular, to estimate the weighted conormal derivative  $Z_1=\sigma\partial_1$ of $\mathcal{U}$ (recall the definition \eqref{normaconormale} of the $\gamma$-dependent norm of $H^1_{\tan,\gamma}$) we do not need any boundary condition because the weight $\sigma$ vanishes on $\omega$. Applying to system \eqref{77'a} the operator $Z_1$ and using standard arguments of the energy method\footnote{
We multiply $Z_1$\eqref{77'a} by $e^{-\gamma t}\,Z_1\mathcal{U}_\gamma$ and integrate by parts over $Q^+$, then we use the Cauchy-Schwarz inequality.
} yields the inequality
\begin{equation}\begin{split}
& \gamma \|Z_1\mathcal{U}_\gamma\|^2_{L^2(Q^+)} \\
& \qquad \leq \frac{C}{\gamma}\left\{ \|\mathcal{F}_\gamma\|^2_{H^1_{\tan,\gamma}(Q^+)}+\|\mathcal{U}_\gamma\|^2_{H^1_{\tan,\gamma}(Q^+)}
+\|{E}_{12}\partial_1\mathcal{U}_\gamma\|^2_{L^2(Q^+)} \right\}
,
\label{126}\end{split}
\end{equation}
for $\gamma\ge \gamma_0$. On the other hand, directly from the equation \eqref{77'a} we have
\begin{equation}
\begin{array}{ll}\label{126'}
\|{E}_{12}\partial_1\mathcal{U}_\gamma\|^2_{L^2(Q^+)}\le C
\left\{ \|\mathcal{F}_\gamma\|^2_{L^2(Q^+)}+\|\mathcal{U}_\gamma\|^2_{H^1_{\tan,\gamma}(Q^+)} \right\},
\end{array}
\end{equation}
where $C$ is independent of $\gamma$.
Thus from \eqref{126}, \eqref{126'} we get
\begin{equation}
\gamma \|Z_1\mathcal{U}_\gamma\|^2_{L^2(Q^+)} \leq \frac{C}{\gamma}\left\{ \|\mathcal{F}_\gamma\|^2_{H^1_{\tan,\gamma}(Q^+)}+\|\mathcal{U}_\gamma\|^2_{H^1_{\tan,\gamma}(Q^+)} \right\}
, \quad \gamma\ge \gamma_0,
\label{126''}
\end{equation}
where $C$ is independent of $\gamma$.
Furthermore, using the special structure of the boundary matrix in \eqref{77'a} (see \eqref{a10}) and the divergence constraint
\eqref{84}, we may estimate the normal derivative of the noncharacteristic part $\mathcal{U}_{n\gamma}=e^{-\gamma t}(q,u_1,h_1)$ of the ``plasma'' unknown $\mathcal{U}_{\gamma}$:
\begin{equation}
\|\partial_1 \mathcal{U}_{n\gamma} \|^2_{L^2(Q^+)} \leq  C\left\{ \|\mathcal{F}_\gamma\|^2_{L^2(Q^+)} +\|\mathcal{U}_\gamma\|^2_{H^1_{\tan,\gamma}(Q^+)} \right\},
\label{130}
\end{equation}
where $C$ is independent of $\gamma$.
In a similar way we wish to express the normal derivative of $\Ws$ through its tangential derivatives. Here it is convenient to use system \eqref{78'} rather than \eqref{50"}. We find from \eqref{78'} an explicit expression for the normal derivatives of $\Hs_2,\Hs_3,\Es_2,\Es_3$. An explicit expression for the normal derivatives of $\Hs_1,\Es_1$ is found through the divergence constraints \eqref{85}.
Thus we can estimate the normal derivatives of all the components of $\Ws$ through its tangential derivatives:
\begin{equation}\begin{split}
& \|\partial_1\Ws_\gamma\|^2_{L_2(Q^-)} \\
& \qquad \leq C
\left\{\gamma^2 \|\Ws_\gamma \|^2_{L_2(Q^-)}+\|\partial_t\Ws_\gamma \|^2_{L_2(Q^-)}+
\sum_{k=2}^3\|\partial_k\Ws_\gamma \|^2_{L_2(Q^-)} \right\}
,\label{127}\end{split}
\end{equation}
where $C$ does not depend on $\gamma$.

As for the front function $\varphi$ we easily obtain from \eqref{79'} the $L^2$ estimate
\begin{equation}
\gamma\|\varphi_{\gamma}\|^2_{L^2(\omega)}
\\
 \leq \frac{C}{\gamma}
\|u_{1\gamma}\|^2_{L^2(\omega)}, \quad   \gamma\ge \gamma_0,
\label{phi}
\end{equation}
where $C$ is independent of $\gamma$.
Furthermore, thanks to our basic assumption \eqref{41}\footnote{Under the conditions $\hat{H}_N=\hat{\mathcal{H}}_N=0$ one has
$|\hat{H}\times\hat{\mathcal{H}}|^2=(\hat{H}_2\hat{\mathcal{H}}_3
-\hat{H}_3\hat{\mathcal{H}}_2)^2\langle\nabla'\hat\varphi\rangle^2$ on $\omega$,
where we have set $\langle\nabla'\hat\varphi\rangle\doteq (1+|\hat\varphi_2|^2+|\hat\varphi_3|^2)^{1/2}$.}
we can resolve \eqref{86}, \eqref{87} and \eqref{79'} for the space-time gradient
\begin{align*}
\nabla_{t,x'}\varphi_\gamma =(\partial_t\varphi_\gamma,\partial_2\varphi_\gamma,\partial_3\varphi_\gamma)\, :
\end{align*}
\begin{equation}
\nabla_{t,x'}\varphi_{\gamma}=\hat{a}_1{h}_{1\gamma}+\hat{a}_2{\mathfrak{h}}_{1\gamma} +\hat{a}_3{u}_{1\gamma}+\hat{a}_4\varphi_\gamma+\gamma\hat{a}_5\varphi_\gamma ,
\label{12a}
\end{equation}
where the vector-functions $\hat{a}_{\alpha}={a}_{\alpha}(\hat{U}_{|\omega},\hat{\mathcal{H}}_{|\omega})$ of coefficients can be easily written in explicit form. From \eqref{12a} we get
\begin{equation}
\begin{array}{ll}\label{gradphi}
\|\nabla_{t,x'}\varphi_{\gamma}\|_{L^2(\omega)}\leq
C\left( \|\mathcal{U}_{n\gamma}|_\omega\|_{L^2(\omega)}+\|\Ws_{\gamma}|_\omega \|_{L^2(\omega)}+ \gamma\|\varphi_\gamma\|_{L^2(\omega)}
\right).
\end{array}
\end{equation}

Now we prove an $L^2$ energy estimate for $(\mathcal{U},\Ws)$. We multiply \eqref{77'a} by $e^{-\gamma t}\,\mathcal{U}_\gamma$ and \eqref{50"} by $e^{-\gamma t}\,\Ws_\gamma$, integrate by parts over $Q^\pm$, then we use the Cauchy-Schwarz inequality.
If we set
\begin{align*}
I = \int_{Q^+}(\hat{\mathcal A}_0{\mathcal{U}}_\gamma,{\mathcal{U}}_\gamma)\,dxdt+
\int_{Q^-}(M_0\Ws_\gamma,\Ws_\gamma)dxdt\, ,
\end{align*}
we easily obtain
\begin{align}
\gamma I+ \int_{\omega}\mathcal{A}\,dx'dt
 \leq C
\left\{ \frac{1}{\gamma}\|\mathcal{F}_\gamma\|^2_{L^2(Q^+)}+\|\mathcal{U}_\gamma\|^2_{L^2(Q^+)} +\|\Ws_\gamma\|^2_{L^2(Q^-)} \right\},
\label{19a0}
\end{align}
where~$\mathcal{A}$ is as in~\eqref{eq:A} and the constant $C$ in \eqref{19a0} is uniform with respect to $\gamma$.
%
%
Now observe that, if we temporarily omit $\gamma$ subscripts, for the term with $\hat\mu$ in $\mathcal A$, we have
\begin{align*}
& \hat\mu\,(\varphi_3\Hs_2-\varphi_2\Hs_3+(\varphi_t+\gamma\varphi)\eg_1) \\
& \qquad = \p_t (\hat\mu\varphi\eg_1) - \p_2(\hat\mu\varphi\Hs_3) + \p_3(\hat\mu\varphi\Hs_2) + 2\hat\mu\gamma\varphi\eg_1\\
& \qquad \qquad  -\hat\mu\varphi(\p_t\eg_1+\gamma\eg_1+\p_3\Hs_2-\p_2\Hs_3) + \text{l.o.t.}\\
& \qquad = \p_t (\hat\mu\varphi\eg_1) - \p_2(\hat\mu\varphi\Hs_3) + \p_3(\hat\mu\varphi\Hs_2)  + 2\hat\mu\gamma\varphi\eg_1 + \text{l.o.t.} \\
& \qquad = \tilde{\mathcal{A}} + 2\hat\mu\gamma\varphi\eg_1 + \text{l.o.t.}
\end{align*}
(where l.o.t. is the sum of lower-order terms) by exploiting the first component of $\p_t \eg_\gamma + \gamma\eg_{1,\gamma}-\rot\Hs_\gamma = 0$; using this argument and~\eqref{eq:Aphi} in \eqref{19a0}, and integrating $\tilde{\mathcal{A}}$, we obtain
\begin{equation}\begin{split}
 & \gamma I + 2\gamma\int_\omega \hat\mu \varphi_\gamma \eg_{1,\gamma} \, dx' dt \\
 & \qquad \leq \frac{C}{\gamma}\left\{\|\mathcal{F}_\gamma\|^2_{L^2(Q^+)}
 + \|\mathcal{U}_{n\gamma}|_\omega\|^2_{L^2(\omega)}+ \|\Ws_{\gamma}|_\omega\|^2_{L^2(\omega)}
 \right\} \label{L2a}
 \\
 & \qquad \qquad +C\left( \|\mathcal{U}_\gamma\|^2_{L^2(Q^+)}+\|\Ws_\gamma\|^2_{L^2(Q^-)}\right)
 +  \gamma \|\varphi_{\gamma}\|^2_{L^2(\omega)} \, ,
\end{split}\end{equation}
where $C$ is independent of $\gamma$.


Now we want to derive the a priori estimate for tangential derivatives. Differentiating systems \eqref{77'a} and \eqref{50"} with respect to $x_0=t$, $x_2$ or $x_3$, using standard arguments of the energy method, and applying \eqref{130}, \eqref{127}, gives the energy inequality
\begin{align}
\gamma I_\ell
+ \int_{\omega}\mathcal{A}_{\ell}\,dx'dt
\leq \frac{C}{\gamma}
\left\{ \|\mathcal{F}_\gamma\|^2_{H^1_{\tan,\gamma}(Q^+)}+\|\mathcal{U}_\gamma\|^2_{H^1_{\tan,\gamma}(Q^+)} +\|W_\gamma\|^2_{H^1_{\gamma}(Q^-)} \right\},
\label{19a}
\end{align}
where $\ell =0,2,3$, and where we have denoted
\begin{align*}
I_\ell = \int_{Q^+}(\hat{A}_0 Z_{\ell}\Uc_\gamma,Z_{\ell}\Uc_\gamma)\,dxdt+
\int_{Q^-}(M_0Z_{\ell}\Ws_\gamma,Z_{\ell}\Ws_\gamma)dxdt
\end{align*}
and
\begin{equation*}
\begin{array}{ll}\label{19b}
\ds \mathcal{A}_{\ell}=-\frac{1}{2}({E}_{12}Z_{\ell}\Uc_{\gamma},Z_{\ell}
\Uc_{\gamma})|_{\omega}+\frac{1}{2}(M_1Z_{\ell}\Ws_{\gamma},Z_{\ell}
\Ws_{\gamma})|_{\omega}\,.
\end{array}
\end{equation*}
%
Using again~\eqref{eq:Aphi}, we obtain (for simplicity we drop again the index $\gamma$)
\begin{align}\begin{split}
\mathcal{A}_{\ell}=&
\hat\mu\,\bigl((Z_{\ell}\varphi_3)Z_{\ell}\Hs_2-(Z_{\ell}\varphi_2)Z_{\ell}\Hs_3+(Z_{\ell}\varphi_t+\gamma Z_{\ell}\varphi)Z_{\ell}\eg_1\bigr) \\
& \qquad + Z_{\ell}\varphi \big\{ [ \partial_1\hat{q}]\,(Z_{\ell}u_1+\varphi(\partial_1\hat{v}_N)Z_{\ell}) + \partial_1\hat{v}_NZ_{\ell}q \\
& \qquad + (  \partial_t\hat{\mathcal{H}}_3 - \partial_2\hat{E}_1)(Z_{\ell}\Hs_{3} + \hat{v}_2 Z_{\ell}\eg_1)\\
 & \qquad + ( \partial_t\hat{\mathcal{H}}_2 + \partial_3\hat{E}_1)(Z_{\ell}\Hs_{2} - \hat{v}_3 Z_{\ell}\eg_1) \\ & \qquad +(\partial_2\hat{\mathcal{H}}_2 + \partial_3\hat{\mathcal{H}}_3 ) (\hat{v}_2Z_{\ell}\Hs_{2} + \hat{v}_3Z_{\ell}\Hs_{3}   )\big\}\\
& \qquad +{\rm l.o.t.}\,\! ,
\quad \mbox{on  }{\omega}.
\label{Al}
\end{split}\end{align}
 The only exception being the first term, using \eqref{12a} we reduce the above terms to those like
\[
\hat{c}\,h_1Z_{\ell}u_1,\quad \hat{c}\, {h}_1Z_{\ell}\varphi,
\quad
\hat{c}\, {h}_1Z_{\ell}\Hs_j,\quad \hat{c}\, {h}_1Z_{\ell}\Es_j
,\quad\dots
\quad \mbox{on  }{\omega}
,\]
terms as above with $\mathfrak{h}_1,u_1$ instead of $h_1$, or even ``better'' terms like
$$
\gamma\hat{c}\varphi Z_{\ell}{u}_1,\quad
\gamma\hat{c}\varphi Z_{\ell}\varphi.$$ Here and below $\hat{c}$ is the common notation for a generic coefficient depending on the basic state \eqref{21}.
By integration by parts such ``better'' terms can be reduced to the above ones and terms of lower order.
\\
The terms like $\hat{c}\,h_1Z_{\ell} u_{1|x_1=0}$ are estimated by passing to the volume integral and integrating by parts:
\begin{multline*}
\int_{\omega}\hat{c}\,h_1Z_{\ell}u_{1|x_1=0}\,{\rm d}x'\,{\rm d}t
=-\int_{Q^+}\partial_1\bigl(\tilde{c}h_1Z_{\ell}u_1\bigr){\rm d}x\,{\rm d}t \\
=\int_{Q^+}\Bigl\{(Z_{\ell}\tilde{c})h_1(\partial_1u_1)+\tilde{c}(Z_{\ell}h_1)\partial_1u_1  -(\partial_1\tilde{c})h_1
Z_\ell u_1
-\tilde{c}(\partial_1h_1)Z_{\ell}u_1
\Bigr\}{\rm d}x\,{\rm d}t
,
\end{multline*}
where $\tilde{c}|_{x_1=0}=\hat{c}$. Estimating the righthand side by the H\"older's inequality and \eqref{130} gives
\begin{equation}
\begin{array}{ll}\label{unacaso}
\ds\left|
\int_{\omega}\hat{c}\,h_1Z_{\ell}u_{1|x_1=0}\,{\rm d}x'\,{\rm d}t
\right| \leq
  C\left\{ \|\mathcal{F}_\gamma\|^2_{L^2(Q^+)} +\|\mathcal{U}_\gamma\|^2_{H^1_{\tan,\gamma}(Q^+)} \right\}.
\end{array}
\end{equation}
Terms like $\gamma\hat c \varphi Z_\ell\varphi$ can be handled similarly. Note that, passing to the volume integral and integrating by part yields, among other things, the term
\begin{align} \label{est.gammaterm}
\gamma\left| \int_{Q^+} \tilde c (Z_\ell\varphi) \p_1 u_1 \, dx dt \right| \leq C\gamma \| Z_\ell\varphi \|^2_{L^2(Q^+)} + C\gamma \| \p_1 u_1 \|^2_{L^2(Q^+)}\, .
\end{align}
In the same way, we estimate the other similar terms $  \hat{c}\, {h}_1Z_{\ell}\Hs_j, \hat{c}\, {h}_1Z_{\ell}\Es_j
,$ etc. Notice that we only need to estimate normal derivatives either of components of $\mathcal{U}_{n\gamma}$ or $\Ws_{\gamma}$. For terms like $\hat{c}\,{\mathfrak{h}}_{1}Z_{\ell}u_1, \hat{c}\,{\mathfrak{h}}_{1}Z_{\ell}\Es_j$, etc. we use \eqref{127} instead of \eqref{130}.
We treat the terms like $\hat{c}\,h_{1|x_1=0}Z_{\ell} \varphi$  by substituting \eqref{12a} again:
\begin{align}\begin{split}\label{unacaso2}
&\left|\int_{\omega}\hat{c}\,h_1Z_{\ell}\varphi\,{\rm d}x'\,{\rm d}t\right|
=\left|\int_{\omega}\hat{c}\,h_1\Bigl(\hat{a}_1{h}_{1}+\hat{a}_2{\mathfrak{h}}_{1} +\hat{a}_3{u}_{1}+\hat{a}_4\varphi+\gamma\hat{a}_5\varphi
\Bigr){\rm d}x\,{\rm d}t \right|
\\
& \qquad \le
C\left( \|\mathcal{U}_{n}|_\omega\|^2_{L^2(\omega)}+\|\Ws|_\omega\|^2_{L^2(\omega)} +\gamma^2\|\varphi\|^2_{L^2(\omega)}\right)\\
& \qquad \leq C\left( \|\mathcal{U}_{n}|_\omega\|^2_{L^2(\omega)}+\|\Ws|_\omega\|^2_{L^2(\omega)}\right) .
\end{split}\end{align}
Combining \eqref{19a}, \eqref{unacaso}, \eqref{est.gammaterm}, \eqref{unacaso2} and similar inequalities for the other terms of \eqref{Al}, and proceeding similarly as above for the term with $\hat\mu$, yields (we restore the index $\gamma$)
\begin{align}\begin{split}
& \gamma I_\ell + 2\gamma\int_\omega \hat\mu (Z_\ell \varphi_\gamma) Z_\ell \eg_{1,\gamma} \, dx' dt \\
& \qquad  \leq C\Big\{\frac{1}{\gamma}\|\mathcal{F}_\gamma\|^2_{H^1_{\tan,\gamma}(Q^+)}
 +\|\mathcal{U}_\gamma\|^2_{H^1_{\tan,\gamma}(Q^+)}
 +\|\Ws_\gamma\|^2_{H^1_{\gamma}(Q^-)}
\\
 & \qquad +\gamma \left( \|\mathcal{U}_{n\gamma}|_\omega\|^2_{L^2(\omega)} +  \|\Ws_{\gamma}|_\omega\|^2_{L^2(\omega)} \right)
\Big\},
 \quad  \gamma\ge \gamma_0,
\label{Zell}
\end{split}\end{align}
where $C$ is independent of $\gamma$.
To treat the boundary integral in the lefthand side of \eqref{Zell}, which has no definite sign, we argue as in \cite{trakhinin12}. Let us recall that $\mu^*$ is a constant such that $|\hat\mu|\leq\mu^*$.
By substituting \eqref{12a} and proceeding as for \eqref{unacaso}, we deal with
\begin{align*}
& 2\gamma\left| \int_\omega \hat\mu (Z_\ell \varphi_\gamma) Z_\ell \eg_{1,\gamma} \, dx' dt \right| \\
& \qquad \leq
 C\gamma \mu^* \left\{ \|\mathcal{F}_\gamma\|^2_{L^2(Q^+)} +\|\mathcal{U}_\gamma\|^2_{H^1_{\tan,\gamma}(Q^+)} + \| \Ws_\gamma\|^2_{H^1_\gamma(Q^-)}  \right\}\\
 & \qquad \leq \frac{C}{\gamma} \mu^* \|\mathcal{F}_\gamma\|^2_{H^1_{\tan,\gamma}(Q^+)} +C\gamma\mu^* \left\{ \|\mathcal{U}_\gamma\|^2_{H^1_{\tan,\gamma}(Q^+)} + \| \Ws_\gamma\|^2_{H^1_\gamma(Q^-)}  \right\} .
\end{align*}
Therefore, we obtain
\begin{align}\begin{split}
\gamma I_\ell
 \leq& C\frac{1+\mu^*}{\gamma}\|\mathcal{F}_\gamma\|^2_{H^1_{\tan,\gamma}(Q^+)}\\
 & \qquad +C(1+\gamma\mu^*)\Bigl\{\|\mathcal{U}_\gamma\|^2_{H^1_{\tan,\gamma}(Q^+)}
 +\|\Ws_\gamma\|^2_{H^1_{\gamma}(Q^-)} \Bigr\} \\
 & \qquad +C\gamma \left( \|\mathcal{U}_{n\gamma}|_\omega\|^2_{L^2(\omega)} +  \|\Ws_{\gamma}|_\omega\|^2_{L^2(\omega)} \right) ,
 \quad  \gamma\ge \gamma_0\, .
\label{Zell2}
\end{split}\end{align}
Then, if we assume that $\mu^*$ is small enough (so that, for instance, $C\mu^*\leq 1/2$), from \eqref{126''}, \eqref{127}, \eqref{L2a} and \eqref{Zell2} we deduce
\begin{align}\begin{split}
& \gamma\left( \|\mathcal{U}_\gamma\|^2_{H^1_{\tan,\gamma}(Q^+)}
+\|\Ws_\gamma\|^2_{H^1_{\gamma}(Q^-)}\right)\\
 \leq& C\Big\{\frac{1}{\gamma}\|\mathcal{F}_\gamma\|^2_{H^1_{\tan,\gamma}(Q^+)}
 +\|\mathcal{U}_\gamma\|^2_{H^1_{\tan,\gamma}(Q^+)}
 +\|\Ws_\gamma\|^2_{H^1_{\gamma}(Q^-)}
\\ & \qquad+\gamma \left( \|\mathcal{U}_{n\gamma}|_\omega\|^2_{L^2(\omega)} +  \|\Ws_{\gamma}|_\omega\|^2_{L^2(\omega)} \right)
\Big\},
 \quad \gamma\ge \gamma_0,
\label{H1}
\end{split}\end{align}
where $C$ is independent of $\gamma$ (note that the second term in \eqref{L2a} is of lower order with respect to the analog one in \eqref{Zell2}).
%
%
We need the following estimates for the trace of $\mathcal{U}_n,{\Ws}$.
\begin{lemma}\label{interpol}
The functions $\mathcal{U}_n,\Ws$ satisfy
\begin{equation}
\begin{array}{ll}\label{interpol1}
\gamma\|\mathcal{U}_{n\gamma}|_\omega\|^2_{L^2(\omega)}
+
\|\mathcal{U}_{n\gamma}|_\omega\|^2_{H^{1/2}_{\gamma}(\omega)}
\le C\left(\|\mathcal{F}_\gamma\|^2_{L^2(Q^+)}
 +\|\mathcal{U}_\gamma\|^2_{H^1_{\tan,\gamma}(Q^+)}
\right),

\end{array}
\end{equation}
\begin{equation}
\begin{array}{ll}\label{interpol2}

\gamma  \|\Ws_{\gamma}|_\omega\|^2_{L^2(\omega)}
+
\|\Ws_{\gamma}|_\omega\|^2_{H^{1/2}_{\gamma}(\omega)}
\le C  \|\Ws_\gamma\|^2_{H^1_{\gamma}(Q^-)}.

\end{array}
\end{equation}
\end{lemma}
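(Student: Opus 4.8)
The plan is to reduce both inequalities to the standard weighted trace estimate on a half-space, the only real issue being that the plasma unknown $\mathcal{U}$ carries merely conormal regularity. Recall the elementary weighted trace inequality: for $w\in H^1_\gamma(Q^-)$, writing $\hat w(x_1,\xi')$ for the Fourier transform of $w$ in the tangential variables $(t,x')$ with dual variable $\xi'$, the pointwise bound $|\hat w(0,\xi')|^2\le 2\int |\hat w(x_1,\xi')|\,|\partial_1\hat w(x_1,\xi')|\,dx_1$ (integration in the normal variable $x_1$), after multiplication by $\lambda^{1,\gamma}(\xi')$, use of $2\lambda^{1,\gamma}ab\le(\lambda^{1,\gamma})^2a^2+b^2$, and integration in $\xi'$, gives
\[
\gamma\|w|_\omega\|^2_{L^2(\omega)}+\|w|_\omega\|^2_{H^{1/2}_\gamma(\omega)}\le C\,\|w\|^2_{H^1_\gamma(Q^-)},
\]
and likewise on $Q^+$ (here one also uses $\gamma\le\lambda^{1,\gamma}(\xi')$ to subsume the $L^2$ term). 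Applying this componentwise to $\Ws_\gamma\in H^1_\gamma(Q^-)$ yields \eqref{interpol2} immediately.

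For \eqref{interpol1} the key observation is that the noncharacteristic component $\mathcal{U}_n=(q,u_1,h_1)$, in contrast to the full vector $\mathcal{U}$, actually belongs to $H^1_\gamma(Q^+)$ with the right bound. Its genuine tangential derivatives $\partial_t\mathcal{U}_{n\gamma},\partial_2\mathcal{U}_{n\gamma},\partial_3\mathcal{U}_{n\gamma}$ are controlled by $\|\mathcal{U}_\gamma\|_{H^1_{\tan,\gamma}(Q^+)}$ (since $Z_0=\partial_t$, $Z_2=\partial_2$, $Z_3=\partial_3$ are honest derivatives), whereas its normal derivative $\partial_1\mathcal{U}_{n\gamma}$ is precisely the quantity estimated in \eqref{130}. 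Adding these contributions gives
\[
\|\mathcal{U}_{n\gamma}\|^2_{H^1_\gamma(Q^+)}\le C\bigl\{\|\mathcal{F}_\gamma\|^2_{L^2(Q^+)}+\|\mathcal{U}_\gamma\|^2_{H^1_{\tan,\gamma}(Q^+)}\bigr\},
\]
and then the trace inequality above, applied on $Q^+$ to each component of $\mathcal{U}_n$, produces \eqref{interpol1}.

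The main obstacle, and the reason the estimate is stated only for $\mathcal{U}_n$ rather than for all of $\mathcal{U}$, is the degeneracy of the conormal weight $\sigma$ in $Z_1=\sigma\partial_1$, which vanishes on $\omega$: the conormal norm by itself does not control the normal derivative of $\mathcal{U}$ near the boundary, so the trace inequality cannot be applied to the characteristic components $(u_2,u_3,h_2,h_3,S)$, whose boundary traces indeed need not be controllable. What rescues the argument is that only the trace of $\mathcal{U}_n$ is ever needed, and for this part the missing normal derivative is recovered from \eqref{130}, which in turn rests on the special block structure of the boundary matrix (see \eqref{a10}) and on the divergence constraint \eqref{84}. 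Once $\mathcal{U}_{n\gamma}\in H^1_\gamma(Q^+)$ is established with the stated bound, the remaining steps are entirely routine.
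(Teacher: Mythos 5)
Your proof is correct and follows essentially the same route as the paper, whose ``proof'' simply defers to Lemma~13 of \cite{SeTr}: that argument is precisely the tangential-Fourier weighted trace inequality you state, combined for the plasma part with the normal-derivative bound \eqref{130} to upgrade the noncharacteristic component $\mathcal{U}_{n\gamma}$ to full $H^1_\gamma(Q^+)$ regularity before taking traces. No gaps.
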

The proof of Lemma \ref{interpol} is completely analogous to the proof of Lemma~$13$ in~\cite{SeTr}. Substituting \eqref{interpol1}, \eqref{interpol2} in \eqref{H1} and taking $\gamma_0$ large enough yields
\begin{equation}
\gamma\left( \|\mathcal{U}_\gamma\|^2_{H^1_{\tan,\gamma}(Q^+)}
+\|\Ws_\gamma\|^2_{H^1_{\gamma}(Q^-)}\right)
 \leq \frac{C}{\gamma}\|\mathcal{F}_\gamma\|^2_{H^1_{\tan,\gamma}(Q^+)}
, \quad \gamma\ge \gamma_0,
\label{H11}
\end{equation}
where $C$ is independent of $\gamma$. Finally, from
\eqref{gradphi}, \eqref{interpol1} and \eqref{H11} we get
\begin{equation}
\begin{array}{ll}\label{gradphi2}
\ds \gamma \left( \|\mathcal{U}_{n\gamma}|_\omega\|^2_{H^{1/2}_{\gamma}(\omega)}+ \|\Ws_{\gamma}|_\omega\|^2_{H^{1/2}_{\gamma}(\omega)} \right)
+ \gamma^2 \|\varphi\|^2_{H^{1}_{\gamma}(\omega)}
 \leq \frac{C}{\gamma}\|\mathcal{F}_\gamma\|^2_{H^1_{\tan,\gamma}(Q^+)}
.
\end{array}
\end{equation}
Adding \eqref{H11}, \eqref{gradphi2} gives \eqref{54}; the proof of Theorem \ref{main} is complete.




\appendix

\section{Notation}\label{App:Notation}

In this Appendix, we collect the notation used in the paper, for the ease of reference.

\bigskip

In this paper, we deal with scalar and vector functions in the form~$f=f(t,x)$, where~$t\in[0,T]$ or~$t\in(-\infty,T]$, and $x=(x_1,x_2,x_3)\in \R^3$, or~$f=f(t,x')$, where~$x'=(x_2,x_3)\in\R^2$. By~$\p_j f$ we denote its partial derivative with respect to~$t$ if~$j=0$, or with respect to~$x_j$, if~$j=1,2,3$. We also put~$\p_t=\p_0$. If~$f$ is a scalar function, then we use notation~$f_j$ to denote~$f_j=\p_j f$, whereas if~$f$ is a vector function, we use notation~$f_k$ to denote the~$k$-th component of~$f$ (see Notation~\ref{not:derivatives}).

In the plasma region~$\Omega^+(t)$, we denote by~$U=(q,v,H,S)\tm$ the $8$-th dimensional vector function obtained by the total pressure~$q$, the $3$-dimensional plasma velocity~$v$, the $3$-dimensional magnetic field in plasma region~$H$ and the entropy~$S$. In the vacuum region~$\Omega^-(t)$, we denote by~$\mathcal{W}=(\Hc,\Ec)\tm$ the~$6$-th dimensional vector function obtained by the $3$-dimensional vacuum magnetic field~$\mathcal{H}$, and the $3$-dimensional vacuum electric field~$\mathcal{E}$. We assume that the interface between plasma and vacuum is given by a hypersurface~$\Gamma(t)$ which can be described by the equation $x_1=\varphi(t,x')$. Hence we may write
\[ \Omega^\pm(t)\doteq  \{ x\in\R^3: \ x_1 \gtrless \varphi(t,x') \}\,, \qquad \Gamma(t)\doteq  \{ (x_1,x') \in\R^3: \ x_1=\varphi(t,x') \}\,. \]
We write the systems for~$U$ and~$\Wc$ (see~\eqref{4} and~\eqref{eq:MaxwellB}) as:
\[ A_0(U )\partial_tU+\sum_{j=1}^3A_j(U )\partial_jU=0, \quad \text{in~$\Omega^+(t)$,} \qquad \mep \p_t \Wc + \sum_{j=1}^3 B_j\p_j \Wc = 0, \quad \text{in~$\Omega^-(t)$.} \]
We notice that (see~\eqref{eq:Bmatrix}):
\begin{gather*}B_j \doteq  \begin{pmatrix}
O_3 & B'_j \\
{B'_j}\tm & O_3
\end{pmatrix} ,\ j=1,2,3, \\
\quad B'_1\doteq \begin{pmatrix}
0 & 0 & 0 \\
0 & 0 & -1 \\
0 & 1 & 0
\end{pmatrix}, \
B'_2\doteq \begin{pmatrix}
0 & 0 & 1 \\
0 & 0 & 0 \\
-1 & 0 & 0
\end{pmatrix}, \
B'_3\doteq \begin{pmatrix}
0 & -1 & 0 \\
1 & 0 & 0 \\
0 & 0 & 0
\end{pmatrix}.\end{gather*}
After the change of coordinates introduced in Lemma~\ref{lemma3} by
\[ (t,\Phi(t,x)) = (t, x_1+\Psi(t,x){,x'})\,,\]
and dropping the tilde for convenience, we derive the new set of variables in the vacuum region (see Proposition~\ref{prop:Maxwell})
\begin{align*}
\hg & =( \Hc_1-\Psi_2 \Hc_2 -\Psi_3 \Hc_3,(1+\Psi_1) \Hc_2,(1+\Psi_1) \Hc_3)\tm, \\
\eg & =( \Ec_1-\Psi_2 \Ec_2-\Psi_3 \Ec_3,(1+\Psi_1)\Ec_2,(1+\Psi_1)\Ec_3)\tm,\\
\Hg & =((1+\Psi_1) \Hc_1, \Hc_2+\Psi_2 \Hc_1, \Hc_3+\Psi_3 \Hc_1)\tm,\\
\Eg & =((1+\Psi_1) \Ec_1, \Ec_2+\Psi_2 \Ec_1, \Ec_3+\Psi_3 \Ec_1)\tm,\\
\Hs & =((1+\Psi_1) \Hc_1, \Hc_2+\Psi_2 \Hc_1+\mep\Psi_t \Ec_3, \Hc_3+\Psi_3 \Hc_1-\mep\Psi_t \Ec_2)\tm\\
    & =\Hg +{\mep\Psi_t (0, \Ec_3,-\Ec_2)\tm,}\\
\Es & =((1+\Psi_1) \Ec_1, \Ec_2+\Psi_2 \Ec_1-\mep\Psi_t \Hc_3, \Ec_3+\Psi_3 \Ec_1+\mep\Psi_t \Hc_2)\tm\\
    & =\Eg+{\mep\Psi_t(0,-\Hc_3,\Hc_2)\tm.}
\end{align*}
We also denote
\[ \wg=(\hg,\eg)\tm, \qquad \Wg=(\Hg,\Eg)\tm, \qquad \Ws=(\Hs,\Es)\tm\,.\]
In the plasma region, we similarly define
\[ h = (H_1-\Psi_2 H_2 -\Psi_3 H_3, (1+\Psi_1) H_2,(1+\Psi_1) H_3)\tm\,. \]
We notice that
\[ \hg_1=\Hc_N \doteq  \Hc \cdot N\,, \qquad \eg_1=\Ec_N \doteq  \Ec\cdot N \,, \qquad h_1=H_N\doteq  H\cdot N\,, \]
where~$N\doteq  (1,-\Psi_2,-\Psi_3)\tm$. We also define~$v_N \doteq  v\cdot N$.
\\
The plasma and vacuum regions are now given by~$\Omega^\pm=\{x\in\R^3, \ x_1\gtrless 0\}$, whereas the boundary is given by~$\Gamma=\{ (0,x'), \ x'\in\R^2 \}$. We also define~$Q^\pm \doteq  (-\infty,T] \times \Omega^\pm$ and~$\omega_T\doteq  (-\infty,T]\times \Gamma$ (see Section~\ref{s2.2}).

\bigskip

To describe the relations between $\Wc, \wg, \Wg, \Ws$, we use the matrices
\[ \eta \doteq  \begin{pmatrix}
1 & - \Psi_2 & - \Psi_3 \\
0 & 1+\Psi_1 & 0 \\
0 & 0 & 1+\Psi_1
\end{pmatrix} 
\,, \qquad K = \begin{pmatrix}
\eta & 0 \\
0 & \eta
\end{pmatrix}\,, \]
\begin{align*} J & \doteq  \begin{pmatrix}
(1+\Psi_1)(\eta\tm)^{-1} & -\mep\Psi_t B_1'  \\
\mep\Psi_t B_1' & (1+\Psi_1)(\eta\tm)^{-1}
\end{pmatrix} \\
& \equiv \begin{pmatrix}
1+\Psi_1 & 0 & 0 & 0 & 0 & 0 \\
\Psi_2 & 1 & 0 & 0 & 0 & \mep\Psi_t \\
\Psi_3 & 0 & 1 & 0 & -\mep\Psi_t & 0 \\
0 & 0 & 0 & 1+\Psi_1 & 0 & 0  \\
0 & 0 & -\mep\Psi_t & \Psi_2 & 1 & 0  \\
0 & \mep\Psi_t & 0 & \Psi_3 & 0 & 1
\end{pmatrix} \,,\end{align*}
\[ L\doteq  \frac1{1+\Psi_1}\,J\,K\tm \, ,\] 
%
which give us (see~\eqref{eq:K} and~\eqref{eq:J})
\[ \wg=K\Wc\,, \qquad \Wg=\frac1{1+\Psi_1}\,K\tm \Wc\,, \qquad \Ws = J \,\Wc\,, \qquad \Ws=L\Wg\,.\]
We also define (see~\eqref{eq:J1})
\[ J_1 = (1+\Psi_1)(1-{\mep^2}\Psi_t^2)J^{-1} \,, \]
which is given by
{\tiny\[ J_1 \doteq  \begin{pmatrix}
1-{\mep^2}\Psi_t^2 & 0 & 0 & 0 & 0 & 0\\
-\Psi_2 & 1+\Psi_1 & 0 & \mep\,\Psi_3\,\Psi_t & 0 & -\mep\,(1+\Psi_1) \,\Psi_t\\
-\Psi_3 & 0 & 1+\Psi_1 & -\mep\,\Psi_2\,\Psi_t & \mep\,(1+\Psi_1) \,\Psi_t & 0\\
0 & 0 & 0 & 1-{\mep^2}\Psi_t^2 & 0 & 0\\
-\mep\,\Psi_3\,\Psi_t & 0 & \mep\,(1+\Psi_1) \,\Psi_t & -\Psi_2 & 1+\Psi_1 & 0\\
\mep\,\Psi_2\,\Psi_t & -\mep\,(1+\Psi_1) \,\Psi_t & 0 & -\Psi_3 & 0 & 1+\Psi_1
\end{pmatrix} \,.\]}
%
%
System~\eqref{eq:Maxwell1} may be written by means of variables~$\wg$ and~$\Wg$ (see Subsection \ref{subsecfixed}):
\[\begin{cases}
\mep\,\p_t \hg - \mep\,\frac{\Psi_t}{1+\Psi_1}\, \p_1\hg + \rot \Eg + \mep\,R'\hg = 0\,,\\
\mep\,\p_t \eg - \mep\,\frac{\Psi_t}{1+\Psi_1}\, \p_1\eg - \rot \Hg + \mep\,R'\eg = 0\,,
\end{cases}
\]
where
\[ R'= \begin{pmatrix}
0 & \p_2(\Psi_t/(1+\Psi_1)) & \p_3(\Psi_t/(1+\Psi_1)) \\
0	& -\p_1(\Psi_t/(1+\Psi_1)) & 0 \\
0 & 0 & -\p_1(\Psi_t/(1+\Psi_1))
\end{pmatrix}\,. \]

\bigskip

After we linearize the problem (see Section~\ref{sec:linearization}), we use a ``hat" to distinguish the variables corresponding to the \emph{basic state}, in particular $\hat U, \hat \Wc, \hat\varphi$ and the related quantities, like~$\hat\wg, \hat\Wg, \hat\Ws, \hat h, \hat N, \hat v_N, \hat\eta$ (see Section~\ref{s2.2}). On the other hand, in the construction of the composite quantities without the ``hat", like~$\wg,\Wg, \Ws, h, N, v_N$, the \emph{basic state} only appears in~$\hat\Psi$ (see Notation~\ref{not:hat}). We set
\begin{align*}
\hat\mu  \doteq  (\hat\Ec_1-\mep\,\hat v_3\hat\Hc_2+\mep\,\hat v_2\hat\Hc_3)_{|x_1=0}\,.
\end{align*}

In~\eqref{eq:B0}, we define
\[ B_0= KJ^{-1}\,, \]
the matrix which allow us to write the system for~$\Wc$ in the form
\[ {\mep\, B_0\p_t \Ws} + \sum_{{j=1}}^3 B_j\p_j \Ws {+\mep\,B_4\Ws}=0\,. \]
%

\section{Proofs of technical computations}\label{computations}

\begin{proof}[Proof of Lemma~\ref{lemma3}]
We define
\begin{equation}
\label{def-f1}
\Psi(t,x_1,x') \doteq \chi (x_1\langle D\rangle) \, \varphi(t,x') \, ,
\end{equation}
where $\chi\in C^\infty_0(\R)$ satisfies $\chi=1$ on $[-1,1]$, $0\leq\chi\leq1$, and $\chi(x_1 \langle D\rangle)$ is the pseudo-differential operator with $\langle D\rangle=(1+|D|^2)^{1/2}$ being the Fourier multiplier in the
variable $x'$. The proof of Lemma~\ref{lemma3} follows as in Lemmas 1, 2, 3 in~\cite{SeTr}, exception given for~\eqref{eq:Phitcontrol}, which is a new estimate that we will use in the following. To prove this latter, it is sufficient to notice that $\p_t^j\Phi(t,x)=\p_t^j\Psi(t,x) (1,0,0)\tm$ and to estimate
\[ \|\p_t^j\Psi(t,x_1,\cdot)\|_{L^\infty(\R^2)} \leq \|\langle D\rangle^{-\frac32}\chi (x_1\langle D\rangle) \|_{L^2(\R^2)} \, \|\p_t^j\varphi(t,\cdot)\|_{H^{\frac32}(\R^2)} \]
for any~$t\in[0,T]$ and for any~$x_1\in\R$. Indeed, by Plancherel's formula, being~$0\leq\chi\leq1$, the estimate
\begin{equation}\label{eq:estchi}\begin{split}
\|\langle D\rangle^{-\frac32}\chi (x_1\langle D\rangle) \|_{L^2(\R^2)} =& \frac1{2\pi} \|\langle \xi\rangle^{-\frac32}\chi(x_1\langle\xi\rangle)\|_{L^2(\R^2)} \\
\leq&  \frac1{2\pi} \|\langle \xi\rangle^{-\frac32}\|_{L^2(\R^2)} = \frac1{\sqrt{2\pi}}
\end{split}\end{equation}
concludes the proof of~\eqref{eq:Phitcontrol}.
\end{proof}
The following remark shows how the diffeomorphism in Lemma~\ref{lemma3} influences our differential system.
\begin{remark}\label{rem:derivatives}
Let~$f$ be a scalar function and let us denote by~$(t,\Phi(t,x))$ the change of coordinates $(t,x)\mapsto(t,\Phi)$. Since
\[ (f \circ (t,\Phi))_j = f_j \circ (t,\Phi) + \Psi_j \cdot f_1 \circ (t,\Phi) \qquad j=0,1,2,3, \]
if we put $\tilde{f}=f(t,\Phi)$, then it holds:
\begin{align*}
f_1 \circ (t,\Phi)
	& = \frac1{1+\Psi_1}\, \tilde{f}_1 \,,\\
f_j \circ (t,\Phi)
	& = \tilde{f}_j - \Psi_j \cdot f_1 \circ (t,\Phi) = \tilde{f}_j - \frac{\Psi_j}{1+\Psi_1} \, \tilde{f}_1\,, \qquad j\neq 1\,.
\end{align*}
\end{remark}
We may now prove Proposition~\ref{prop:Maxwell}.
\begin{proof}[Proof of Proposition~\ref{prop:Maxwell}]
Thanks to Remark~\ref{rem:derivatives},
for the second component of $\mep\p_t\Hc+\rot \Ec=0$ in~$\Omega^-(t)$ we have:
\begin{align*}
	&(1+\Psi_1)(\mep\p_t\Hc_2+\p_3\Ec_1-\p_1 \Ec_3) \circ (t,\Phi (t,x))\\
	& \qquad = \mep(1+\Psi_1)\p_t\tilde \Hc_2 -\mep\Psi_t \p_1\tilde\Hc_2 + (1+\Psi_1)\p_3\tilde\Ec_1 - \Psi_3\p_1\tilde\Ec_1 - \p_1\tilde\Ec_3\\
	& \qquad = \mep\p_t ((1+\Psi_1)\tilde\Hc_2) -\mep\p_1 (\Psi_t \tilde\Hc_2) + \p_3((1+\Psi_1)\tilde\Ec_1) - \p_1(\Psi_3\tilde\Ec_1) - \p_1\tilde\Ec_3 \\
	& \qquad = \mep\p_t \hg_2 + \p_3 \Es_1 -\p_1\Es_3\,,
\end{align*}
and similarly for the third one, and for the second and third components of $\mep\p_t\Ec-\rot \Hc=0$ in~$\Omega^-(t)$. To deal with the first component of $\mep\p_t\Hc+\rot \Ec=0$ in~$\Omega^-(t)$ we need more computation. First we notice that
\begin{align*}
& (1+\Psi_1)(\p_2 \Ec_3 - \p_3\Ec_2) \circ (t,\Phi (t,x)) \\
	& \qquad = (1+\Psi_1)\p_2\tilde\Ec_3 - \Psi_2\p_1\tilde\Ec_3 - (1+\Psi_1)\p_3\tilde\Ec_2 + \Psi_3\p_1\tilde\Ec_2   \\
	& \qquad  = \p_2 \Es_3 - \p_3 \Es_2 -\mep \p_2 (\Psi_t\tilde\Hc_2) -\mep \p_3 (\Psi_t\tilde\Hc_3) + \nabla\Psi \cdot (\rot \tilde\Ec)\, .
\end{align*}
We may use \eqref{eq:Maxwell} in~$\Omega^-(t)$ to derive
\[ \nabla\Psi \cdot (\rot \tilde\Ec) = \nabla\Psi \cdot \bigl( (\rot \Ec) \circ (t,\Phi) \bigr) = - \mep \nabla\Psi \cdot \bigl( (\p_t\Hc) \circ (t,\Phi) \bigr) \,,\] 
where the first equality follows from straightforward calculations. Therefore, replacing
\begin{align*}
	& (1+\Psi_1)(\p_t\Hc_1) \circ (t,\Phi) -\nabla\Psi \cdot \bigl((\p_t\Hc) \circ (t,\Phi)\bigr)\\
	& \qquad = \p_t\tilde\Hc_1 -\Psi_2 \p_t\tilde\Hc_2 -\Psi_3 \p_t\tilde\Hc_3 -\frac{\Psi_t}{1+\Psi_1} (\p_1\tilde\Hc_1-\Psi_2\p_1\tilde\Hc_2-\Psi_3\p_3\tilde\Hc_3) \\
	& \qquad = \p_t\hg_1 -\frac{\Psi_t}{1+\Psi_1} \p_1 \hg_1 + \Psi_{2t}\tilde\Hc_2+\Psi_{3t}\tilde\Hc_3 -\frac{\Psi_t}{1+\Psi_1} (\Psi_{12}\tilde\Hc_2+\Psi_{13}\tilde\Hc_3)\,,
\end{align*}
we obtain
\begin{align*}
& (1+\Psi_1)(\mep\p_t\Hc_1+\p_2 \Ec_3 - \p_3\Ec_2) \circ (t,\Phi (t,x)) \\
	& \qquad = \mep\p_t \hg_1 + \p_2 \Es_3 - \p_3 \Es_2 - \mep\,\frac{\Psi_t}{1+\Psi_1} \,\dv\hg .\end{align*}
We proceed similarly for the second equation of \eqref{eq:Maxwell} and this completes the proof of \eqref{eq:Maxwell1}.
Thanks again to Remark~\ref{rem:derivatives}, we obtain:
\begin{align*}
& (1+\Psi_1)(\dv \Hc)\circ (t,\Phi (t,x))\\
	& \qquad = \p_1 \, \tilde\Hc_1 +  (1+\Psi_1) \p_2\, \tilde\Hc_2 - \Psi_2\, \p_1 \tilde\Hc_2 + (1+\Psi_1) \p_3\, \tilde\Hc_3 - \Psi_3\, \p_1 \tilde\Hc_3 \\
	& \qquad = \p_1 \, (\tilde\Hc_1 - \Psi_2\, \tilde\Hc_2 - \Psi_3\, \tilde\Hc_3) + \p_2\, ( (1+\Psi_1)\tilde\Hc_2)  + \p_3\, ((1+\Psi_1)  \tilde\Hc_3)\,,
\end{align*}
which proves the equivalence of~$\dv\Hc=0$ in~$\Omega^-(t)$ and~$\dv\hg=0$ in the fixed domain. The equivalence of~$\dv\Ec=0$ in~$\Omega^-(t)$ and~$\dv\eg=0$ in the fixed domain is identical.
\end{proof}
\begin{proof}[Proof of Proposition~\ref{p1}]

The proof of the part concerning $\dv h, H_N$ can be found in \cite{trakhinin09arma}. Let us consider the vacuum variables. Taking the divergence of the first equation in \eqref{eq:Maxwell1} yields
\begin{equation*}
\begin{array}{ll}\label{}
\p_t \dv\hg- \p_1\left(\dfrac{\Psi_t}{1+\Psi_1}\dv\hg \right)=0.
\end{array}
\end{equation*}
Multiplying by $\dv\hg$ and integrating by parts gives
\begin{equation*}
\begin{array}{ll}\label{}
\dfrac{d\ }{dt}\int_{x_1<0}| \dv\hg|^2dx- \int_{x_1=0} \varphi_t|\dv\hg|^2dx'
-\int_{x_1<0}\p_1\left(\frac{\Psi_t}{1+\Psi_1} \right)| \dv\hg|^2dx=0.
\end{array}
\end{equation*}
Using the first of the equations in \eqref{eq:div2} we obtain
\begin{equation*}
\begin{array}{ll}\label{}
\dfrac{d\ }{dt}\int_{x_1<0}| \dv\hg|^2dx\leq C\int_{x_1<0}| \dv\hg|^2dx.
\end{array}
\end{equation*}
Then the Gronwall lemma and the assumption on the initial data yields $\dv\hg=0$ on $[0,T]\times \Omega^-$.
The proof of $\dv\eg=0$ is similar.
Considering now the first component of the first equation of {\eqref{eq:Maxwell1}} evaluated at $\Gamma$,
\begin{equation*}
\begin{array}{ll}\label{}
\mep \p_t \hg_1 + \p_2 \Es_3 - \p_3 \Es_2 - {\mep\varphi_t} \,\dv\hg=0,
\end{array}
\end{equation*}
the two last boundary conditions in {\eqref{17}}, namely $\Es_2=\Es_3=0$, and the result just obtained $\dv\hg=0$, gives $\hg_1(t,\cdot)=\hg_1(0,\cdot)=\Hc_N(0,\cdot)=0$ on $[0,T]\times\Gamma$.
\end{proof}

\section{On the number of boundary conditions for problem \eqref{16'}}\label{calcolosegni}
The correct number of boundary conditions that should be imposed to \eqref{16'} for well-posedness is given by the number of incoming characteristics, i.e. the number of negative eigenvalues of the boundary matrix of \eqref{16'}. Evaluated at $\Gamma$ (where $\Psi=\varphi,\Psi_1=0$) it reads
\begin{equation}\label{eq:B1tilde} \tilde{B}_1(\Psi)_{|x_1=0} =
- \begin{pmatrix}
\mep\varphi_t & 0 & 0 & 0 & -\varphi_3 & \varphi_2 \\
0 & \mep\varphi_t & 0 & \varphi_3 & 0 & 1 \\
0 & 0 & \mep\varphi_t & -\varphi_2 & -1 & 0 \\
0 & \varphi_3 & -\varphi_2 & \mep\varphi_t & 0 & 0  \\
-\varphi_3 & 0 & -1&0 & \mep\varphi_t & 0  \\
\varphi_2 & 1& 0 & 0 & 0 & \mep\varphi_t
\end{pmatrix} \,.
\end{equation}
This matrix has eigenvalues
\begin{equation*}
\begin{array}{ll}\label{}
\lambda_{1,2}=-\mep\varphi_t +\sqrt{1+\varphi_2^2+\varphi_3^2}\ , \qquad
\lambda_{3,4}=-\mep\varphi_t\ ,
\end{array}
\end{equation*}
\begin{equation*}
\begin{array}{ll}\label{}
\lambda_{5,6}=-\mep\varphi_t-\sqrt{1+\varphi_2^2+\varphi_3^2}\ .
\end{array}
\end{equation*}
In agreement with our choice of measure units with the speed of light in vacuum set to be~$1$, we may assume $\mep\,|\varphi_t|<1$. Thus we have $\lambda_{1,2}>0$ and $\lambda_{5,6}<0$. If $\varphi_t<0$ (the plasma expands into vacuum) $\lambda_{3,4}>0$ and so the total number of negative eigenvalues of $\tilde{B}_1(\Psi)_{|x_1=0}$ is two. Considering that the boundary matrix $\tilde{A}_1(U,\Psi)_{|x_1=0}$ has one negative and one positive eigenvalue, see Proposition \ref{nrbc}, and that one more boundary condition needs to be imposed for the determination of the front $\varphi$, the correct number of boundary conditions for the resolution of \eqref{16}, \eqref{16'} is four, as in \eqref{17}.

If $\varphi_t>0$ (the vacuum expands into plasma) $\lambda_{3,4}<0$ and so the total number of negative eigenvalues of $\tilde{B}_1(\Psi)_{|x_1=0}$ is four. Then the correct number of boundary conditions for the resolution of \eqref{16}, \eqref{16'} is six, as in \eqref{17}, \eqref{eq:div2}.

Differently, after the introduction of the new variables $\Ws$, the negative eigenvalues of the boundary matrix $B_1$ of \eqref{16'.1} are always two, see Proposition \ref{nrbc}, so that the correct number of boundary conditions for the resolution of \eqref{16.1}, \eqref{16'.1} is four, as in \eqref{17.1}.

\section*{Acknowledgments} We would like to warmly thank Yuri Trakhinin for pointing out a mistake in the first version of the paper, for many fruitful discussions and helpful suggestions
\footnote{After completing our paper we heard of the paper by Mandrik--Trakhinin \cite{mandrik-trakhinin} about the same problem.}.


\medskip
\medskip

\end{document}